\documentclass[12pt]{amsart}

\usepackage[a4paper,margin=2.5cm]{geometry}
\usepackage{amsmath, amssymb, latexsym, amsfonts,amsthm, xcolor, url, mathtools}
\usepackage{multirow}
\usepackage{enumitem}
\usepackage{adjustbox}
\usepackage{makecell}
\usepackage{changepage}

\usepackage{mathtools}

\usepackage{tikz}
\usetikzlibrary{calc}

\usepackage[backref = page]{hyperref}
\hypersetup{
    colorlinks=true,
    linkcolor=blue,
    urlcolor =blue, 
    citecolor = magenta}

\renewcommand*{\backrefalt}[4]{%
\ifcase #1 %
{\color{red} No citations.}%
\or
(p.~#2).%
\else
(pp.~#2).%
\fi
}

\usepackage[normalem]{ulem}

\setlist[enumerate]{label={\rm(\roman*)}}

\usepackage[nameinlink, capitalize, noabbrev]{cleveref}
\crefname{type}{type}{types}

\setlength{\parsep}{5pt}
   
\parskip=8pt 
\parindent=0pt

\usepackage{float}

\theoremstyle{plain}
\newtheorem{Thm}{Theorem}[section]
\newtheorem*{Prob}{Asymptotics Problem}
\newcounter{problemcounter}
\crefformat{problem}{#2Asymptotics Problem#3}
\crefformat{proplabel}{#2Proposition \ref*{galoisprop}#1#3}

\newtheorem*{Thm*}{Theorem}
\newtheorem{Cor}[Thm]{Corollary}
\newtheorem{Prop}[Thm]{Proposition}
\newtheorem{Lem}[Thm]{Lemma}

\theoremstyle{definition}

\newtheorem{Ex}[Thm]{Example}

\newtheorem{remarkx}[Thm]{Remark}
\newenvironment{rk}
  {\pushQED{\qed}\begin{remarkx}}
  {\popQED\end{remarkx}}

\usepackage{listings}
\lstset{escapeinside={<@}{@>}}
\lstdefinestyle{mystyle}{
	basicstyle=\ttfamily\footnotesize,
	breakatwhitespace=false,         
	breaklines=true,                 
	keepspaces=true,                 
	numbers=none,                    
	showspaces=false,                
	showstringspaces=false,
	showtabs=false,                  
	tabsize=2
}
\lstset{style=mystyle}
\lstdefinelanguage{gp}{
basicstyle=\color{gray},
  keywords={sqrt, log, Euler, idealprimedec, sum, forprime, abs, poldegree, factor, Mod, apply, poldisc},
   keywordstyle=\color{blue},
  comment=[l]{\\},
  commentstyle=\color{green!50!black}\ttfamily,
    basicstyle=\ttfamily\footnotesize,
    numbers=left,
    numberstyle=\scriptsize
}
\lstdefinelanguage{gp2}{
basicstyle=\color{gray},
  keywords={sqrt, log, Euler, idealprimedec, sum, forprime, abs, poldegree, factor, Mod, apply, poldisc},
   keywordstyle=\color{blue},
  comment=[l]{\\},
  commentstyle=\color{green!50!black}\ttfamily,
    basicstyle=\ttfamily\scriptsize,
    numbers=left,
    numberstyle=\scriptsize
}
\lstdefinelanguage{gpin}{ 
    basicstyle=\ttfamily\footnotesize,
  comment=[l]{\\},
  commentstyle=\color{green!50!black}\ttfamily,
    belowskip=-5pt
}
\lstdefinelanguage{gpout}{
  basicstyle = \color{blue}\ttfamily\footnotesize,
}
\lstdefinelanguage{gpoutmsg}{
  basicstyle = \color{black}\ttfamily\footnotesize,
}

\usepackage{accsupp}

\newcommand{\Gal}{\mathrm{Gal}}

\newcommand{\Frob}{\mathrm{Frob}}

\DeclareMathOperator{\charpoly}{{\rm CharPoly}}
\DeclareMathOperator{\Ind}{{\rm Ind}}

\newcommand{\Q}{\mathbb{Q}}
\newcommand{\lam}{\lambda}
\newcommand{\CC}{\mathbb C}
\newcommand{\QQ}{\Q}
\newcommand{\Z}{\mathbb{Z}}
\newcommand{\ZZ}{\Z}
\newcommand{\p}{\mathfrak{p}}
\newcommand{\pp}{\p}
\newcommand{\OO}{\mathcal O}

\renewcommand{\SS}{{\mathfrak S}}

\usepackage{bbm}
\newcommand{\one}{\mathbbm{1}}
\newcommand{\sgn}{\mathrm{sgn}}
\newcommand{\std}{\mathrm{Std}}
\newcommand{\twod}{\mathrm{2D}}

\newcommand{\CCC}{\mathcal{C}}

\newcommand{\rhobar}{\overline{\rho}}
\newcommand{\FF}{\mathbb F}
\newcommand{\Vbar}{\overline{V}}
\DeclareMathOperator{\GL}{{\rm GL}}
\DeclareMathOperator{\PGL}{{\rm PGL}}
\DeclareMathOperator{\tr}{{\rm tr}}

\DeclareMathOperator{\Res}{Res}
\DeclareMathOperator{\Reg}{Reg}
\newcommand{\NR}{{\rm NR}}
\newcommand{\QR}{{\rm QR}}

\newcommand{\pmodi}[1]{%
  \mathchoice%
    {\mkern4mu(\mathrm{mod} \, #1)}
    {\mkern4mu(\mathrm{mod} \, #1)}
    {\mkern4mu(#1)}
    {\mkern4mu(#1)}
}

\def\+{{\mkern2mu+\mkern2mu}}
\def\-{{\mkern2mu-\mkern2mu}}

\begin{document}
\title[Euler-Kronecker constants beyond Dirichlet $L$-series]{Euler-Kronecker constants of modular forms: \\ beyond
Dirichlet $L$-series}

\author[Charlton]{Steven Charlton}
\address{Max Planck Institute for Mathematics, Vivatsgasse 7, 53111 Bonn, Germany}
\email{charlton@mpim-bonn.mpg.de}
\email{moree@mpim-bonn.mpg.de}

\author[Medvedovsky]{Anna Medvedovsky}
    \address{Department of Mathematics, University of Arizona, 617 N.~Santa Rita Ave, Tucson, AZ 85721, USA}
    \email{medved@arizona.edu}

\author[Moree]{Pieter Moree}

\date{}

\begin{abstract}
\noindent The Euler-Kronecker constants related to congruences of Fourier coefficients of modular forms that have been computed so far, involve logarithmic derivatives of Dirichlet $L$-series as most complicated functions (to the best of our knowledge). However, generically the more complicated Artin $L$-series will make their appearance. Here we work out some simple examples involving an Artin $L$-series related to an $\mathfrak{S}_3$, respectively $\mathfrak{S}_4$ extension. These examples are related to a mod-2 congruence for $X_0(11)$, respectively a mod-59 congruence for $\Delta E_4$ conjectured by Serre and Swinnerton-Dyer and proved by Haberland. The latter example solves a problem posed by Ciolan, Languasco and the third author in 2023.
\end{abstract}

\maketitle

{
\parskip=0pt           
    \setcounter{tocdepth}{1}
	\tableofcontents
    \setcounter{tocdepth}{2}
}

\section{Introduction}
An arithmetic function $a:\mathbb N\rightarrow \mathbb C$ is said to be \emph{multiplicative} if $a(mn)=a(m)a(n)$ for every pair of coprime integers $m$ and $n$.
A set $S$ of positive integers is said to be \emph{multiplicative} if for every pair $(m,n)$ of coprime 
positive integers we have
$mn\in S$ if and only if  $m,n \in S$.  (In other words, $S$ is a multiplicative set if
and only if the indicator  function $i_S$ of $S$ is multiplicative.) 
An enormous supply of multiplicative sets is provided 
by taking 
\begin{equation}
\label{Sdef}
S:=S_{a, \ell}:=\{n\ge 1:\ell\nmid a(n)\},
\end{equation}
where $a$ is any integer-valued multiplicative function and 
$\ell$ a prime. 
It is natural to ask for the asymptotic behavior of $S(x)$,  
the number of positive integers $n\le x$ that are in~$S$.
An important role in answering this 
question is played by the 
\emph{Dirichlet generating series} (which converges for $\Re(s) > 1$) 
\begin{equation}
\label{LS}
F(s) \coloneqq F_S(s) \coloneqq \sum_{n\in S}\frac1{n^{s}}=\sum_{n=1}^{\infty}
\frac{i_S(n)}{n^s},
\end{equation}
where $i_S$ is the \emph{characteristic} 
(\emph{indicator}) function of $S$.
One of the basic facts about a formal Dirichlet series $G(s):=\sum_{n=1}^{\infty}g(n)n^{-s}$ with $g$ a multiplicative arithmetic function, is that it has an \emph{Euler product}:
$$G(s) = \prod_{p\ {\rm prime}} G_p(s) = \prod_{p\ {\rm prime}}\;\; \sum_{k\ge 0} \frac{g(p^k)}{p^{ks}}.$$
In particular, if $S$ is a multiplicative set, then
\begin{equation}
\label{eulerpro}    
F(s) = \prod_{p \text{~prime}} F_p(s), \qquad \text{where}
\qquad
F_p(s) = \sum_{\substack{k \geq 0 \\ p^k \in S}} \frac{1}{p^{ks}} = \sum_{k \geq 0} \frac{i_S(p^k)}{p^{ks}}. 
\end{equation}
In the sequel the mathematical symbol $p$ will be exclusively used to denote primes.

The shorthand $F'/F(s)$ stands for $F'(s)/F(s)$.
If the limit
\begin{equation}
\label{EKf} 
\gamma_S:=\lim_{s\rightarrow 1^+}\bigg(
\frac{F'}{F}(s) 
+\frac{\alpha}{s-1}\bigg)
\end{equation}
exists for some $\alpha>0$, we say that the set $S$ admits an \emph{Euler-Kronecker constant} $\gamma_S$. In case
$S=\mathbb N,$ we have 
$F(s)=\zeta(s)$, the 
Riemann zeta function, $\alpha=1$ and $\gamma_S=\gamma$, the \emph{Euler-Mascheroni 
constant}.
The following result shows
that the Euler-Kronecker constant~$\gamma_S$ determines the second order behavior of $S(x).$ As usual $\Gamma$ denotes the Gamma function.
\begin{Thm*}[Standard]
\label{thm:multiplicativeset}
Let $S$ be a multiplicative set.
If there exist real numbers $\rho>0$ and $0<\delta<1$ such that
\begin{equation}
\label{primecondition}
\sum_{p\le x,~p\in S}1=\delta~\sum_{p\le x}1+O_S\bigg(\frac{x}{\log^{2+\rho}x}\bigg),
\end{equation}
then $\gamma_S\in\mathbb R$ exists and 
\begin{equation}
\label{initstarrie}
S(x)=\sum_{n\le x,~n\in S}1=\frac{C_S\,x}{\log^{1-\delta}x}\bigg(1+\frac{(1-\gamma_S)(1-\delta)}{\log x}(1+o_S(1))\bigg)
\end{equation}
as $x\to\infty,$ where 
\begin{equation}
\label{residu}    
C_S=\frac{1}{\Gamma(\delta)}\lim_{s\rightarrow 1+}(s-1)^{\delta}F(s)>0.
\end{equation}
If  the prime numbers belonging to $S$ are, with finitely many exceptions, precisely
those in a finite union of arithmetic progressions, 
we have, for arbitrary $j\ge 1,$
\begin{equation}
\label{starrie}
S(x)=\frac{C_S\,x}{\log^{1-\delta}x}\left(1+\frac{c_1}{\log x}+\frac{c_2}{\log^2 x}+\dots+
\frac{c_j}{\log^j x}+O_{j,S}\left(\frac{1}{\log^{j+1}x}\right)\right),
\end{equation}
with $C_S,c_1,\ldots,c_j$ constants, 
$C_S$ as in {\eqref{residu}} and $c_1=(1-\gamma_S)(1-\delta)$.  
\end{Thm*}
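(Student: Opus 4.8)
The plan is to isolate the expected singularity of $F$ at $s=1$ by factoring $F(s)=\zeta(s)^{\delta}G(s)$, to read off the existence of $\gamma_S$ and the constants $C_S,c_1$ from the boundary behaviour of $G$ near $s=1$, and then to feed this into a Tauberian theorem. Note first that $i_S$ is $\{0,1\}$-valued, so $F(s)=\sum_n i_S(n)n^{-s}$ has nonnegative coefficients and, by \eqref{eulerpro}, is a nonvanishing Euler product for $\Re(s)>1$. Put $G(s)\coloneqq F(s)\zeta(s)^{-\delta}$ with the principal branch near $s=1$. Comparing Euler factors via $\log F_p(s)=i_S(p)p^{-s}+O(p^{-2\Re(s)})$ and $-\log(1-p^{-s})=p^{-s}+O(p^{-2\Re(s)})$ gives, for $\Re(s)>1$,
\[
\log G(s)=\sum_{p}\bigl(i_S(p)-\delta\bigr)p^{-s}+R(s),
\]
where $R$ converges absolutely, hence is holomorphic, for $\Re(s)>\tfrac12$. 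Writing $A(x)\coloneqq\sum_{p\le x,\,p\in S}1-\delta\,\pi(x)$, hypothesis \eqref{primecondition} reads $A(x)\ll x(\log x)^{-2-\rho}$, and partial summation turns the Dirichlet polynomial above into $s\int_2^\infty A(x)x^{-s-1}\,dx$. Differentiating in $s$ costs one power of $\log x$ in the integrand, and since $A(x)(\log x)x^{-s-1}\ll x^{-\Re(s)}(\log x)^{-1-\rho}$, the margin $2+\rho>2$ is exactly what makes both this integral and its $s$-derivative converge absolutely and uniformly for $\Re(s)\ge 1$. Hence $\sum_p(i_S(p)-\delta)p^{-s}$, and therefore $\log G$ and $G$, extend continuously to $\{\Re(s)\ge1\}$ and continuously differentiably in a neighbourhood of $s=1$, with $\log G(1)=\sum_p(i_S(p)-\delta)/p+R(1)\in\mathbb{R}$, so $G(1)>0$; in particular $G(1)\ne0$. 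This regularity step, together with the use of the exponent $2+\rho$, is the main technical point of the first half.

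Next, from $\zeta(s)=(s-1)^{-1}+\gamma+O(s-1)$ one gets $(\zeta'/\zeta)(s)=-(s-1)^{-1}+\gamma+O(s-1)$, whence
\[
\frac{F'}{F}(s)=\delta\,\frac{\zeta'}{\zeta}(s)+\frac{G'}{G}(s)=-\frac{\delta}{s-1}+\Bigl(\delta\gamma+\frac{G'}{G}(1)\Bigr)+o(1)\qquad(s\to1^+).
\]
Thus the limit in \eqref{EKf} exists with $\alpha=\delta>0$ and $\gamma_S=\delta\gamma+(G'/G)(1)\in\mathbb{R}$, while $\lim_{s\to1^+}(s-1)^{\delta}F(s)=G(1)$ gives \eqref{residu} with $C_S=G(1)/\Gamma(\delta)>0$. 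Setting $\Psi(s)\coloneqq(s-1)^{\delta}F(s)=\bigl((s-1)\zeta(s)\bigr)^{\delta}G(s)$ we have $\Psi(1)=G(1)=C_S\Gamma(\delta)$, and since $\tfrac{d}{ds}\log\bigl((s-1)\zeta(s)\bigr)\big|_{s=1}=\gamma$ one obtains the key identity $(\Psi'/\Psi)(1)=\delta\gamma+(G'/G)(1)=\gamma_S$; this is what forces the second-order coefficient to come out as $(1-\gamma_S)(1-\delta)$.

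For \eqref{initstarrie} I would now invoke a Tauberian theorem of Delange type: for a Dirichlet series $\sum a_n n^{-s}$ with $a_n\ge0$, convergent for $\Re(s)>1$ and such that $\Psi(s)=(s-1)^{\delta}F(s)$ extends continuously to $\Re(s)=1$ and is $C^1$ near $s=1$, such a theorem yields, with $h(s)\coloneqq\Psi(s)/s=\sum_{k\ge0}\beta_k(s-1)^k$,
\[
S(x)=x(\log x)^{\delta-1}\Bigl(\tfrac{\beta_0}{\Gamma(\delta)}+\tfrac{\beta_1}{\Gamma(\delta-1)}\cdot\tfrac1{\log x}+o\bigl(\tfrac1{\log x}\bigr)\Bigr).
\]
Here $\beta_0=\Psi(1)=C_S\Gamma(\delta)$ and $\beta_1=\Psi'(1)-\Psi(1)$, so $\beta_1/\beta_0=(\Psi'/\Psi)(1)-1=\gamma_S-1$; using $\Gamma(\delta)=(\delta-1)\Gamma(\delta-1)$, the bracket becomes $C_S\bigl(1+\tfrac{(\delta-1)(\gamma_S-1)}{\log x}+o(1/\log x)\bigr)$, which is precisely \eqref{initstarrie}.

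Finally, suppose the primes of $S$ are, up to a finite set, exactly those lying in a finite union of arithmetic progressions. By orthogonality of Dirichlet characters, for large $p$ the quantity $i_S(p)-\delta$ equals a finite linear combination $\sum_{\chi\neq\chi_0}c_\chi\chi(p)$, so $\sum_p(i_S(p)-\delta)p^{-s}=\sum_{\chi\neq\chi_0}c_\chi\log L(s,\chi)+(\text{holomorphic for }\Re(s)>\tfrac12)$, and hence $G(s)$ equals $\prod_{\chi\neq\chi_0}L(s,\chi)^{c_\chi}$ times a function holomorphic and nonvanishing near $\{\Re(s)\ge1\}$. Since the nonprincipal $L(s,\chi)$ are entire, nonvanishing on $\Re(s)\ge1$, and have the classical zero-free region, the Selberg--Delange method --- Perron's formula, deformation to a truncated Hankel contour around $s=1$, and expansion of $h(s)=\Psi(s)/s$ --- produces an asymptotic expansion of $S(x)$ to every order, the $k$-th term being $\beta_k\,x(\log x)^{\delta-1-k}/\Gamma(\delta-k)$; the $k=0,1$ terms reproduce $C_S$ and $c_1=(1-\gamma_S)(1-\delta)$ exactly as above, and the higher $c_j$ are the corresponding combinations of the $\beta_k$. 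Beyond the boundary-regularity point already flagged, the remaining obstacle is the usual one here: controlling the error in the truncated Perron formula / Hankel contour by means of the zero-free region (and dealing with a possible exceptional real zero of a quadratic character).
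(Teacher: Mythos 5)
The paper gives no proof of this ``Standard'' theorem --- it is justified only by citations (Moree \cite[Theorem 4]{Mpreprint} for \eqref{initstarrie}, Serre \cite[Th\'eor\`eme 2.8]{serre} for \eqref{starrie}) --- and your sketch reconstructs essentially the same standard route those sources take: factor $F(s)=\zeta(s)^{\delta}G(s)$, use the $2+\rho$ margin to get continuity of $G$ and $G'$ up to $\Re(s)=1$, identify $\gamma_S=(\Psi'/\Psi)(1)$ and $C_S=G(1)/\Gamma(\delta)$, feed this into a two-term Delange-type Tauberian theorem, and run Selberg--Delange with Dirichlet $L$-functions in the arithmetic-progression case; your bookkeeping of constants (in particular $c_1=(1-\gamma_S)(1-\delta)$ via $\Gamma(\delta)=(\delta-1)\Gamma(\delta-1)$) is correct. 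The one ingredient you invoke as a black box --- the two-term Tauberian theorem under boundary regularity --- is precisely the content of the cited Moree result, so your proposal sits at the same level of detail as the paper's own treatment; note only that such a theorem needs the $C^1$ extension along the whole line $\Re(s)=1$ (not merely near $s=1$), which your partial-summation bound does in fact deliver, and that in the last step the fixed finite set of characters guarantees a zero-free neighbourhood of $s=1$, so the possible exceptional zero you flag is not an obstacle for fixed $S$.
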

There are many results in this spirit in the literature. For \eqref{initstarrie}, see Moree \cite[Theorem 4]{Mpreprint}; for the remaining assertions the reader can consult, for example, Serre \cite[Th\'eor\`eme~2.8]{serre}.

In order for the standard theorem  to apply to a set $S$ of the form \eqref{Sdef}, 
it is necessary that the set of primes $p$ 
for which
$q$ divides $a(p)$ 
has a positive natural density $\delta$. If this set of primes is well-behaved enough analytically (cf.\,the assumption \eqref{primecondition}), then the $c_j$ will be related to the coefficients of the Laurent expansion of $F(s)$ around $s=1$ (see Serre \cite[Th\'eor\`eme 2.8]{serre}). We will exclusively focus on $C_S$ and $c_1$, which 
are related to $F(s)$ by \eqref{residu}, respectively \eqref{EKf}.

\noindent There are many Dirichlet series having an associated Euler-Kronecker constant, we refer to Moree~\cite{India} for a number of examples.
Here we exclusively focus on the case where $S$ is of the form~\eqref{Sdef} and the $a(n)$ are integer valued Fourier coefficients of a modular form, where we require~$f$ to be multiplicative. 
The first to 
be interested in estimating
$S(x)$ was Ramanujan who took
$\ell\in \{3,5,7,23,691\}$ and $a(n)=\tau(n)$, with $\tau$ the \emph{Ramanujan tau function}. 
He did so
in his ``unpublished manuscript" \cite{bono} (published only decades after he passed away). It addresses many topics, however the  material relevant for us is summarized in Rankin \cite{Rankin76}. Recall that
\begin{equation}
    \label{productexpansion}
q\prod_{n=1}^{\infty}(1-{q}^n)^{24}=\sum_{n=1}^{\infty}\tau(n){q}^n,\quad q:=e^{2\pi i z}
\end{equation}
the associated cusp form being
$\Delta=\frac{1}{1728}
(E_4^3-E_6^2),$
with
$E_4$ and $E_6$ being the 
\emph{normalized Eisenstein series}
$$E_4=1+240\sum_{n\ge 1}\sigma_3(n)q^n,\quad E_6=1-504\sum_{n\ge 1}\sigma_5(n)q^n.$$
We use the notation $$\sigma_r(n):=\sum_{d\mid n}d^r,$$ to denote the \emph{sum of divisors function}.
For each of Ramanujan's primes $\ell$ there is a congruence involving some sum of divisors function, 
allowing us to replace the potentially mysterious Fourier coefficients in \eqref{Sdef} by 
some simple arithmetic function.
The primes $p$ with $\ell\nmid \sigma_r(p)$ and $p\nmid \ell$, are precisely all the the primes in a finite union of arithmetic progressions modulo $\ell$, and hence $S(x)$ satisfies \eqref{starrie}.
Ramanujan \cite{bono}
claimed that $S(x)$ satisfies
\begin{equation}
\label{boldclaim}    
S(x)=C_S\int_2^x 
\frac{dt}{\log^{1-\delta}t}+O_{\epsilon}(x^{1/2+\epsilon}),{\rm ~with~}\epsilon>0{\rm~arbitrary},
\end{equation}
and indicated explicit values of $C_S$ and $\delta$, that 
were later proven to be correct 
by Rankin (cf.\,\cite[Sec.\,5]{CLM}). 
Partial integration of \eqref{boldclaim} yields
$$S(x)=C_S\frac{x}{\log^{1-\delta}t}\left(1+
\frac{1-\delta}{\log x}+O\left(\frac{1}{\log^2 x}\right)\right),$$
and thus $c_1=1-\delta$. On the other hand, 
by \eqref{starrie} we have 
$c_1=(1-\gamma_S)(1-\delta)$. Thus Ramanujan's conjecture implies $\gamma_S=0$.
However, Moree \cite{MRama} showed that $\gamma_S\ne 0$ in each of Ramanujan's 
cases. In some of these cases though the integral in \eqref{boldclaim} asymptotically better approximates $S(x)$ than $C_S\,x\log^{\delta-1}x$ (note that this is guaranteed if $\gamma_S<1/2$ and hence $(1-\gamma_S)(1-\delta)$ is closer to $1-\delta$ than to $0$).
All of the above is recounted in greater detail in Moree and Cazaran \cite{MC} and again much more recently in Berndt and Moree \cite{bemo}.

In the hands of Deligne and Serre, Ramanujan's work has found vast generalization, of which
we will only consider a rather specific case. 

Recall that a set 
$S$ of rational primes is called \emph{frobenian} 
if there exists a finite Galois extension $L/\mathbb Q$ and a union of conjugacy classes $\mathcal C$ of 
$\Gal(L/\mathbb Q)$, such that the symmetric difference between 
$S$ and the set $\{p\text{~prime}:p\text{~is~unramified~in~}L/\mathbb Q\text
{~and~}\Frob_p\in \mathcal C\}$ is finite. We will also call such a set \emph{$L$-frobenian}. By the Chebotarev density theorem a frobenian set $S$ has a rational density $\delta_S\in [0,1]$. 

\begin{Thm*}[Deligne, Serre \cite{Deligne,DeligneSerre}]
Given a cuspidal eigenform  $f(z)=\sum_{n=1}^{\infty}a_n(f)\,q^n$ with integral coefficients and a prime $\ell$, the set
$$S=S_{f,\ell}=\{n:\ell\nmid a_n(f)\}$$
is frobenian. The asymptotic behavior of $S(x)$  is given  by \eqref{initstarrie}, with $\delta=\delta_{S}$, where ~$\delta_{S}$ is the density of $S$.\footnote{In fact, this statement is true if~$f$ is a general modular form, not necessarily an eigenform (already known to Serre in the 70s, or see \cite[\S 11.1]{Bim}). On the other hand, the set $S_{f, \ell}$ is multiplicative if and only if $f$ is a normalized eigenform \cite[Prop.~5.8.5]{DS}.}
\end{Thm*}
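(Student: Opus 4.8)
The plan is to reduce the entire statement to the mod-$\ell$ Galois representation attached to $f$ and then to invoke the Standard Theorem. The one deep ingredient I would quote is: if $f$ has level $N$ and weight $k$, then by Deligne (for $k\ge 2$) and Deligne--Serre (for $k=1$) there is a continuous semisimple representation
\[
\rhobar_{f,\ell}\colon \Gal(\overline{\Q}/\Q)\longrightarrow \GL_2(\overline{\FF}_\ell),
\]
unramified outside $\ell N$, with $\tr\rhobar_{f,\ell}(\Frob_p)\equiv a_p(f)\pmod{\ell}$ for every prime $p\nmid\ell N$ (there is also a formula for the determinant, which we will not need). Since $f$ has rational integer coefficients, $a_p(f)\bmod\ell$ is simply an element of $\FF_\ell\subseteq\overline{\FF}_\ell$ and no choice of a prime above $\ell$ is involved. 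The image $G:=\rhobar_{f,\ell}(\Gal(\overline{\Q}/\Q))$ is finite, so $\rhobar_{f,\ell}$ factors through $\Gal(L/\Q)$ for $L$ the subfield of $\overline{\Q}$ fixed by $\ker\rhobar_{f,\ell}$; this $L/\Q$ is a finite Galois extension unramified outside $\ell N$.

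Granting this input, the frobenian structure of $S$ is formal. Set $\mathcal{C}:=\{g\in G:\tr g\neq 0\}$; since the trace is a class function, $\mathcal{C}$ is a union of conjugacy classes of $G\cong\Gal(L/\Q)$. For $p\nmid\ell N$ the chain
\[
p\in S\iff \ell\nmid a_p(f)\iff \tr\rhobar_{f,\ell}(\Frob_p)\neq 0\iff \Frob_p\in\mathcal{C}
\]
shows that the set of primes in $S$ differs from $\{p:\ p\text{ unramified in }L/\Q,\ \Frob_p\in\mathcal{C}\}$ only within the finite set of primes dividing $\ell N$. Hence $S$ is $L$-frobenian, and by the Chebotarev density theorem the primes of $S$ have density $\delta_S=|\mathcal{C}|/|G|\in[0,1]\cap\Q$; this is the $\delta$ of \eqref{primecondition}, and it is what ``the density of $S$'' means in the statement (the integer set $S$ itself has natural density $0$ once $\delta_S<1$).

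For the asymptotics of $S(x)$ I would then check the two hypotheses of the Standard Theorem. First, $S$ is a multiplicative set in the sense of \eqref{Sdef}: taking $f$ normalized, $n\mapsto a_n(f)$ is multiplicative, so $a_{mn}(f)=a_m(f)a_n(f)$ for coprime $m,n$, and therefore $mn\in S$ if and only if $m\in S$ and $n\in S$. Second, the primes of $S$ satisfy \eqref{primecondition}: the unconditional effective Chebotarev density theorem gives $\sum_{p\le x,\ p\in S}1=\delta_S\,\pi(x)+O\!\big(x\exp(-c\sqrt{\log x})\big)$, and this error is $O_S\!\big(x/\log^{2+\rho}x\big)$ for every $\rho>0$. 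In the generic range $0<\delta_S<1$ the Standard Theorem then applies with $\delta=\delta_S$ and yields \eqref{initstarrie}; moreover, if $\rhobar_{f,\ell}$ has abelian image (equivalently, by Kronecker--Weber, $L$ lies in a cyclotomic field) the primes of $S$ form a finite union of arithmetic progressions, and one even obtains the full expansion \eqref{starrie}.

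The hard part is entirely the quoted input: constructing $\rhobar_{f,\ell}$ with the stated Frobenius--trace relation is precisely Deligne's theorem --- via the $\ell$-adic \'etale cohomology of Kuga--Sato varieties over modular curves --- together with the separate Deligne--Serre argument in weight $1$, and there is nothing to reprove here. Everything downstream is bookkeeping, and the only points that deserve attention are: absorbing the finitely many primes dividing $\ell N$ into the symmetric-difference tolerance in the definition of ``frobenian'' and into the finitely-many-exceptions clause for multiplicativity; observing that integrality of the $a_n(f)$ makes ``$\ell\nmid a_p(f)$'' literally ``$\tr\rhobar_{f,\ell}(\Frob_p)\neq 0$ in $\FF_\ell$''; and noting the degenerate cases $\delta_S\in\{0,1\}$, which are either trivial or are excluded by the hypotheses of the Standard Theorem.
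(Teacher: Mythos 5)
The paper does not prove this theorem; it is stated as a result attributed to Deligne and Deligne--Serre with a citation, and the surrounding material simply uses it as input. Your reconstruction is the standard derivation and is correct: reduce to the mod-$\ell$ semisimple Galois representation $\rhobar_{f,\ell}$ with $\tr\rhobar_{f,\ell}(\Frob_p)\equiv a_p(f)\pmod\ell$ for $p\nmid\ell N$ (Deligne for $k\ge 2$, Deligne--Serre for $k=1$), take $L$ to be the field cut out by $\ker\rhobar_{f,\ell}$ and $\mathcal C$ the nonzero-trace classes, conclude $L$-frobenianness with density $|\mathcal C|/|G|$ by Chebotarev, and then feed the multiplicativity of $n\mapsto a_n(f)$ (for $f$ normalized) together with the effective Chebotarev error bound $O(x\exp(-c\sqrt{\log x}))\subset O_S(x/\log^{2+\rho}x)$ into the Standard Theorem. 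You are also right to flag the two minor points the paper glosses over: the Standard Theorem's hypotheses demand $0<\delta_S<1$, so the degenerate densities must be set aside, and multiplicativity of $S_{f,\ell}$ (hence applicability of the Standard Theorem in the form quoted) genuinely requires the normalization, consistent with the paper's own footnote. This matches the mechanism the paper itself invokes repeatedly in its explicit examples and in \cref{mainthm}, so there is nothing to reconcile.
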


Inspired by \eqref{initstarrie} and Ramanujan's claims of the form \eqref{boldclaim}, one can raise the following

\begin{Prob}%
\refstepcounter{problemcounter}
\label[problem]{problem}
 Determine $C_S$ and $\gamma_S$ with 
several certified decimal digits.
\end{Prob}

Ramanujan's congruences suggest that it is worthwhile to consider $S=\{n\ge 1:\ell\nmid \sigma_r(n)\}$, with $\ell$ an arbitrary prime and $r$ an arbitrary positive integer.
Ramanujan wrote down some initial thoughts, see \cite{Rankin76}. Later 
Rankin \cite{Rankin} computed $C_S$ and $\delta$.
Ciolan et al.\,\cite{CLM} 
determined
$\gamma_S$ and thus determined $c_1$ in \eqref{starrie}. They applied their results to compute the associated $\gamma_S$ for congruences involving the generators of the space of cusp forms of weight
$k\in \{12,16,18,20,22,26\}$, 
confirming the earlier results of Moree \cite{MRama} 
for $k=12$. We recall that precisely for these weights the space of cusp forms of the full modular group is of dimension 1. All the 
`elementary' congruences of prime modulus were 
completely classified using $\ell$-adic representations by the efforts of Serre and Swinnerton-Dyer, cf.\,\cite{S-D}, 
and could be dealt with by 
Ciolan et al.\,\cite{CLM}, with one 
exception. This concerns the
congruential restriction (or congruence, for short)
\begin{equation}
\label{Haberlandcong}    
a_p(\Delta_{16})^2\equiv 0,\ p^{15},\ 2p^{15},\ 4p^{15} \pmod{59} \quad (p\ne 59), 
\end{equation}
where $\Delta_{16}=\Delta E_4$ is the unique normalized 
cusp form of weight 16 for the full modular group. In \cref{Haberland59} we fill this gap in the literature and determine the Euler-Kronecker
constant for $S=S_{\Delta_{16},59}=\{n:~59\nmid a_n(\Delta_{16})\}$. 

As a warm-up we deal with a somewhat easier problem first, namely counting odd Fourier 
coefficients  of  $f_E=\eta(z)^2\eta(11z)^2$,  the unique normalized cuspform of weight $2$ and level $\Gamma_0(11)$, corresponding to the modular elliptic curve $X_0(11)$ 
(thus here we consider \mbox{$S=S_{f_E,2}=\{n:2\nmid a_n(f_E)\}$}). 
We do this in 
\cref{sec:parity} after having dealt with the totally standard case
$S=S_{f_E,5}$ in \cref{five}.

In all of the $\gamma_S$ computations discussed in the literature so far, $F_S(s)$, raised to an appropriate power, has Dirichlet $L$-series as most complicated functions appearing in them, cf.~\eqref{basisvergelijking}. It turns out that
this also happens for $S=S_{f_E,5}$.

The novel feature in this paper is the natural appearance of
Dedekind $\zeta$-functions $\zeta_K(s)$ for our main examples $S_{f_E,2}$ and $S_{\Delta_{16},59}$ with $K$ non-abelian (these cannot be expressed in terms
of Dirichlet $L$-series).
These examples are carefully hand-picked so as to both keep the resulting analysis as easy as possible and involve well-studied cusp forms. 
\begin{Thm}
\label{maintheorem}
Given a cusp form $f$ and $\ell$ as in the table below, for 
$$S=S_{f,\ell}=\{\,n:\ell\nmid a_n(f)\,\},$$ we have 
\begin{equation}
\label{starrieexpliciet}
\,\, S(x)=\sum_{\substack{n\in S \\ n\le x}}1=\frac{C_{S}\,x}{\log^{1-\delta_{S}}x}\left(1+\frac{(1-\gamma_{S})
(1-\delta_{S})}{\log x}+\dots+
\frac{c_{j}}{\log^j x}+O_{j}\left(\frac{1}{\log^{j+1}x}\right) \! \right), \!
\end{equation}
with the $c_{j}$ real constants, possibly depending on
both $f$ and $\ell$, and $\delta_{S},C_{S}$ and $\gamma_{S}$ as in the 
corresponding row of the table\footnote{The bracketed digits
are \emph{uncertified} and rely on \texttt{Pari/GP}'s heuristic numerical computation of $L$-functions; see \cref{app:higherprec} for more details}, where the $\gamma_S$ in the second and third row are conditional on the Generalized Riemannn Hypothesis\footnote{More precisely, RH for the Dedekind $\zeta$-functions appearing in
\eqref{eqn:fvArtinfree}, respectively \eqref{eqn:ls3:dedekind}.}.
\begin{table}[ht]
\begin{center}
\footnotesize
\renewcommand{\arraystretch}{1.25}
\begin{tabular}{|c|c|c|c|c|}\hline
$f$ & $\ell$ & $\delta_{S}$ & $C_{S}$ & $\gamma_{S}$ \\ \hline \hline
$f_E = \eta(z)^2\eta(11z)^2$ & $5$ & $3/4$ & $1.116589605788\ldots$ & \multicolumn{1}{l|}{$-0.003392959329\ldots$}\\ \hline 
$f_E = \eta(z)^2\eta(11z)^2$ & $2$ & $1/3$ & \multicolumn{1}{l|}{$0.412561\ldots$} &  \multicolumn{1}{l|}{$-0.1402(531)\ldots$} \\ \hline 
$\Delta_{16} = \Delta(z)E_4(z)$ & $59$ & $5/8$  & \multicolumn{1}{l|}{$0.913313\ldots$} & \multicolumn{1}{l|}{$\phantom{+}0.29(6104)\ldots$} \\ \hline 
\end{tabular}
\end{center}
\label{tab:table2}
\end{table}

The corresponding Dirichlet generating series $F_S(s)=\sum_{n\in S}n^{-s}$ only involve Dedekind $\zeta$-functions and are given 
in \eqref{basisvergelijkingD}, \eqref{eqn:fvArtinfree}, respectively~\eqref{eqn:ls3:dedekind}. 
\end{Thm}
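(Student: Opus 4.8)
The plan is to treat the three rows by a single mechanism, the only row-dependent input being the mod-$\ell$ Galois representation attached to $f$. Fix a row $(f,\ell)$, write $w$ for the weight of $f$, and let $\rhobar=\rhobar_{f,\ell}\colon G_{\QQ}\to\GL_2(\FF)$ be the associated representation, which we may replace by its semisimplification since only the trace function $\sigma\mapsto\tr\rhobar(\sigma)$ will be relevant; let $L$ be its splitting field and $G=\Gal(L/\QQ)$. As $f$ is a normalized eigenform, $S=S_{f,\ell}$ is multiplicative, so by \eqref{eulerpro} it suffices to control the Euler factors $F_p(s)$. For $p\nmid\ell N$ the Hecke relation $a_{p^{k+1}}=a_pa_{p^k}-p^{w-1}a_{p^{k-1}}$ shows that the sequence $(a_{p^k}\bmod\ell)_{k\ge0}$, hence $F_p(s)=\sum_{k\ge0,\ \ell\nmid a_{p^k}}p^{-ks}$, is determined by the pair $(\tr\rhobar(\Frob_p),\det\rhobar(\Frob_p))$, and $\det\rhobar=\chi^{w-1}$ for $\chi$ the mod-$\ell$ cyclotomic character is itself a function on $G$. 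Thus $F_p(s)=R_c(p^{-s})$ for an explicit rational function $R_c$ depending only on the conjugacy class $c$ of $\Frob_p$ in $G$. The relevant images are classical: for row~$1$, $\rhobar_{f_E,5}$ is reducible (because $X_0(11)$ carries a rational $5$-isogeny), so $L$ is abelian; for row~$2$, $\rhobar_{f_E,2}$ has image $\mathfrak S_3\cong\GL_2(\FF_2)$, $L$ being the $2$-division field of $X_0(11)$; for row~$3$, the Serre--Swinnerton-Dyer congruence \eqref{Haberlandcong} (proved by Haberland) says precisely that the projective image of $\rhobar_{\Delta_{16},59}$ is $\mathfrak S_4$.

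The key point is that the coefficient of $p^{-s}$ in $R_c(p^{-s})$ is exactly $i_S(p)=[\ell\nmid a_p]$, which depends only on $\tr\rhobar(\Frob_p)\bmod\ell$ — for row~$3$, only on $a_p^2/p^{w-1}=(\tr^2/\det)(\rhobar(\Frob_p))$, which by \eqref{Haberlandcong} is nonzero precisely when the projective Frobenius is not an involution. So $\sigma\mapsto i_S(\sigma):=[\ell\nmid\tr\rhobar(\sigma)]$ is a $\{0,1\}$-valued class function on $G$ (for row~$3$ it factors through the projective image $\mathfrak S_4$). For a symmetric group the permutation characters $\Ind_{\mathfrak S_\lambda}^{\mathfrak S_n}\mathbf 1$ form a $\ZZ$-basis of the virtual character ring (Young's rule: the Kostka matrix is unitriangular), so every $\QQ$-valued class function on $\mathfrak S_3$ or $\mathfrak S_4$ — and, trivially, on the abelian group of row~$1$ — is a $\QQ$-combination of permutation characters. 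Solving a small linear system gives
\[
  i_S \;=\; \sum_i e_i\,\Ind_{H_i}^{G}\mathbf 1, \qquad e_i\in\QQ,\ \ H_i\le G,
\]
and we set $\delta_S:=\sum_i e_i$ and $K_i:=L^{H_i}$ (for row~$3$, subfields of the degree-$24$ field cut out by the projective representation). Since $\Ind_{H_i}^G\mathbf 1$ is the Artin character of $\zeta_{K_i}$, whose $p$-th Euler factor has $p^{-s}$-coefficient $(\Ind_{H_i}^G\mathbf 1)(\Frob_p)$, the Dirichlet series $H_0(s):=F_S(s)\big/\prod_i\zeta_{K_i}(s)^{e_i}$ has (after discarding the finitely many ramified or bad Euler factors) all Euler factors equal to $1+O(p^{-2s})$; hence $H_0$ extends to a holomorphic, non-vanishing function on $\Re s>1/2$. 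This is exactly the content of the identities \eqref{basisvergelijkingD}, \eqref{eqn:fvArtinfree}, \eqref{eqn:ls3:dedekind}, and it shows that only Dedekind $\zeta$-functions enter $F_S(s)$.

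From this factorization $F_S$ inherits the analytic properties needed for the asymptotic machinery: meromorphic continuation past $\Re s=1$ with a branch/pole of type $(s-1)^{-\delta_S}$ (each $\zeta_{K_i}$ contributing a simple pole) and a standard zero-free region, because the $\zeta_{K_i}$ have these and $H_0$ is holomorphic and non-zero on $\Re s>1/2$. The Standard Theorem — more precisely Serre \cite[Th\'eor\`eme~2.8]{serre}, whose hypotheses on $F_S$ are thereby met (and the elementary arithmetic-progression case suffices for row~$1$) — then yields the expansion \eqref{starrieexpliciet} with $\delta_S=\sum_i e_i$. Comparing Laurent expansions at $s=1$ and using $\zeta_{K_i}'/\zeta_{K_i}(s)=-\frac1{s-1}+\gamma_{K_i}+O(s-1)$ (with $\gamma_{K_i}$ the Euler--Kronecker constant of $K_i$) one gets
\[
  C_S=\frac{H_0(1)}{\Gamma(\delta_S)}\prod_i\bigl(\Res_{s=1}\zeta_{K_i}(s)\bigr)^{e_i},
  \qquad
  \gamma_S=\frac{H_0'(1)}{H_0(1)}+\sum_i e_i\,\gamma_{K_i}.
\]
The table entries then follow by evaluating the right-hand sides: the residues $\Res_{s=1}\zeta_{K_i}$ via the analytic class number formula, the constants $\gamma_{K_i}$ for the (small-degree) fields $K_i$, and $H_0(1),H_0'(1)$ from their geometrically convergent Euler products. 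The digits relying on Pari/GP's heuristic $L$-function routines are flagged as uncertified, and the conditioning of the values of $\gamma_S$ in rows~$2$--$3$ on RH for the relevant $\zeta_{K_i}$ enters precisely at this numerical stage (in evaluating $\gamma_{K_i}$ and controlling the Euler-product tails of $H_0$).

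The genuinely new — and most delicate — ingredient is the pair of facts underlying the second paragraph for rows~$2$ and~$3$: pinning down the image of $\rhobar$, which for row~$3$ rests on the subtle congruence \eqref{Haberlandcong}; and verifying that $i_S$ is a rational combination of \emph{permutation} characters, i.e.\ that no Artin $L$-function beyond Dedekind $\zeta$-functions is forced into $F_S(s)$ — here the symmetric-group structure $\mathfrak S_3,\mathfrak S_4$ (unitriangularity of the Kostka matrix) is essential and is what makes the ``beyond Dirichlet $L$-series'' examples tractable. Everything after that is bookkeeping of the finitely many local factors together with the careful but routine high-precision numerics carried out in the body and \cref{app:higherprec}.
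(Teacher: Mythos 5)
Your overall strategy coincides with the paper's announced three-step plan (Euler product from the mod-$\ell$ representation, re-expression through $L$-series, extraction of $\delta_S$, $C_S$, $\gamma_S$ by taking residues and logarithmic derivatives), and your representation-theoretic justification that only Dedekind $\zeta$-functions occur --- every $\QQ$-valued class function on $\mathfrak S_3$ or $\mathfrak S_4$ is a $\QQ$-combination of Young permutation characters, by unitriangularity of the Kostka matrix --- is a legitimate, slightly more special substitute for the paper's Artin-induction argument (\cref{keyrepprop2} combined with \cref{tracezerolemma}). Note that your choice of subgroups would produce fields different from the paper's (the Klein four group and $\mathfrak A_4$ are not Young subgroups of $\mathfrak S_4$), which changes the shape but not the existence of identities of the type \eqref{eqn:fvArtinfree} and \eqref{eqn:ls3:dedekind}. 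One overstatement: for the abelian row the claim that ``trivially'' every $\QQ$-valued class function is a $\QQ$-combination of permutation characters is false (on a cyclic group of order $4$ such combinations are constant on power-conjugacy classes); what saves row 1 is that the indicator $i_S$ itself has this invariance, which in general is exactly what \cref{tracezerolemma} supplies.

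The genuine gap is that \cref{maintheorem} \emph{is} the table together with the explicit identities \eqref{basisvergelijkingD}, \eqref{eqn:fvArtinfree}, \eqref{eqn:ls3:dedekind}, and your proposal stops where that work begins. You assert that your factorization ``is exactly the content of'' those displayed equations without deriving the exponents and fields (in the paper this is the explicit local-factor tables and the linear system of \cref{sec:f3toartin}), and you never compute the Euler factors at the ramified primes $2$, $11$, $59$, which genuinely enter $C_S$ and $\gamma_S$. More importantly, you dismiss the numerics as ``routine bookkeeping,'' whereas producing provably correct digits is the paper's main difficulty: the Euler--Kronecker constants of the non-abelian fields are only accessible through Ihara's GRH-conditional bounds \eqref{eqn:ihara} --- this is the sole place GRH enters, not, as you state, in controlling the Euler-product tails of $H_0$, which are handled unconditionally via \eqref{trivialtail} and \eqref{eqn:log1m1ontail} --- the residues require the analytic class number formula with regulators certified by \texttt{bnfcertify}, and the bracketed digits rest on heuristic \texttt{Pari/GP} $L$-function evaluations that must be flagged as uncertified. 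Without carrying out, or at least specifying how to certify, these evaluations, the claimed values of $\delta_S$, $C_S$ and $\gamma_S$ --- i.e.\ the actual content of the statement --- are not established.
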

We prove \cref{maintheorem}, our main computational theorem, in the \cref{five,,sec:parity,,Haberland59}, where we deal
with the rows of the table in order of appearance (where the ordering is according to increasing level of difficulty). Preliminaries on
Dedekind $\zeta$-functions and Artin $L$-series are discussed in \cref{sec:prelim}, those for the modular curve $X_0(11)$ 
(relevant for the first two rows) in \cref{sec:modularcurve}.

In dealing with the examples in \cref{maintheorem} we proceed in three steps:
\begin{itemize}[topsep = -3pt]
    \item Determine an Euler product for the generating series $F(s)$.
    \item Rewrite $F(s)$ in terms of \emph{familiar} $L$-series (double entendre intended!).
    \item Use properties of these $L$-series to answer the \cref{problem}.
\end{itemize}
For the familiar $L$-series we do not have to look beyond the $L$-series 
associated to the ``field of determination"\footnote{By the \emph{field of determination} of a modular form $f = \sum a_n(f) q^n$ mod $p$ we mean the smallest number field $K_f$ with the property that for almost all primes $\ell$ the Fourier coefficient $a_\ell(f)$ depends only on the conjugacy class of $\Frob_\ell$ in $\Gal(K_f / \QQ)$. If $f$ is an eigenform, then $K_f$ is the fixed field of the kernel of the semisimple mod-$p$ Galois representation attached by $f$ by a construction of Deligne.} of the modular form. The simpler this $L$-series is, the easier it is to answer the Asymptotics Problem.

In the proof of \cref{maintheorem} we make copious use of Artin L-series in determining 
the relevant $F_S(s)$.
Curiously it turns out that these can 
always be expressed in terms of Dedekind Zeta-functions only. 
It is thus natural to wonder whether there exists 
 a  multiplicative set of the form $S_{f,\ell}$ (that is, a set arising from a modular form $f$ and a prime $\ell$)
with an expansion of the form \eqref{Dedekindproduct} in terms of Artin $L$-series that \emph{cannot} be expressed with Dedekind $\zeta$-functions. Indeed, 
finding such a set $S_{f,\ell}$ was the original motivation for writing this paper. 
Our main theoretical result establishes that this quest will always fail. 

\begin{Thm}
\label{simpleanna}
Given a cuspidal eigenform  $f(z)=\sum_{n=1}^{\infty}a_n(f)\,q^n$ (of some weight and level) and a prime $\ell$, the generating series $F_S(s)$ of
$S=S_{f,\ell}=\{n:\ell\nmid a_n(f)\}$ satisfies
\begin{equation}
\label{Dedekindproduct}
F_S(s)=H(s)\prod_{j=1}^m \zeta_{K_j}(s)^{e_j}, 
\end{equation}
where $K_1,\ldots,K_m$ are number fields,
$e_1,\ldots,e_m$ are rational numbers 
and $H(s)$ is holomorphic and non-zero
and uniformly bounded
in \mbox{$\Re(s)>1-\varepsilon$} for some $\varepsilon>0$.
\end{Thm}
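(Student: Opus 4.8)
The plan is to linearise the Euler product for $F_S(s)$ and thereby reduce the statement to a single group-theoretic assertion about the trace-zero locus of the mod-$\ell$ Galois representation $\rhobar_f$, which is then dispatched by Artin's induction theorem.

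First I would record that, $f$ being a normalised eigenform, $i_S$ is multiplicative, so $F_S(s)=\prod_p F_p(s)$ with $F_p(s)=\sum_{k\ge0}i_S(p^k)p^{-ks}$ and $i_S(p^k)=\one[\ell\nmid a_{p^k}(f)]$. For each fixed $p$ the sequence $(a_{p^k}(f)\bmod\ell)_{k\ge0}$ obeys a linear recursion over a finite ring, hence is eventually periodic, so $F_p(s)=\Phi_p(p^{-s})$ for a rational function $\Phi_p\in\Q(T)$ with $\Phi_p(0)=1$. For $p\nmid\ell N$ the reduction mod $\ell$ of the Hecke recursion reads $\bar a_{p^{k+1}}=t\,\bar a_{p^k}-d\,\bar a_{p^{k-1}}$, with $\bar a_{p^0}=1$, $\bar a_{p^1}=t$, where $t=\tr\rhobar_f(\Frob_p)$ and $d=\det\rhobar_f(\Frob_p)$; since $\rhobar_f$ factors through $\bar G:=\Gal(K_f/\Q)$, the pair $(t,d)$, and hence $\Phi_p=\Phi_g$, depends only on $g:=\Frob_p|_{K_f}\in\bar G$, so only finitely many rational functions occur. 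The key observation is that the coefficient of $T$ in $\Phi_g(T)$ is precisely $i_S(p)=\one[t\ne0]=\one[\tr\rhobar_f(g)\ne0]$. Taking logarithms and isolating this linear term yields
\[
\log F_S(s)=\sum_{p\nmid \ell N}\one_A(\Frob_p)\,p^{-s}+H_1(s),\qquad A:=\{\,g\in\bar G:\tr\rhobar_f(g)\ne0\,\},
\]
where $H_1$ gathers the finitely many exceptional Euler factors $\log\Phi_p(p^{-s})$ ($p\mid\ell N$ or $p\le3$) together with $\sum_p\sum_{k\ge2}([T^k]\log\Phi_g)\,p^{-ks}$. Using only $|i_S(p^k)|\le1$, every zero of $\Phi_g$ satisfies $\tfrac12<|T|<1$, so by rationality (finiteness of the zero set) each exceptional $\Phi_p(p^{-s})$ is holomorphic, non-vanishing and bounded on some half-plane $\Re(s)>1-\varepsilon_0$, while the $k\ge2$ tails over the non-exceptional primes are geometrically dominated and converge absolutely there; hence $H_1$ is holomorphic and bounded on $\Re(s)>1-\varepsilon$ for a suitable $\varepsilon>0$.

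The heart of the proof is then the class function $\one_A$ on $\bar G$. Here I would use that $A$ has a purely \emph{projective} description: writing $\bar G\to\PGL_2$ for the projectivisation of $\rhobar_f$, a short eigenvalue computation shows that $\tr\rhobar_f(g)=0$ iff the image of $g$ in $\PGL_2$ has order exactly $2$ (when $\ell\ne2$), respectively order dividing $2$ (when $\ell=2$). In either case the relevant subset of $\PGL_2$ is cut out by a condition on the order of an element, and so is stable under $x\mapsto x^k$ whenever $\gcd(k,\ord x)=1$; pulling back, $A$ is stable under $g\mapsto g^k$ for $\gcd(k,\ord g)=1$. Hence $\one_A$ is a $\Q$-valued class function fixed by the natural action of $\Gal(\Q(\zeta_{|\bar G|})/\Q)$ on class functions, so by Artin's induction theorem it is a $\Q$-linear combination of permutation characters, say $\one_A=\sum_{j=1}^m e_j\,\Ind_{H_j}^{\bar G}\one$ with $H_j\le\bar G$ and $e_j\in\Q$. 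Since $\sum_p(\Ind_{H_j}^{\bar G}\one)(\Frob_p)\,p^{-s}=\log\zeta_{K_j}(s)+(\text{holomorphic and bounded for }\Re(s)>\tfrac12)$ where $K_j:=K_f^{H_j}$, substituting back gives
\[
\log F_S(s)=\sum_{j=1}^m e_j\log\zeta_{K_j}(s)+\log H(s),
\]
with $\log H$ holomorphic and bounded on $\Re(s)>1-\varepsilon$; exponentiating proves the theorem, with $H=\exp(\log H)$ holomorphic, non-zero and uniformly bounded there.

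I expect the decisive step to be the group-theoretic one: recognising that the trace-zero locus $A$ is governed by a condition on projective orders, which is exactly what makes $\one_A$ Galois-stable and hence a rational combination of permutation characters. This is the only point where the rank being $2$ is genuinely used — for higher-rank Galois representations ``$\tr=0$'' is no longer equivalent to a condition on the projective order, and $\one_A$ would generically cease to be such a combination — so I would expect this to be precisely the obstruction to any higher-dimensional analogue. By contrast the analytic bookkeeping (eventual periodicity of the local sequences, the finitely many exceptional Euler factors, the $k\ge2$ tails) is routine, resting only on the crude bound $|i_S(p^k)|\le1$.
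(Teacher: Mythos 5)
Your proposal is correct and follows essentially the same route as the paper's proof of the general result (\cref{mainthm}): you express $F_S$ through the frobenian class function $\one_{\{\tr\rhobar\neq 0\}}$ attached to the mod-$\ell$ representation, show it is constant on power-conjugacy classes --- your projective-order-$2$ characterization of trace-zero elements is equivalent to the paper's Cayley--Hamilton argument in \cref{tracezerolemma} --- and then invoke the rational form of Artin induction (proved in the paper as \cref{keyrepprop2}) and cancel the $p^{-s}$-coefficients to isolate the bounded non-vanishing factor $H(s)$, exactly as in \cref{artinlmain,dedekindzetamain}. The only slip is the parenthetical claim that every zero of a local factor $\Phi_g$ has modulus $<1$ (zeros of modulus exactly $1$ do occur, e.g.\ for factors like $(1+x)/(1-x^{3})$), but this is harmless since your argument only needs the correct lower bound $|T|>1/2$ together with the finiteness of the zero set.
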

This result applies in particular to the three examples studied in 
\cref{maintheorem}.
Indeed, \eqref{basisvergelijkingD}, \eqref{eqn:fvArtinfree}, respectively \eqref{eqn:ls3:dedekind} are the corresponding explicit versions of \eqref{Dedekindproduct}.

From character theory we know, \emph{a priori}, that $F_{S_{f, \ell}}(s)$ is, up to $H(s)$, a product of $\CC$-powers of Artin $L$-functions, so \cref{simpleanna} is a surprising result.

The proof of \cref{simpleanna} can be
found in \cref{generalsection},
where a more general version is formulated and established (\cref{mainthm}).

A nice first introduction 
to the philosophy of this paper,
capturing well both the zeta function and modular spirit of our article, can be found in Murty \cite{Murtysieving}.
Finally, we point out that Serre 
\cite{serre,serreIHES} 
and, especially, Odoni \cite{Odoni1,Odoni2}
considered the problem of estimating~$S(x)$ for multiplicative Frobenian sets $S$ in much great generality than we do, however, we do so in a much more explicit way.

\subsection*{Acknowledgements}
The authors thank Neil Dummigan, Jan Vonk, 
Gabor Wiese and Wadim Zudilin for helpful email correspondence and 
Robert van der Waall for sending an extensive letter.
Special thanks are due to Alessandro Languasco for 
extensive feedback and checking various of our computations independently.
The bulk of this paper was written whilst all authors were employed by the Max Planck Institute for Mathematics in Bonn. They are grateful for this opportunity and thank the staff for their helpfulness.

\section{Preliminaries}
\label{sec:prelim}
Our paper requires an excursion to the Zeta Zoo.
In \cref{sec:Dedekind,sec:Artin} we 
briefly discuss some of its animals with a focus on their Euler-Kronecker constants
(for more material the
reader can consult 
Murty \cite{RamMurty}, for example). 
\subsection{Dedekind \texorpdfstring{$\zeta$}{zeta}-functions and their Euler-Kronecker constants}
\label{sec:Dedekind}
Let $K$ be a number field, $\mathcal{O}$ its ring of integers and $s$ a complex variable.
Then for Re$(s)>1$ the 
\emph{Dedekind $\zeta$-function} is defined as
\begin{equation}
\label{KEuler}
\zeta_K(s)=\sum_{\mathfrak{a}} \frac{1}{N{\mathfrak{a}}^{s}}
=\prod_{\mathfrak{p}}\frac{1}{1-N{\mathfrak{p}}^{-s}},
\end{equation}
where $\mathfrak{a}$ ranges over 
the non-zero ideals in $\mathcal{O}$, 
$\mathfrak{p}$ ranges over the prime ideals in $\mathcal{O}$, and~$N{\mathfrak{a}}$ denotes the 
\emph{absolute norm}
of $\mathfrak{a}$,
that is the index of $\mathfrak{a}$ in $\mathcal{O}$.
It is known that $\zeta_K(s)$ can be analytically continued to ${\mathbb C} \setminus \{1\}$,
and that it has a simple pole at~$s=1$. 
Further, around $s=1$ we have the Laurent expansion
\begin{equation}
\label{laurent}
\zeta_K(s)=\frac{\alpha_K}{s-1}+c_K+c_1(K)(s-1)+c_2(K)(s-1)^2+\cdots,
\end{equation}
where $\alpha_K$ is a positive real number carrying a lot of arithmetic information
on $K$.
 The ratio $\gamma_K=c_K/\alpha_K$ is called the \emph{Euler-Kronecker constant}.
In particular, 
we have $\gamma_{\mathbb Q}=\gamma$.
An alternative definition of $\gamma_K$ is 
\begin{equation}
\label{define}
\gamma_K=\lim_{
s \to 1^+
}\,\Bigl(\frac{\zeta'_K(s)}{\zeta_K(s)}+\frac{1}{s-1}\Bigr),
\end{equation}
which shows that $\gamma_K$ is the constant part in the Laurent series of the logarithmic derivative
of $\zeta_K(s)$. 
The following expression can be used to obtain a quick rough approximation
of~$\gamma_K$:
\begin{equation}
\label{hickup}
\gamma_K=\lim_{x\rightarrow \infty}\Bigl(\log x - \sum_{N\mathfrak{p}\le x}\frac{\log N\mathfrak{p}}{N\mathfrak{p}-1}\Bigr).
\end{equation}
For a proof see, e.g., Hashimoto et al.\,\cite{HIKW}.
Hashimoto et al.~\cite{HIKW} (cf.\,\,Ihara \cite[pp. \mbox{416--421}]{Ihara})
show that
$$\gamma_K^{\phantom{p}}=\sum_{\rho}\frac1{\rho}-r_K - 1,$$
where the sum is over the zeros of $\zeta_K(s)$ in the critical strip and 
\begin{equation}
\label{rk}
    r_K= \frac{1}{2} \log|d_K| - \Big\{ \frac{r_1}{2} (\gamma + \log 4\pi) + r_2 (\gamma + \log 2 \pi ) \Big\}, \\
\end{equation}
with $d_K$ the absolute discriminant of $K$, $r_1$ and $2r_2$ the number of real, respectively complex places of $K$.

For later computation of the Euler-Kronecker constants of the the multiplicative sets \( S \), we need to determine various \( \gamma_K \) to sufficient accuracy to obtain provably correct decimal digits of the result.  
This seems a surprisingly difficult problem in case $K$ is non-abelian and the best approach we are aware of assumes GRH and is due
to Ihara \cite{Ihara}.

Let \( K \) be a number field, with \( r_1 \) real, \( r_2 \) imaginary places, and discriminant \( d_K \).  Consider the prime counting function, defined in 
\cite[(0.18)]{Ihara}, for \( x > 1 \),
\[
  \Phi_K(x) \coloneqq \frac{1}{x-1} \sum_{
  \substack{
  N\pp^k \leq x  \\
  k = 1, 2, 3, \ldots
  }
  } \Big( \frac{x}{N\pp^k} - 1 \Big) \log N\pp \,.
\]
Note that $\Phi_K(x)$ is roughly equal to the sum in \eqref{hickup}.
As in \cite[ \S1.2]{Ihara}, define
$$
    \ell_K(x) = \frac{r_1}{2} \Big( \log\frac{x+1}{x-1} + \frac{2}{x-1} \log\frac{x+1}{2} \Big) + r_2 \Big( \log\frac{x}{x-1} + \frac{\log x}{x-1} \Big).
$$

Ihara established the following bounds on \( \gamma_K \), under GRH for \( \zeta_K(s) \).
\begin{Prop}[{Ihara, \cite[Proposition 2, p.\,18]{Ihara}}]\label{prop:ihara}
For a number field \( K \), under GRH for \( \zeta_K(s) \), for $x>1$ we have
\begin{equation}\label{eqn:ihara}
\begin{aligned}[c]
    \gamma_K & \leq \frac{\sqrt{x}+1}{\sqrt{x}-1} \big( \log x - \Phi_k(x) + \ell_K(x) \big) + \frac{2\,r_K}{\sqrt{x}-1}  - 1 \\ 
    \gamma_K & \geq \frac{\sqrt{x}-1}{\sqrt{x}+1} \big( \log x - \Phi_k(x) + \ell_K(x) \big) - \frac{2\,r_K}{\sqrt{x}+1}  - 1, 
    \end{aligned}
\end{equation}
with $r_K$ as in \eqref{rk}.
\end{Prop}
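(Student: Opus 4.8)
The plan is to convert $\Phi_K(x)$ into an \emph{exact} closed form involving the nontrivial zeros of $\zeta_K$, and only afterwards invoke GRH. First I would exploit that the weight $(\tfrac xq-1)^{+}:=\max(\tfrac xq-1,0)$ is chosen precisely so that it has the clean Mellin transform $\int_0^\infty (\tfrac xq-1)^{+}q^{s-1}\,dq=\tfrac{x^s}{s(s-1)}$ for $\Re s>1$. Writing $\Lambda_K(q)=\sum_{N\p^k=q}\log N\p$ and using $-\zeta_K'/\zeta_K(s)=\sum_q\Lambda_K(q)q^{-s}$, Mellin inversion gives
\[
(x-1)\,\Phi_K(x)=\sum_{q}\Lambda_K(q)\Bigl(\tfrac xq-1\Bigr)^{+}=\frac{1}{2\pi i}\int_{(c)}\frac{x^{s}}{s(s-1)}\Bigl(-\frac{\zeta_K'}{\zeta_K}(s)\Bigr)\,ds,\qquad c>1.
\]
The factor $\tfrac1{s(s-1)}$ supplies enough decay that the contour may be pushed to $\Re s\to-\infty$ (along vertical lines avoiding zeros, using only standard polynomial bounds on $\zeta_K'/\zeta_K$), picking up all the poles: the double pole at $s=1$ --- from $\tfrac1{s-1}$ and the simple pole of $-\zeta_K'/\zeta_K$, whose Laurent expansion there is $\tfrac1{s-1}-\gamma_K+\cdots$ --- contributes $x(\log x-1-\gamma_K)$; each nontrivial zero $\rho$ of multiplicity $m_\rho$ contributes $-m_\rho x^{\rho}/(\rho(\rho-1))$; and the pole at $s=0$ together with the trivial zeros --- equivalently the archimedean factors $\Gamma_{\mathbb R}(s)^{r_1}\Gamma_{\mathbb C}(s)^{r_2}$ and the conductor $|d_K|^{s/2}$ of the completed $\zeta_K$ --- contributes a term which, summed into closed form, reproduces exactly the data packaged by $\ell_K(x)$ and $r_K$. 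The outcome is the \emph{unconditional} identity
\[
\log x-\Phi_K(x)+\ell_K(x)=\frac{(x+1)(1+\gamma_K)+2r_K}{x-1}+\frac{1}{x-1}\sum_{\rho}\frac{m_\rho\,x^{\rho}}{\rho(\rho-1)}.
\]

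The decisive step is to bound the zero sum under GRH. Then every nontrivial $\rho$ equals $\tfrac12+it_\rho$ with $t_\rho\in\mathbb R$, so $|x^{\rho}|=\sqrt x$ and $\rho(\rho-1)=-(\tfrac14+t_\rho^2)$, whence
\[
\Bigl|\sum_{\rho}\frac{m_\rho\,x^{\rho}}{\rho(\rho-1)}\Bigr|\le\sqrt x\sum_{\rho}\frac{m_\rho}{\tfrac14+t_\rho^2}=2\sqrt x\,(1+\gamma_K+r_K),
\]
the last equality using the Hashimoto--Ihara formula $\sum_\rho m_\rho/\rho=\gamma_K+r_K+1$ recalled in the excerpt together with $\sum_\rho m_\rho/(\tfrac14+t_\rho^2)=2\sum_\rho m_\rho/\rho$, valid under GRH by pairing $\rho$ with $\bar\rho=1-\rho$.

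Finally I would substitute this bound into the identity. With $Y=\log x-\Phi_K(x)+\ell_K(x)$ and $Z=\sum_\rho m_\rho x^{\rho}/(\rho(\rho-1))$ we have $(x-1)Y=(x+1)(1+\gamma_K)+2r_K+Z$ where $|Z|\le 2\sqrt x(1+\gamma_K+r_K)$; using $Z\ge-2\sqrt x(1+\gamma_K+r_K)$, then collecting the $\gamma_K$-terms and simplifying via $(x+1)-2\sqrt x=(\sqrt x-1)^2$ and $x-1=(\sqrt x-1)(\sqrt x+1)$, gives the upper bound for $\gamma_K$ in \eqref{eqn:ihara}; using $Z\le 2\sqrt x(1+\gamma_K+r_K)$ and $(x+1)+2\sqrt x=(\sqrt x+1)^2$ gives the lower bound.

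The main obstacle is the archimedean bookkeeping in the first step: one must check that the residue at $s=0$ and the sum over trivial zeros --- which bring in the functional equation and the explicit shape of the $\Gamma$-factors --- combine with $\log x$ and $\ell_K(x)$ to give precisely $\tfrac{(x+1)(1+\gamma_K)+2r_K}{x-1}$, with no error term. This forces one to sum series like $\sum_{k\ge1}x^{-2k}/(2k(2k+1))$ into logarithms of $\tfrac{x+1}{x-1}$ and the like --- which is, conversely, exactly why $\ell_K(x)$ has the shape it does. It is elementary but delicate, since the whole force of the proposition is that the archimedean contribution is captured \emph{exactly}; everything else (Mellin inversion, contour shift, final algebra) is routine.
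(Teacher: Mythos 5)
The paper does not prove this proposition itself; it is quoted verbatim from Ihara, so there is no in-paper argument to compare against. Your reconstruction follows the same strategy Ihara uses: an explicit formula for $\Phi_K$ built from the kernel $x^s/(s(s-1))$, the residue at $s=1$ producing $x(\log x - 1 - \gamma_K)$, the archimedean and trivial-zero contributions packaged into $\ell_K(x)$ and $r_K$, and then the GRH bound $|x^\rho|=\sqrt x$ together with $\rho(\rho-1)=-(\tfrac14+t_\rho^2)$ and the Hashimoto--Ihara identity $\sum_\rho m_\rho/\rho = 1+\gamma_K+r_K$ to control the zero sum. Your observation that $\sum_\rho m_\rho/(\tfrac14+t_\rho^2) = 2\sum_\rho m_\rho/\rho$ under GRH (via $\bar\rho = 1-\rho$) is exactly the mechanism that makes the bound self-referential in $\gamma_K$ in a tractable way, and your final algebraic rearrangement using $(x\pm 2\sqrt x + 1) = (\sqrt x \pm 1)^2$ and $x-1 = (\sqrt x - 1)(\sqrt x + 1)$ reproduces both inequalities correctly; I checked it.

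The one step you declare but do not carry out --- verifying that the trivial-zero and $s=0$ contributions assemble to exactly $\ell_K(x)$ plus the constant producing $\tfrac{(x+1)(1+\gamma_K)+2r_K}{x-1}$ --- is indeed the load-bearing computation of the whole proposition, since the bounds must be exact rather than up to $O(\cdot)$. One small inaccuracy in your framing: you locate ``$\gamma_K$'' inside the archimedean bookkeeping, but as your own residue calculation shows, $\gamma_K$ enters solely through the double pole at $s=1$; the archimedean part supplies only $\ell_K(x)$ and the constant $2r_K$ (and also converts the $+1$ into the asserted form). You should also note explicitly why the zero sum has a fixed sign structure to pair with: the nontrivial zeros are stable under $s\mapsto\bar s$ because $\zeta_K$ has real Dirichlet coefficients, so pairing $\rho$ with $\bar\rho$ is legitimate and the resulting sum $\sum m_\rho/\rho$ is real and positive. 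With those points tightened up, this is a faithful proof of Ihara's proposition.
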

As the difference between the bounds tends to 0, as \( x \to \infty \), one has a method for computing \( \gamma_K \) to any proven accuracy desired (unfortunately the convergence turns out to be slow).  Note also that Ihara's bounds are not monotonic.

For the convenience of the reader, we give a basic implementation of these bounds in the \texttt{Pari/GP} calculator in \cref{app:iharacode}.  The relevant command that we implement is \mbox{\texttt{iharaBounds(x, nf)}}; usage instructions can be found in \cref{app:usage}.  We also note that \texttt{Pari/GP} can heuristically compute values of \( \gamma_K \), and other special values of derivatives of $L$-functions (including Artin $L$-functions and Dedekind $\zeta$-functions) to much greater precision, in an \emph{uncertified} manner\footnote{See \textbf{Important Caveat} in the documentation on\newline\url{https://pari.math.u-bordeaux.fr/dochtml/html-stable/_L_minusfunctions.html}.}, details of which can be found in \cref{app:higherprec}.  We may often utilize or state the uncertified results, but will always emphasise this by writing \( \overset{?}{=} \) and {\small \( \smash{ \overset{?}{\in}} \)}, where necessary.

\begin{Ex}
    For the field \( E \) with defining polynomial \( f_E(X) = X^4 + X^3 - 7X^2 - 11X + 3 \), Ihara's bounds \eqref{eqn:ihara} for \( x=10^8 \), computed via \texttt{iharaBounds(x, nf)} in \cref{app:iharacode}, give
    \begin{equation}\label{eqn:rk:gammakint}
         \gamma_E \in (0.83356048\ldots, 0.83453128\ldots) \,,
    \end{equation}
    so that \( \gamma_E = 0.83\ldots \), accurate to 2 decimal places.  Via the heuristic routines described in \cref{app:higherprec}, one obtains
    \begin{equation}\label{eqn:rk:gammakintprec}
        \gamma_E \overset{?}{=} 0.8339340133907066887364345\ldots \,,
    \end{equation}
    with significantly higher precision, although \emph{uncertified}.  One notes that the uncertified value in \eqref{eqn:rk:gammakintprec} is indeed contained within Ihara's bounds from \eqref{eqn:rk:gammakint}.
\end{Ex}

\subsection{Artin \texorpdfstring{$L$}{L}-series}
\label{sec:Artin}
We now introduce a key technical tool of our work. 
A standard reference for this material is the book edited by Cassels and Fr\"ohlich \cite{Brighton}; or see Cogdell~\cite{cogdell} for a convenient summary.
Let $K$ be a number field and $$\rho: \Gal(\overline K/K) \to \GL_\CC(V) = \GL_n(\CC)$$
be an \emph{Artin  representation}: a continuous  Galois representation on a finite-dimensional complex vector space. Because of the mismatch between the profinite topology on the Galois group and the complex topology on the vector space, an Artin representation always has finite image, and hence factors through some finite Galois extension $E$ of $K$; here $E$ may be any number field containing the fixed field of $\ker \rho.$\footnote{In particular, the top field $E$ plays no role, which is why it need not appear in the notation.}

Associated to an Artin representation as above we have an \emph{Artin $L$-function} $L_K(\rho, s)$, which is 
defined as the Euler product
$$L_K(\rho, s) := \prod_{\text{$p$ prime}} L_{K, p}(\rho, s) =\prod_{\text{$p$ prime}}\prod_{\pp \mid p} L_{K, \pp}(\rho, s),$$
where in the last product $\pp$ ranges over the prime ideals of $K$ lying over the rational prime~$p$, and the Euler factor at $\pp$ is \begin{equation}
    \label{Artinlocalfactor}
L_{K, \pp}(\rho, s) \coloneqq \frac{1}{\charpoly(\Frob_\pp \mid V^{I_\pp})(N\pp^{-s})} .
\end{equation}
Here $V^{I_\pp}$ is the subspace of $V$ on which an inertia subgroup $I_\pp$ acts trivially, and the characteristic polynomial of an operator $T$ acting on a $\CC$-vector space $W$ is normalized to be 
 $$\charpoly(T \mid W) = \det\big(1 - X (T\mid W)\big) \in \CC[X].$$
Note that the inertia subgroup $I_\pp$ is defined only up to conjugation, so  $\Frob_\pp$ is therefore a conjugacy class of $I_\pp$-cosets. 
However, since $\Frob_\pp$ is well defined on $V^{I_\pp}$  up to conjugacy, the characteristic polynomial of its action on $V^{I_\pp}$ is indeed well defined.

We also need to recall a few properties of Artin-$L$ series, their relation to Dedekind $\zeta$-functions, and how they behave under induction of representations.

\begin{itemize}
    \item {\bf Connection with Dedekind $\zeta$-functions:} For a Galois extension \( E/K \), the trivial representation \( \rho = \one \) of \( \Gal(E/K) \) gives the Dedekind $\zeta$-function of the base-field
\[
    L_K(\one, s) = \zeta_K(s) \,,
\]
\item {\bf Connection with Dirichlet $L$-series:}
If $\chi$ is a Dirichlet character, and $\rho_\chi$ is the corresponding character of $\Gal(\overline\QQ/\QQ)$, then the Artin $L$-series is the same as the Dirichlet $L$-function. 
\[
    L_\QQ(\rho_\chi, s) = L(\chi, s) \,,
\]
\item {\bf Additivity:} The Artin $L$-function of a direct sum of representations is the product of the Artin $L$-functions: 
\[
    L_K(\rho_1 + \rho_2, s) =     L_K(\rho_1, s)    L_K(\rho_2, s) \,,
\]

    \item {\bf Induced representation:} Given a Galois extension \( E/K \), with \( K \subseteq M \subseteq E \), setting \( G = \Gal(E/K) \) and \( H = \Gal(E/M) \subset G \), then for a representation \( \psi \) of \( \Gal(E/M) \), we have,
    \[
        L_M(\psi, s) = L_K( \Ind_H^G \psi, s) \,.
    \] 
\end{itemize}
In particular, for a finite Galois extension $E/K$ with Galois group $G = \Gal(E/K)$ by considering the regular representation of $G$ it follows that  \begin{equation}\label{eq:regularrep}\zeta_E(s) = \prod_{\rho} L_K(\rho, s)^{\dim \rho},
\end{equation}
where~$\rho$ runs over the irreducible representations of $G$.

\section{The modular curve \texorpdfstring{$X_0(11)$}{X\_0(11)}}
\label{sec:modularcurve}

Already Fricke and Klein \cite[\S 10, 431--434]{Klein} considered the coefficients~$a_n$ in the expansion 
\begin{equation}
\label{cndefi}
f_E(z)=q\prod_{n=1}^{\infty}(1-{q}^n)^2(1-{q}^{11n})^2=
\sum_{n=1}^{\infty}a_n{q}^n.
\end{equation}
We have
$$f_E(z)=q - 2 {q}^{2} -  {q}^{3} +  2 {q}^{4} +  
{q}^{5} + 2 {q}^{6} - 
2 {q}^{7} - 2 {q}^{9} - 2 {q}^{10} +  O({q}^{11}).$$
Putting $q=e^{2\pi iz}$, we see that, for $z$ in the upper half plane,
$f_E(e^{2\pi iz})=\eta(z)^2\eta(11z)^2$, with~$\eta$ the Dedekind eta-function. The modular form $\eta(z)^2\eta(11z)^2$ is the unique normalized cuspform of weight $2$ and level $\Gamma_0(11)$, corresponding to the modular elliptic curve $X_0(11)$.
It also corresponds to the modular elliptic curve $X_1(11)$, given by
\begin{equation}
\label{ellipticequation}    
E:~y^2-y=x^3-x^2.
\end{equation}
According to Mazur \cite[p.\,198]{error}: ``This is an elliptic curve that is something of a showcase for number theory in that
it has been extensively studied—much is known about it—and yet it continues to
repay study, for, as with all other elliptic curves, its deeper features have yet to be
understood.'' Gelbart \cite[Examples 2.1, 2.2, 5.1]{Gelbart} and Joyner \cite[pp.\,40--42]{Joyner} also discuss $E$ and its associated $L$-series.

The coefficients $a_n$ bear some connection to the Ramanujan tau-function 
$\tau(n)$. Indeed, comparison of \eqref{cndefi} with
\eqref{productexpansion}
shows that $a_n\equiv \tau(n)\ \pmodi{11}$.
Cowles \cite{Cowles} used 
this and other known facts to compute $a_p$ from known values of $\tau(p)$ for 
primes $p\le 757$. 
Trotter, as an appendix to a paper
of Shimura \cite{shimura}, gave a table of $a_p$ for all primes  $p\le 2000$. These
days it is trivial to generate many of the $a_n$ (see
\url{https://oeis.org/A006571}).

The Fourier coefficients $a_p$ figure prominently in a finite number of reciprocity laws in non-solvable extensions formulated by 
Shimura \cite{shimura}. His paper is seen as a milestone in the early development of non-abelian class field theory.

We focus on congruences for $a_n$, but we note that also a lot of work on Fourier coefficients assuming some prescribed value has been done. For 
our example the Lang and Trotter~\cite{LT} conjecture
implies that there exists a constant 
$c\ge 0$ such that
$$\#\{n\le x:a_n=a\}\sim c\frac{\sqrt{x}}{\log x}.$$
A large number of generalizations and related results is possible, see Serre \cite{serreIHES}.

\subsection{Congruences for the Fourier coefficients of \texorpdfstring{$f_E$}{f\_E}
}
The weight-2 level-$11$ cuspform~$f_E$ with $q$-expansion $f_E = \sum_n a_n q^n$ whose Fourier coefficients are the $a_n$ is Eisenstein modulo $5$. This follows, for example, by Mazur's work \cite{mazur}, which guarantees a cuspform in prime level $N$ and weight $2$ congruent to the Eisenstein series {$$E_{2, N} := \frac{1}{24}\big(E_2(z) - N E_2(Nz)\big)$$}  modulo any prime dividing the numerator of $\frac{N-1}{12}$ combined with the fact that $S_2\big(\Gamma_0(11)\big)$ is spanned by $f_E$. In particular, this implies the mod-$5$
congruence
\begin{equation}
\label{modulo5}    
a_n\equiv \sigma_1\Big(\frac{n}{11^e}\Big)\,\pmod{5}\,, \quad \text{ with } 11^e\mid \mid n.
\end{equation}
See, for example, Chowla \cite{C}. In fact, using the methods of \cite[\S 3.2]{serrecong} one can easily show that $2$ and $5$ are the only ``exceptional" primes for $f$ in the sense of Serre \textit{loc.~cit.}. Using that the
elliptic curve $E$ from \eqref{ellipticequation} satisfies $E[5]\cong \mathbb Z/5\mathbb Z$ \cite[p.\,55]{LT},
it is easy to prove~\eqref{modulo5} if $n$ is a prime \cite[Lemma 9.1]{HIS}.

In the next two sections we consider our \cref{problem} for $f = f_E$, first for $\ell = 5$ and then for $\ell = 2$.

\section{Divisibility of coefficients of \texorpdfstring{$X_0(11)$}{X\_0(11)} by 5}
\label{five}

\subsection{Key computation of \texorpdfstring{\( \gamma_S \)}{gamma\_S} and \texorpdfstring{\( C_S \)}{C\_S}} Set 
$S:=S_{f_E,5}=\{n:5\nmid a_n\}$ and~\mbox{$F(s):=F_S(s)$}.
We are interested in computing
$\gamma_{S}$. 
We will use congruence \eqref{modulo5} to determine~$F(s)$ and subsequently solve
the \cref{problem}.
Clearly~$p$ is in
$S$ if and only if 
\mbox{$p\not\equiv 4 \pmodi{5}$}.
Thus, by the prime number theorem for arithmetic progressions, 
\eqref{primecondition} is satisfied with\linebreak\mbox{$\delta=3/4$}.
The set $S$ is multiplicative and hence
$F(s)$ has an Euler product. For a prime \mbox{$p\equiv 4 \pmodi{5}$} we 
observe that $5\nmid c_{p^{\alpha}}$ if and only if
$\alpha$ is even. This leads to a local factor
$$1+p^{-2s}+p^{-4s}+\cdots =(1-p^{-2s})^{-1}.$$
By a slightly more extended argument we see that for $p\ne 5,11$ the local factor equals
$$\frac{(1-p^{-(\nu_p-1)s})}{(1-p^{-s})(1-p^{-\nu_ps})},$$
where $\nu_p=5$ if $p\equiv 1 \pmodi{5}$ and $\nu_p$ is the multiplicative order of $p$ modulo $5$ otherwise.
We conclude that 
$$F(s)=(1-5^{-s})^{-1} (1-11^{-s})^{-1}{\widetilde F}(s),$$
with 
$$
{\widetilde F}(s)=\prod_{\substack{p\equiv 1 \pmodi{5} \\ p>11}}\frac{1-p^{-4s}}{(1-p^{-s})(1-p^{-5s})}
\prod_{p\equiv \pm 2 \pmodi{5}}\frac{1-p^{-3s}}{(1-p^{-s})(1-p^{-4s})}
\prod_{p\equiv 4 \pmodi{5}}\frac{1}{1-p^{-2s}},$$
where the local factors $(1-5^{-s})^{-1}$ and $(1-11^{-s})^{-1}$
come from the fact that $5\nmid c_{5^{\alpha}\cdot 11^{\beta}}$ for 
arbitrary~$\alpha,\beta\ge 0$.
The plan is now to relate $F(s)$ to Dirichlet $L$-series.
Let $\chi_c$ be the character of~$(\mathbb Z/5\mathbb Z)^*$ that is determined by
$\chi_c({\bar 2})=i$ and $\chi_5$ be the character that is determined by
$\chi_5({\bar 2})=-1$.
By comparison of local factors we
conclude that 
\begin{equation}
\label{basisvergelijking}
F(s)^4=(1-5^{-s})^{-1}\,H(s)\,\zeta(s)^3\frac{L(s,\chi_c)L(s,{\bar \chi_c})}{L(s,\chi_5)},\qquad \text{where}
\end{equation}
$$
H(s) = \prod_{\substack{p\equiv 1 \pmodi{5} \\ p>11}}\left(\frac{1-p^{-4s}}{1-p^{-5s}}\right)^4
\prod_{p\equiv \pm 2 \pmodi{5}}{\frac{(1-p^{-3s})^4}{(1-p^{-2s})^2(1-p^{-4s})^3}}
\prod_{p\equiv 4\pmodi{5}}\frac{1}{(1-p^{-2s})^{2}}.$$
On invoking \eqref{residu} with $\delta=3/4$, we obtain
$$C_{S}=\frac{1}{\Gamma({\frac{3}{4}})}\left(
\frac{5\,H(1)\,L(1,\chi_c)\,L(1,{\bar \chi_c})}{4L(1,\chi_5)}\right)^{1/4}.$$
In the book of Edwards 
\cite[p.\,388]{edwards} the $L$-values above are given (for an excellent discussion of how
to compute these values see \cite[6.5]{edwards}),
resulting in 
$$L(1,\chi_c)\,L(1,{\bar \chi_c})=\frac{2\pi^2}{25},\quad L(1,\chi_5)=\frac{2}{\sqrt{5}}\log\bigg(\frac{1+\sqrt{5}}{2}\bigg).$$
We thus obtain the explicit expression
\begin{equation}
\label{maincoefficient}    
C_{S}={\frac{1}{\Gamma({\frac{3}{4}})}}\left(\frac{H(1)\,\pi^2}{4\sqrt{5}\log(\frac{1+\sqrt{5}}{2})}\right)^{1/4}.
\end{equation}

\begin{rk}\label{rk:est:sumlog1mk}
With the naive bound \( 0 < -\log(1-n^{-k}) < 2n^{-k} \), holding for \( n \geq 2 \) one has, by the integral test, for \( k > 1 \), that the tail of the following series satisfies 
\begin{equation}\label{eqn:log1m1ontail}
    0 > \sum_{n=x+1}^\infty \log\Big(1-\frac{1}{n^{k}}\Big) > -\frac{2}{(k-1) x^{k-1}} \,.
\end{equation}
One can thus estimate the sum \( \sum_{n=2}^\infty \log(1-n^{-k}) \), also when restricting to primes satisfying certain conditions, by explicitly summing the terms \( n \leq x \), and using the  tail \eqref{eqn:log1m1ontail} to bound the possible error.
\end{rk}

From \cref{rk:est:sumlog1mk}, one obtains via interval arithmetic, and summing the first~\( x = 10^8 \) terms, that
\begin{align*}
    \log H(1) &\in (0.42434580\ldots, 0.42434588\ldots) \,,  \\
     \text{ so $H(1)$} &\in (1.52859010\ldots, 1.52859022\ldots) 
  \end{align*}
implying $C_{S}\in (1.11658960\ldots, 1.11658962\ldots)$ and
so
\[
    C_{S} = 1.1165896\ldots \,,
\]
correct to 7 digits.

\indent Logarithmic 
differentation of \eqref{basisvergelijking} in 
combination with \eqref{EKf}  and $\gamma_{\mathbb N}=\gamma$ gives
$$4\gamma_{S}=3\gamma
+2\Re\left(\frac{L'(1,{\chi_c})}{L(1,{\chi_c})}\right)
-\frac{L'(1,\chi_5)}{L(1,\chi_5)}
-\frac{\log 5}{4} -A_{\pm 1}-A_{\pm 2}.$$
where 
$$A_{\pm 1}=4\sum_{p\equiv 4 \pmodi{5}}{\frac{ \log  p}{p^2-1}}-\sum_{\substack{p\equiv 1 \pmodi{5} \\ p>11}}\left({\frac{16\log p}{p^4-1}}-{\frac{20\log p}{p^5-1}}\right)
\text{ and } $$
$$A_{\pm 2}=\sum_{p\equiv \pm 2 \pmodi{5}}\left({\frac{4\log p}{p^2-1}}-{\frac{12\log p}{p^3-1}}
+{\frac{12\log p}{p^4-1}}\right).$$

\begin{rk}\label{rk:est:sumlogpoverpkm1}
The prime sums of the form \( \sum_{\text{$p$ prime}} \frac{\log{p}}{p^k-1} \) typically appearing in this story can be naively estimated
by computing all terms up to some large value $x$ and estimating the tail by
\begin{equation}
\label{trivialtail}    
0 < \sum_{p>x}\frac{\log p}{p^k-1}\le \frac{x}{x^k-1}\Bigl(-0.98+1.017\frac{k}{k-1}\Bigr),\quad k\in \mathbb R_{>1}\text{~and~}x\ge 7481,
\end{equation}
which follows easily on using the estimate $0.98x \le \sum_{p\le x}\log p \le 
1.017x$ for $x \ge 7481$ due to Rosser and Schoenfeld \cite{RS}. 
\end{rk}

{}From \cref{rk:est:sumlogpoverpkm1}, with \( x = 10^8 \), one obtains
\begin{align*}
    A_{\pm1} \in (0.06745302\ldots, 
0.06745310\ldots), \\
    A_{\pm2} \in (0.76345581\ldots, 
0.76345589\ldots).
\end{align*}

\begin{rk}\label{rk:est:sumlogpoverpkm2}
For prime sums of the form $$\sum_{p\equiv a \pmodi{d}}{\frac{ \log  p}{p^r-1}},$$
especially with small $d$, one can do considerably better than by the all-purpose approach from  \cref{rk:est:sumlogpoverpkm1}, 
see \cite{LaMoree} and \cref{rk:arith} below.
This allows one, for example, to compute~$A_{\pm 1}$ and $A_{\pm 2}$ with hunderds of decimals of accuracy.
\end{rk}

Finally one can readily evaluate \( L'/L(1, \chi_c) \) and \( L'/L(1, \chi_5) \) to high accuracy, for example with \cite[Chp.\,9]{BC}, 
\cite[Sec.\,3]{Ale} and
\cite[Sec.\,4]{LR}.  We calculate these values directly with  \texttt{Pari/GP}, giving
\begin{align*}
\frac{L'(1, \chi_5)}{L(1, \chi_5)} &= 0.8276794767155048879910469\ldots \\
\Re \Big( \frac{L'(1, \chi_c)}{L(1, \chi_c)} \Big) &= 0.1578645354481835065490848\ldots \,.
\end{align*}

Assembling these ingredients, we find
\(
    \gamma_S \in (-0.00339297, 
-0.00339293) 
\), hence 
\[
\gamma_S = -0.0033929\ldots \,,
\]
accurate to 7 decimal places.

As $\gamma_{S}$ is non-zero, it
		follows that \eqref{boldclaim} for 
		$S=S_{f_E,5}$ is \emph{false}. However, 
		asymptotically $S(x)$ 
		is better approximated 
		by the integral $C_{S}\int_2^x\log^{-1/4}t\,dt$, than by 
		$C_{S}x\log^{-1/4}x$.

    We conclude this section by discussing various alternative approaches to derive \( \gamma_S \) and~\( C_S \) via previously studied examples, as well as how to express them in terms of other \( \zeta \)-functions, and derive much more accurate values.

    \subsection{Relation to Ramanujan \texorpdfstring{$\tau$}{tau}}
		These expressions for $C_{S}$ and $\gamma_{S}$ can also be derived in a different way. 
		Ramanujan observed that $\tau(n)\equiv n\sigma_1(n) \pmodi{5} $. It follows from \eqref{modulo5} that if $(n,55)=1$, then 
		$5\nmid \tau(n)$ 
		if and only if $5\nmid a_n$. This means that we can analyze the $5$ non-divisibility of 
		$a_n(f_E)$ by making some small variations in the existing analysis of the $5$ non-divisibility of $\tau(n)$ as given in Moree \cite{MRama}. Namely, we have to only modify the local factors at $5$ and $11$. This gives
		$$F(s)=\frac{1}{1-5^{-s}}\cdot \frac{1-11^{-5s}}{1-11^{-4s}}\cdot \sum_{n\in S'}\frac{1}{n^s},\quad\text{where~}
		S':=\{n:5\nmid n\sigma_1(n)\}.$$
		It follows that
		$$C_{S}=\frac{5}{4}\cdot \frac{1-11^{-5}}{1-11^{-4}}\cdot C_{S'},$$
		which together with the expression for $C_{S'}$ given in \cite{MRama} results again 
		in \eqref{maincoefficient}.  
		
		In the notation of Ciolan et al.\,\cite{CLM} the Euler-Kronecker constant associated to $S$ is
		$\gamma'_{1,5}$, with $\gamma'_{1,5}=0.399547\ldots$ (giving a few more accurate decimals than obtained in \cite{MRama}).
		We conclude again that 
		\[
		\gamma_{S}=-\frac{\log 5}4+\frac{5\log 11}{11^5-1}-
		\frac{4\log 11}{11^4-1}+\gamma'_{1,5}=-0.00339\ldots. \qedhere 
		\]
		
	\subsection{Evaluation via Dedekind \texorpdfstring{$\zeta$}{zeta}-functions}
		Put $K_1=\mathbb Q(\zeta_5)$ and $K_2=\mathbb Q(\sqrt{5})$.
		We have \( \zeta_{K_2}(s) = \zeta(s) L(s, \chi_5)  \) and \( \zeta_{K_1}(s) = \zeta_{K_2}(s) L(s, \chi_c) L(s,\bar{ \chi}_c) \). Using this we can rewrite~\eqref{basisvergelijking} as
		\begin{equation}
		\label{basisvergelijkingD}
		F(s)^4=(1-5^{-s})^{-1}\,H(s)\,\zeta(s)^4\frac{\zeta_{K_1}(s)}{\zeta_{K_2}(s)^2},
		\end{equation}
		in line with  \cref{simpleanna}.
		Logarithmic differentiation then leads to
		$$4\gamma_{S}=4\gamma
		+\gamma_{K_1}-2\gamma_{K_2}
		-(\log 5)/4 -A_{\pm 1}-A_{\pm 2}.$$
		Languasco \cite[Table~2, p.13]{Ale} calculated
		\begin{align*}
		\gamma_{K_1} &= 1.72062421251340476169572878865\dotsc \,, \\
		\gamma_{K_2} &= 1.40489514161703774859755907976\dotsc \,,
		\end{align*}
		(the last digit may be rounded by Pari/GP).

	\subsection{Evaluating sums of primes in arithmetic progressions}
	\label{rk:arith}
		Following the ideas of \cite[Remark 1]{LaMoree}, one can evaluate the sums \( A_{\pm1}, A_{\pm2} \) and product \( C_S \) (or more generally any sum or product taken over primes in a union of congruence classes) to high accuracy.  We briefly sketch the main idea, and give the resulting evaluations. 
		
		From the Euler product of the Dirichlet $L$-function of character $\chi$, we have that
		\[
		\log L(s, \chi) = \sum_{n \geq 1} \frac{P_{\chi^n}(n s)}{n} \,,	\text{ where }  P_{\chi}(s) = \sum_{\text{$p$ prime}} \frac{\chi(p)}{p^{s}} \,.
		\]
    By M\"obius inversion, it follows from this that
		\[
		P_\chi(s) 
		 = \sum_{n \geq 1}	\frac{\mu(n)}{n} \log L(\chi^n, n s).
		\]
		The tail of the series may be easily estimated, including after differentiation with respect to~$s$.   With established procedures for evaluating \( L(\chi,s) \) and \( L^{(}{'}{}^{)}(\chi,s) \), cf.\,Languasco 
		\mbox{\cite[Sec.\,8]{Aleunified}}\footnote{The built-in algorithm implemented in \texttt{Pari/GP} \cite[Chp.\,9]{BC} is substantially slower, but would suffice. For a comparison see \cite[Sec.\,10]{Aleunified}.}, one can accurately evaluate the leading part of the sum, and hence \( P_\chi(s) \) and~\( P_\chi'(s) \).
		
		We hence can obtain accurate values for the following sums
		\begin{align*}
			P_\chi(s) = \sum_{\text{$p$ prime}} \frac{\chi(p)}{p^s} \,,
			\quad\quad
			P_\chi'(s) = \sum_{\text{$p$ prime}} \frac{\chi(p) \cdot \log{p}}{p^s} \,.
		\end{align*}
		Since the indicator function for an arithmetic progression can be expressed as a sum of characters, we can therefore accurately evaluate such sums when the primes lie in such a union of arithmetic progressions.  We can then Taylor expand a general rational function summand into the first form, and \( \log{p} \) times a general rational function summand into the second form.  By appropriately estimating the error, one can obtain certified results accurate to the desired precision.
		
		We obtain (remembering to remove the extra $ p = 11 $ contribution)
		\begin{align*}
		\log H(1) = 0.424345810469665379222671735635\ldots \\
		H(1) = 1.528590104858545837177881690521\ldots \\
		C_S = 1.116589605788653881432691480834\ldots \\
		A_{\pm 1} =  0.067453075884830773681197940988\ldots \\
		A_{\pm 2} = 0.763455872211726828059807846118\ldots 
		\end{align*}
		Hence
		\begin{equation}
		\label{gammatrivial}    
		\gamma_{S} = -0.003392959329905497116134157738\ldots \,,
		\end{equation}
		with many more accurate digits.

	\subsection{Arithmetic progression zeta functions}
		In case the set $S$ consists, with finitely many exceptions, precisely
		of all primes in a finite union of arithmetic progressions, we can express the generating
		series in terms of the following type of zeta functions 
		\begin{equation}
		\label{Fda-def}
		\zeta_{\,d,a}(s):=\prod_{p\equiv a \pmodi{d}}\frac{1}{1-p^{-s}},
		\end{equation}
		with $d$ and $a$ coprime. The associated Euler-Kronecker constants $\gamma(d,a)$ were introduced 
		and studied by Languasco and the third author \cite{LaMoree}. Using \eqref{Fda-def} we see that
		$$F(s)=\frac{\zeta(s)}{\zeta_{5,4}(s)}\prod_{\substack{p\equiv 1 \pmodi{5} \\ p>11}}\frac{1-p^{-4s}}{1-p^{-5s}}
		\prod_{p\equiv \pm 2 \pmodi{5}}\frac{1-p^{-3s}}{1-p^{-4s}}
		\prod_{p\equiv 4 \pmodi{5}}\frac{1}{1-p^{-2s}},$$
leading to
		$$\gamma_{S}=\gamma-\gamma(5,4)+B,$$
		where 
		$$B=\sum_{\substack{p\equiv 1 \pmodi{5} \\ p>11}}\left({\frac{4\log p}{p^4-1}}-{\frac{5\log p}{p^5-1}}\right)+\sum_{p\equiv \pm 2\pmodi{5}}\left({\frac{ 3\log  p}{p^3-1}}-{\frac{ 4\log  p}{p^4-1}}\right)-\sum_{p\equiv 4 \pmodi{5}}{\frac{ 2\log  p}{p^2-1}}.$$
		By \cite[Table 1]{LaMoree} we have
		$\gamma(5,4)=0.75093 25558 29083 25836 43416 62578 48050 \ldots$.
		The constant $B$ can be 
		computed with high-precision as 
		explained in \cite[Remark~1]{LaMoree}, and briefly recapitulated in \cref{rk:arith} above.
		In particular, using \texttt{gammaS\_ver} from Languasco\footnote{See \scriptsize \url{https://www.dei.unipd.it/~languasco/EulerconstAP-site/gp/sumprimes_derlog_AP.gp}.}, one obtains
		$$B =0.17032 39315 97644 90064 16954 14757 47429\dotsc.$$ 
Putting everything together one sees that \eqref{gammatrivial} is confirmed.

\section{The parity of the Fourier coefficients of \texorpdfstring{$X_0(11)$}{X\_0(11)}}
\label{parity}
\label{sec:parity}
We continue our notation: 
$$f_E=\eta(z)^2\eta(11z)^2=q - 2 q^{2} -  q^{3} + 2 q^{4} +  q^{5} + 2 q^{6} - 2 q^{7} - 2 q^{9} - 2 q^{10} +  q^{11} + O(q^{12})$$
is the unique normalized cuspform of weight $2$ and level $\Gamma_0(11)$, corresponding to the modular elliptic curve $X_0(11)$. We set $\ell = 2$ and 
$S=S_{f_E,2}$.
\subsection{Parity characterizations}
In this section we recall some results from the literature showing
that the parity of $a_n(f_E)$ carries interesting arithmetic information.

Let $\#E(\mathbb F_p)$ be the number of $\mathbb F_p$-rational points of an elliptic curve $E$ modulo $p$. Denote by $N_p(f)$ the number of distinct roots modulo $p$ of a polynomial $f\in \mathbb Z[x]$ and let~$r_{12}(n)$ be the number of representations of
$n$ as a sum of twelve squares. The Tribonacci sequence~$(T_n)_{n\ge 1}$ is defined by $T_1=1, T_2=1, T_3=2$, and 
by the third 
order linear recurrence~$T_n=T_{n-1}+T_{n-2}+T_{n-3}$ for every $n\ge 4.$
\begin{Prop}
\label{bunchofequivalences}
Suppose that 
$p\equiv 1,3,4,5,9 \pmodi{11}$ is a prime number. Then the following are equivalent:
\begin{enumerate}
    \item \label{(1)}$a_p$ is even;
    \item\label{(2)} $\#E(\mathbb F_p)$ is even with $E$ being the elliptic curve $E:~y^2-y=x^3-x^2$;\footnote{Mazur \cite{error} discusses for a general audience the distribution of the numbers 
$\#E(\mathbb F_p)$ in the context of the Sato-Tate conjecture. His article is an excellent first introduction to $f_E$ and its Fourier coefficients.}    
    \item \label{(3)} $p=X^2+11Y^2$ \cite{CC};
     \item \label{(4)} $N_p(4X^3-4X^2+1)=3$ \cite{CC};
     \item \label{(5)} $r_{12}(p)\equiv 4 \pmodi{11}$ \cite[(5.4)]{Ahlgren};
     \item \label{(6)} $p$ divides $T_{p-1}$ \cite{EH}.
\end{enumerate}
\end{Prop}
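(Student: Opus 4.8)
The plan is to route all six conditions through the single statement ``$\Frob_p=\id$ in $\Gal(L/\QQ)$'', where $L$ is a fixed $\SS_3$-extension of $\QQ$. Concretely, take $L$ to be the splitting field of $g(X)=4X^3-4X^2+1$; the substitution $X=Z/2$ shows $L$ is also the splitting field of $Z^3-2Z^2+2$, and one computes $\operatorname{disc}(g)=-2^4\cdot 11$ (equivalently $\operatorname{disc}(Z^3-2Z^2+2)=-2^2\cdot 11$). Hence $g$ is irreducible with non-square discriminant, so $\Gal(L/\QQ)\cong\SS_3\cong\GL_2(\FF_2)$; the unique quadratic subfield of $L$ is $\QQ(\sqrt{-11})$; and $L/\QQ$ is unramified outside $\{2,11\}$ (note $2$ is a non-residue mod $11$, so every prime in the statement is unramified in $L$). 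For the curve $E\colon y^2-y=x^3-x^2$ one has $-(x,y)=(x,1-y)$, so the nonzero $2$-torsion points are exactly $(\theta,1/2)$ with $g(\theta)=0$; thus $\QQ(E[2])=L$, and the mod-$2$ representation $\rhobar\colon G_\QQ\to\GL_2(\FF_2)$ attached to $f_E$ (equivalently to $E=X_0(11)$) factors through $\Gal(L/\QQ)$, with $a_p\equiv\tr\rhobar(\Frob_p)\pmodi 2$ and $\#E(\FF_p)=p+1-a_p$ for all $p\ne 2,11$.

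First I would dispose of the two elementary equivalences and then make the crucial reduction. In $\GL_2(\FF_2)$ an element has trace $0$ exactly when it has order $1$ or $2$, and trace $1$ exactly when it has order $3$; since $g$ is separable mod $p$ for $p\notin\{2,11\}$, it follows that $a_p$ is even $\iff\Frob_p$ is not a $3$-cycle in $\SS_3$ $\iff g$ has a root mod $p$, i.e.\ $N_p(g)\ge 1$. Because $p+1$ is even for odd $p$, also $\#E(\FF_p)\equiv a_p\pmodi 2$, giving (1)$\iff$(2) unconditionally. Now the hypothesis $p\equiv 1,3,4,5,9\pmodi{11}$ enters: by quadratic reciprocity it says exactly that $p$ splits in $\QQ(\sqrt{-11})$, equivalently that $\Frob_p$ lies in the index-$2$ subgroup $A_3\subset\SS_3$. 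Hence $\Frob_p$ is \emph{either} the identity (so $g$ splits completely mod $p$ and $N_p(g)=3$) \emph{or} a $3$-cycle (so $g$ is irreducible mod $p$ and $N_p(g)=0$); the middle case $N_p(g)=1$ is impossible, since it would force $\Frob_p$ to be a transposition, which does not lie in $A_3$. This collapses ``$a_p$ even'' to ``$N_p(g)=3$'' to ``$\Frob_p=\id$'', establishing (1)$\iff$(2)$\iff$(4). I expect \emph{this} to be the conceptual crux: the quadratic resolvent of the $2$-division polynomial of $X_0(11)$ happens to be $\QQ(\sqrt{-11})$, so the congruence condition modulo $11$ is precisely the splitting condition that excises the ambiguous factorization type; everything else is a classical citation or a routine check.

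Finally I would match the three remaining conditions to ``$\Frob_p=\id$ in $\Gal(L/\QQ)$'' and invoke the cited literature. For (3): $x^2+11y^2$ is the principal form of discriminant $-44$, the discriminant of the order $\Z[\sqrt{-11}]$ of conductor $2$ in $\QQ(\sqrt{-11})$, whose form class group is $\Z/3\Z$; by the theory of primes represented by $x^2+ny^2$ used in \cite{CC}, a prime $p\ne 2,11$ satisfies $p=x^2+11y^2$ iff it splits completely in the ring class field $H$ of that order, and checking that $H$ has quadratic subfield $\QQ(\sqrt{-11})$, cubic subfields of discriminant $-44$, and ramification only at $\{2,11\}$ identifies $H$ with $L$, so (3)$\iff$``$\Frob_p=\id$''. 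For (6): the Tribonacci characteristic polynomial $X^3-X^2-X-1$ also has discriminant $-44$, so (the cubic field of discriminant $-44$ being unique up to isomorphism, or by a direct comparison) it defines the same cubic field as $Z^3-2Z^2+2$ and hence has splitting field $L$; the analysis of \cite{EH} then identifies ``$p\mid T_{p-1}$'' with ``$X^3-X^2-X-1$ splits completely mod $p$'', which under the hypothesis on $p$ is again ``$\Frob_p=\id$''. For (5): $\theta(z)^{12}$ is a weight-$6$ form on $\Gamma_0(4)$ whose cuspidal part is a scalar multiple of $\eta(2z)^{12}$, and reducing modulo $11$ brings $r_{12}(p)$ into the weight-$2$ level-$11$ picture, so that \cite[(5.4)]{Ahlgren} yields $r_{12}(p)\equiv 4\pmodi{11}$ iff $p=x^2+11y^2$, hence (5)$\iff$(3). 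The only residual obstacle in this last part is bookkeeping: extracting the precise normalizations from \cite{CC,EH,Ahlgren} and confirming the field identifications (e.g.\ by verifying that $X^3-X^2-X-1$ has a root in $\QQ[Z]/(Z^3-2Z^2+2)$, or by comparing a few Frobenius traces).
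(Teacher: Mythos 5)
Your approach is correct, and it is genuinely more unified and explicit than the proof in the paper, which is essentially a literature review: the paper proves (1)$\iff$(2) in one line from $\#E(\FF_p)=1+p-a_p$, and then simply attributes (1)$\iff$(3)$\iff$(4) to Chowla--Cowles, (1)$\iff$(5) to Ahlgren, and (1)$\iff$(3)$\iff$(6) to Evink--Helminck. You instead fix the mod-$2$ Galois representation attached to $f_E$, identify its field of determination as $L=\QQ(E[2])$, observe that the quadratic resolvent of the $2$-division polynomial $4X^3-4X^2+1$ is $\QQ(\sqrt{-11})$, and explain how the hypothesis $p\equiv 1,3,4,5,9\pmodi{11}$ forces $\Frob_p\in\mathfrak A_3$, so that the ambiguous type $N_p(g)=1$ is excised and all six conditions collapse to ``$\Frob_p=\id$ in $\Gal(L/\QQ)$''. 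This is in the same spirit as Ito's conceptual re-proof of the Chowla--Cowles equivalence which the paper mentions only as an aside, and it buys a single organizing principle in place of five separate references. The slight trade-off is that for (3), (5) and (6) you still lean on the cited sources (ring class field theory, Ahlgren's congruence, the Tribonacci analysis) for the nontrivial identifications, though you make the target of each identification explicit, which the paper does not. Two small bookkeeping remarks: your discriminant computations $\operatorname{disc}(4X^3-4X^2+1)=-2^4\cdot 11$, $\operatorname{disc}(Z^3-2Z^2+2)=-44$, and $\operatorname{disc}(X^3-X^2-X-1)=-44$ all check out, and the observation that the paper attributes (5)$\iff$(1) to Ahlgren while you derive (5)$\iff$(3) is immaterial since the equivalences are cyclic.
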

If $E$ is an elliptic curve over $\mathbb Q$ and $p$ is a good prime for $E$,
then $\#E(\mathbb F_p)=1+p-c_p$, with $c_p$ the trace of Frobenius.
In our case $c_p=a_p$ and this proves the equivalence of~\ref{(1)} and~\ref{(2)}.
Chowla and Cowles \cite{CC} established the equivalence of \ref{(3)} and \ref{(4)} with \ref{(1)}. 
Ahlgren \cite{Ahlgren} showed  the equivalence of \ref{(5)} with \ref{(1)} (for all primes $p\ne 11$).
Evink and Helminck \cite{EH}) proved the equivalence of \ref{(1)}, \ref{(3)}, and \ref{(6)} (and were apparently not aware of \cite{CC}).

Equivalences between  \ref{(1)} (Fourier coefficients) and \ref{(4)} (number of roots of a polynomial modulo $p$) hold in greater 
generality, see Khare et al.\,\cite{KLW} and Serre \cite{serrejordan}. The same holds for equivalences  between \ref{(1)} 
and \ref{(6)} (linear recurrences),
see Moree and Noubissie \cite{MN} and Rosen et al.~\cite{RTTY}.

Ito \cite{Ito} reproved the equivalence of \ref{(2)} and \ref{(3)} more conceptually and gave similar examples. His starting point is an elliptic curve
$E$ given in Weierstrass normal form \mbox{$Y^2=X^3+AX+B$}, with $A$ and $B$ integers.
Then the 2-division field $\mathbb Q(E[2])$ is the splitting field of the equation
$X^3+AX+B$, and so is Galois over $\mathbb Q$. It has 
$\mathbb Q(\sqrt{\Delta})$ with $\Delta=-2^4(4A^3+27B^2)$ as a subfield.
Ito relates the parity of $\#E(\mathbb F_p)$ to the splitting behavior of $p$ in 
$\mathbb Q(E[2])$ in case  $\mathbb Q(E[2])$ is cyclic of degree 3 over $\mathbb Q$.
In this way he reproves the result of Chowla and Cowles and gives some further similar examples, where now we restrict to the primes splitting in 
$\mathbb Q(\sqrt{\Delta})$ rather than those splitting in
$\mathbb Q(\sqrt{-11})$ as in \Cref{bunchofequivalences}. 
For~$X_0(11)$ we have Weierstrass form $Y^2=X^3-13392X-1080432,$ leading to
a field with defining polynomial $$X^6-80352X^4+1614110976X^2+21910810749696,$$ which 
turns out to be isomorphic to the field $L$ defined in the next section.

\subsection{The Dirichlet series \texorpdfstring{$F(s)$}{F(s)}}
Since the $a_n(f_E)$ are multiplicative, so is the characteristic function
of $S$, and hence $F(s)$ has an Euler product expansion.
Because $c$ modulo~$2$ is a dihedral modular eigenform with field of determination
$L := {}$ 2-ray class field of~$K:= \QQ(\sqrt{-11})$ (cf.\,Milne \cite[Chp.\,5]{Milne}), we can determine the individual Euler factors $F_p(s)$ for the primes unramified in $L/\QQ$ by considering their conjugacy class in $\Gal(L/\QQ)$ under the Artin map $p \mapsto \Frob_p$. Here $\Frob_p \subseteq \Gal(L/\QQ)$ is the conjugacy class of elements~$\sigma$ in $\Gal(L/\QQ)$ so that for some prime $\mathfrak P$ of $L$ above $p$ we have $\sigma(\alpha) \equiv \alpha^{p}$ modulo $\mathfrak P$ for every $\alpha \in \OO_L$, the ring of integers of $L$. The field $L$
\cite[{\href{https://www.lmfdb.org/NumberField/6.0.21296.1}{\texttt{NumberField/6.0.21296.1}}}]{LMFDB} 
is an extension of $K$ of degree 3.
The extension \( L/\mathbb{Q} \) is \( {\mathfrak S}_3 \), with defining polynomial \[ X^6 - X^5 + 2X^4 - 3X^3 + 2X^2 - X + 1. \]

In addition to the ramified primes $2$ and $11$, we consider the following three sets of primes corresponding to the three conjugacy classes in $\Gal(L/\QQ) \simeq {\mathfrak S}_3$:

	\begin{center}
		\renewcommand{\arraystretch}{1.5}
		\begin{tabular}{c|c|c|c|c}
		Prime set & Order of $\Frob_p$ & $p$ in $K/\QQ$ & $\pp$ in $L/K$ & Sample primes \\ \hline\hline
		$\CCC_1$ & 1 & split  & split  & 47, 53, 103, 163, 199, \ldots 
		\\
		$\CCC_2$ & 2 & inert & split & 7, 13, 17, 19, 29, 41 \ldots 
		\\
		$\CCC_3$ & 3 & 
		split 
		& inert & 3, 5, 23, 31, 37, 59, 67, \ldots 
	\end{tabular}
	\end{center}
By the Chebotarev density theorem these sets of primes have natural densities corresponding to the densities of number of elements of order 1, 2, respectively 3 in $\SS_3$. These densities are of course $1/6$, $1/2$, and $1/3$. 

We now compute $F_p(s)$ for each behavior in $\Gal(L/\QQ)$ separately using the Hecke recurrence
$$a_{p^k} = a_p a_{p^{k-1}} - p a_{p^{k-2}}$$
that holds for all $k \geq 2$ for primes $p \neq 11$, along with $a_{11^k} = a_{11} a_{11^{k-1}}$ for all $k \geq 1$. 

Write $\bar a_n$ for $a_n$ modulo $2$, so that $n \in S$ if and only if $\bar a_n = 1$. Let $(\rho, V)$\label{rho} be the two-dimensional irreducible representation of $\Gal(L/\QQ) \simeq {\mathfrak S}_3$, the \emph{standard representation}. Since $(\rho, V)$ is defined over $\ZZ$ we can also reduce it modulo $2$ to obtain $(\rhobar, \Vbar)$, a faithful representation 
$$\rhobar: \Gal(L/\QQ) = {\mathfrak S}_3 \to \GL(\Vbar) = \GL_2(\FF_2).$$
Since $\bar c$ is a mod-$2$ eigenform with mod-$2$ Galois representation $\rhobar$, 
we have
$$\bar a_p = \tr(\Frob_p \mid \Vbar)$$
for all primes $p \neq 2, 11$. The sequence $\{\bar a_{p^k}\}_{k \geq 0}$, as a linear recurrence sequence of order~$\leq 2$ in a finite field of order $2$, is periodic of period at most 4. We therefore easily compute, keeping in mind that $\bar a_1 = 1$,
\begin{center}\begin{tabular}{|c||c|c|c|c|c|}
\hline
    $p$    & $\Frob_p$ &  $\bar a_p$  & $\bar a_{p^k} = $ & $\bar a_1, \bar a_p, \ldots \bar a_{p^4}$
     & $F_p(s)$\\
     \hline\hline
    $p \in \CCC_1$ & id & 
    $0$ &
    \multirow{3}{*}[-1.9em]{ $\bar a_p \bar a_{p^{k-1}} + \bar a_{p^{k-2}}$} &
    1, 0, 1, 0, 1 & 
    $\displaystyle \frac{1}{1 - p^{-2s}} \phantom{\Bigg)}$ \\
    \cline{1-3} \cline{5-6}
    %
    $p \in \CCC_2$ & $2$-cycle & 
    $0$ &
    &
    1, 0, 1, 0, 1 & 
    $\displaystyle \frac{1}{1 - p^{-2s}} \phantom{\Bigg)}$\\
    \cline{1-3} \cline{5-6}
    $p \in \CCC_3$ & $3$-cycle &
    $1$ &
    &
    1, 1, 0, 1, 1 & 
    $\displaystyle 
    \frac{1 + p^{-s}}{1 - p^{-3s}} \phantom{\Bigg)}$\\
    \hline    
    2 & --- & 0 &
    \multirow{2}{*}[-1em]
    {$\bar a_p \bar a_{p^{k-1}}$}
    & 1, 0, 0, 0, 0 & 
    1\\
   \cline{1-3} \cline{5-6}
    11 & --- &1 & 
    & 1, 1, 1, 1, 1 & 
    $\displaystyle \frac{1}{1-11^{-s}} \phantom{\Bigg)}$\\
    \hline
\end{tabular}\end{center}
leading to 
\begin{equation}
\label{eqn:fdef}  
F(s) = \frac{1}{1 - 11^{-s}} \prod_{p \in \CCC_1 \,\cup \,\CCC_2} \frac{1}{1 - p^{-2s}} \prod_{p \in \CCC_3} \frac{1 + p^{-s}}{1 - p^{-3s}}.
\end{equation}
The idea is to express $F(s)$ in terms of known $L$-series. As the sets
$\CCC_1$ and $\CCC_3$ are non-abelian, we expect we cannot only do with Dirichlet 
$L$-series and will need Artin $L$-series.  In this case, it turns out that invoking only \( L(\rho, s) \) for $\rho$ as on p.\,\pageref{rho} above is sufficient to express~\( F(s) \) in the form we want, given in \eqref{eqn:fv2} below.

\subsection{The Artin \texorpdfstring{$L$}{L}-function of \texorpdfstring{$(\rho, V)$}{(rho, V)}}\label{sec:parity:artin}

The following table shows the computation of $L_p(\rho,s) \coloneqq  L_{\Q,p}(\rho, s)$ as in  \eqref{Artinlocalfactor}. 

\begin{center}\begin{tabular}{|c||c|c|c|c|}
\hline
    $p$    & 
    $I_p$ &  
    $V^{I_p}$ & 
    $\big(\Frob_p \mid V^{I_p}\big)$ & 
    $L_p(\rho,s)$ 
    \\
    \hline\hline
    $p \in \CCC_1$ & \multirow{3}{*}[-2em]{\{1\}} & 
    \multirow{3}{*}[-2em]{$V$} &
    $\begin{psmallmatrix} 1 & 0 \\ 0 & 1 \end{psmallmatrix}$ &
    $\displaystyle \frac{1}{(1 - p^{-s})^2} \phantom{\Bigg)}$\\
    \cline{1-1} \cline{4-5}
    $p \in \CCC_2$ & 
    &
    &
    $\begin{psmallmatrix} 0 & 1 \\ 1 & 0 \end{psmallmatrix}$ & 
    $\displaystyle \frac{1}{1 - p^{-2s}} \phantom{\Bigg)}$\\
    \cline{1-1} \cline{4-5}
    $p \in \CCC_3$ & 
    &
    &
    $\begin{psmallmatrix} e^{\smash{2\pi i/3}} & 0 \\[0.2ex] 0 & e^{\smash{-2\pi i/3}} \end{psmallmatrix}$ & 
    $\displaystyle \frac{1}{1  + p^{-s} + p^{-2s}} \phantom{\Bigg)}$\\
    \hline    
    2 &
    $3$-cycles& 
    $\{0\}$&
    --- & 
    1\\
\hline    %
    11 &
    $2$-cycles& 
    line&
    $\begin{psmallmatrix} 1 \end{psmallmatrix}$ & 
    $\displaystyle \frac{1}{1 - 11^{-s}} \phantom{\Bigg)}$\\
\hline    
\end{tabular}\end{center}

For the ramified primes we fill in the table by computing the factorizations in $\OO_L$: the prime $(2)$ is the cube of a prime of $\OO_L$, so that $I_2$ is a group of order $3$; the prime $(11)$ is the square of the product of three primes of $\OO_L$, so that $I_{11}$ is any of the three $2$-cycles of~$\Gal(L/\QQ)$.

It follows that
\begin{equation*}
 L(\rho, s) = \frac{1}{1 - 11^{-s}} \prod_{p \in \CCC_1} \frac{1}{(1 - p^{-s})^2} \prod_{p \in \CCC_2} \frac{1}{1 - p^{-2s}}
 \prod_{p \in \CCC_3} \frac{1 - p^{-s}}{1 - p^{-3s}}.
\end{equation*}

Comparing the Euler product \eqref{eqn:fdef}  for $F(s)$ with that for $L(\rho,s)$ derived in the previous section, gives
\begin{equation}\label{eqn:fv2}
	F(s)^3 = (1 - 2^{-2s})^4 (1 + 11^{-s})^3\frac{\zeta_K(s) \zeta(2s)^3}{L(\rho, s)} \prod_{p \in \CCC_3} \frac{(1 - p^{-2s})^6}{(1 - p^{-3s})^4}\,,
\end{equation}
on observing that
\begin{align*}
	\zeta_K(s) &=  \frac{1}{1 - 2^{-2s}} \cdot \frac{1}{1 - 11^{-s}} \cdot \prod_{p \in \CCC_1 \cup \CCC_3} \frac{1}{(1 - p^{-s})^2} \prod_{p \in \CCC_2} \frac{1}{1 - p^{-2s}};\\
	\zeta(2s)^3 &= \frac{1}{(1 - 2^{-2s})^3} \cdot \frac{1}{(1 - 11^{-2s})^3}\prod_{p \in \CCC_1 \cup \CCC_2 \cup \CCC_3} \frac{1}{(1 - p^{-2s})^3}. 
\end{align*}

\begin{rk}
The more general method illustrated in \cref{Haberland59} will give us 
    \[        F(s)^3 = (1 - 2^{-2s})^4 (1 + 11^{-s})^3\frac{L(\one, s) L(\sgn, s) \zeta(2s)^3}{L(\rho, s)} \prod_{p \in \CCC_3} \frac{(1 - p^{-2s})^6}{(1 - p^{-3s})^4}\,,
    \]
    which matches \eqref{eqn:fv2} (initially obtained in a more ad-hoc manner), as $L(\one,s) = \zeta(s)$ and  $\zeta_K(s) = L(\sgn,s) \zeta(s)$ (after one extracts a factor \( \zeta(2s)^3 \) from \( H(s) \)).
\end{rk}

\subsection{Reformulation in terms of Dedekind \texorpdfstring{$\zeta$}{zeta}-functions}

Usinsg Frobenius reciprocity, one can verify that \(
    \Ind_{\mathfrak{A}_3}^{\mathfrak{S}_3} \one = \one \oplus \sgn \),
where \( \sgn = \sgn_3 \) is the character on \( \mathfrak{S}_3 \).  Hence the regular representation satisfies
\[
    \Ind_{\{e\}}^{\mathfrak{S}_3} \one = \one \oplus \sgn \oplus \rho^{\oplus2} = \Ind_{\mathfrak{A}_3}^{\mathfrak{S}_3} \oplus \rho^{\oplus2} \,.
\]
In the extension \( \Gal(L/\QQ) \cong \mathfrak{S}_3 \), \( K = L^{\mathfrak{A}_3} \) and \( L = L^{\{e\}} \).  So by the behaviour  Artin $L$ functions under induced representations, and their connection with Dedekind \( \zeta \)-functions (see \cref{sec:Artin}), we obtain
\[
    \zeta_L(s) = \zeta_K(s)\, L_\Q(\rho, s)^2 \,.
\]

On squaring \eqref{eqn:fv2} and replacing $L(\rho,s)^2$ by $\zeta_L(s)/\zeta_K(s)$, we obtain
\begin{equation}\label{eqn:fvArtinfree}
	F(s)^6 = (1 - 2^{-2s})^8\cdot (1 + 11^{-s})^6\cdot \frac{\zeta_K(s)^3 \zeta(2s)^6}{\zeta_L(s)} \cdot \prod_{p \in \CCC_3} \frac{(1 - p^{-2s})^{12}}{(1 - p^{-3s})^8}.
\end{equation}

\subsection{Computation of the Euler-Kronecker constant}
Notice that \eqref{eqn:fvArtinfree} is of the form
$$F(s)^6=\frac{\zeta_K(s)^3}{\zeta_L(s)}H(s),$$
with
\[ 
H(s) \coloneqq (1 - 2^{-2s})^8 \cdot (1 + 11^{-s})^6\cdot \zeta(2s)^6 \cdot \prod_{p \in \mathcal{C}_2} \frac{(1 - p^{-2s})^{12}}{(1 - p^{-3s})^8}.
\] absolutely converging for $\Re(s)>1/2$. Taking the log-derivative and adding $2/(s-1)$ on both sides gives
$$6\frac{F'}{F}(s)+\frac{2}{s-1}=3\frac{\zeta_K'}{\zeta_K}(s)+\frac{3}{s-1}-\Big(\frac{\zeta_L'}{\zeta_L}(s)+\frac{1}{s-1}\Big)+\frac{H'}{H}(s).$$
Letting $s$ tend to $1$ from above along the real line and invoking \eqref{define}, then show
the existence of $\gamma_{S}$ and, moreover, we obtain
$$6\gamma_{S}=3\gamma_K-\gamma_L+\frac{H'}{H}(1).$$
Working out further the latter term involving $H$ 
involves only elementary calculus and is left
to the reader. We obtain 
$$6\gamma_{S}=\frac{16}{3}\log 2 - \frac{6}{12}\log 11+3\gamma_K-\gamma_L+
12\frac{\zeta'}{\zeta}(2)+24\sum_{\p \in \CCC_3}\Big(\frac{\log p}{p^2 - 1} - \frac{\log p}{p^3 - 1}
		\Big),$$
that is
$$\gamma_{S}=\frac{8}{9}\log 2 - \frac{\log 11}{12}+\frac{\gamma_K}{2}-\frac{\gamma_L}{6}+
2\frac{\zeta'}{\zeta}(2)+4\sum_{\p \in \CCC_3}\Big(\frac{\log p}{p^2 - 1} - \frac{\log p}{p^3 - 1}	\Big).$$

The Ihara bounds \eqref{eqn:ihara} with \( x = 10^8 \) give
\begin{align*}
    \gamma_K &\in (0.49294357\ldots,0.49305431\ldots) \,, \\
    \gamma_L &\in (2.00125903\ldots, 2.00155473\ldots) \,.
\end{align*}
Unconditionally, $\gamma_K$ can be obtained more accurately, see \cite{ibid}, namely
\[
    \gamma_K = 0.49299736060449193551282470608672156219\ldots
\]
Likewise, from the first \( x = 10^8 \) terms, and the tail from \eqref{trivialtail} in \cref{rk:est:sumlogpoverpkm1}, 
\begin{equation}\label{eqn:eg1:H}
  4 \sum_{p \in \CCC_3} \Big( \frac{\log{p}}{p^2 - 1} - \frac{\log{p}}{p^3 - 1} \Big) \in (0.67043751\ldots, 0.67043759\ldots) \,.
\end{equation}
This gives overall (using the accurate value for \( \gamma_K \) above), that
\[
    \gamma_{S} \in (-0.14027198\ldots, -0.14022265\ldots) \,,
\]
so \[\gamma_{S} = -0.1402\ldots, \] correct to four digits\footnote{The fourth digit does rely on the accurate version of \( \gamma_K \) obtained above.}.

\begin{rk}[Increased precision from heuristic \texttt{Pari/GP} calculation of \( \gamma_L \)]
One can apply the approach from \cref{app:higherprec} to calculate \( \gamma_L \) to higher precision using \texttt{Pari/GP}'s numerical $L$-function routines.  For the field \( L \) (which has defining polynomial \(  X^6 - X^5 + 2X^4 - 3X^3 + 2X^2 - X + 1 \))  one obtains
\[
     \gamma_L \overset{?}{=} 2.0014416775755487155542192\ldots
\]
From the accurate unconditional value of \( \gamma_K \), and the estimate in \eqref{eqn:eg1:H}, we hence obtain
\[
    \gamma_{S} \overset{?}{=} -0.1402(531)\ldots \,,
\]
with 3 extra \emph{uncertified} digits.
\end{rk}

\subsection{Computation of \texorpdfstring{\( C_{S} \)}{C\_S}}  Application of \eqref{residu}, with \( \delta = 1/3 \) to \eqref{eqn:fvArtinfree} gives
$$
    C_{S} = \frac{3^{4/3}}{4^{4/3}}\cdot \frac{2 \pi^2}{11 \Gamma(1/3)} \cdot \prod_{p\in\CCC_3} \frac{(1-p^{-2})^{2}}{{(1 - p^{-3})^{4/3}}} \cdot \frac{\big(\Res_{s=1} \zeta_K(s))^{1/2}}{\big( \Res_{s=1} \zeta_L(s) \big)^{1/6}}.
$$
From the analytic class number formula, see, e.g.\,\cite[Thm.\,61]{FT}, one can obtain
\begin{align*}
    \Res_{s=1} \zeta_K(s) &= \frac{\pi}{\sqrt{11}} \,,  \quad
    \Res_{s=1} \zeta_L(s) = \frac{\pi^3}{11\sqrt{11}} \Reg_L \,,
\end{align*}
and so
\[
    C_{S} = \frac{3^{4/3}}{4^{4/3}} \frac{2 \pi^2}{11 \Gamma(1/3)} \cdot \prod_{p\in\CCC_3} \frac{(1-p^{-2})^{2}}{{(1 - p^{-3})^{4/3}}} \cdot \frac{1}{\Reg_L^{1/6}}.
\]
Here
\begin{align*}
    \Reg_L &= 4 \log^2|r^4 - r^3 + 2r^2 - 2r + 1| \\
    & = 0.371341380445831860447262322286513810895\ldots,
\end{align*}
where \( r \) is the root of \(  X^6 - X^5 + 2X^4 - 3X^3 + 2X^2 - X + 1 \) near \( -0.148\ldots \pm -0.722i \).

As before, computing 
\[
    \log \prod_{p\in\CCC_3} \frac{(1-p^{-2})^{2}}{{(1 - p^{-3})^{4/3}}}
\]
by summing the \( p \leq 10^8 \) and estimating the error from the tail in \eqref{eqn:log1m1ontail}, gives
\begin{align*}
    \prod_{p\in\CCC_3} \frac{(1-p^{-2})^{2}}{{(1 - p^{-3})^{4/3}}} \in (0.766292551\ldots, 0.766292581\ldots) 
\end{align*}
on exponentiating. 
Hence \( C_{S} \in (0.41256118\ldots, 0.41256120\ldots) \), and so
\[
    C_{S} = 0.412561\ldots,
\]
correct to 6 digits.

\section{The Fourier coefficients of \texorpdfstring{$\Delta_{16}$}{Delta\_16} modulo 59}
\label{Haberland59}
The cusp forms for the full modular group are well-studied.
In the table below we give the full list of forms for which the corresponding weight-$k$ space of modular forms is $1$-dimensional, so that the Hecke eigenvalues are rational integers. Write $
\Delta_k$
for the normalized form of weight $k$. 
We have for example $$\Delta_{16} = q + 216q^2 - 3348q^3 + 13888 q^4 + 52110q^5 + O(q^6).$$
Following convention, we write $\Delta$ instead of 
$\Delta_{12}$. 

    \begin{table}[H]
\begin{center}
\begin{tabular}{|c||c|c|c|c|c|c|}
\hline
Weight $k$ & $12 $ & $ 16 $ & $ 18  $ & $ 20 $ & $ 22 $ & $ 26  $\\
Form & $\Delta$ & $\Delta E_4$ & $\Delta E_6  $ & $E_4^2\Delta $ & $E_4E_6 \Delta$ & $E_4^2E_6\Delta  $\\
\hline
\end{tabular}
\end{center}
\label{tab:formslist}
\end{table}
Many congruences for the 
Fourier coefficients
$a_n(\Delta_k)$
appeared in the literature.
This whole m\'enagerie of
results begged for a more theoretical explanation,
which only became available in the
early 1970s with 
Serre and Swinnerton-Dyer's analysis of modular forms modulo $\ell$ \cite{serrecong,S-D}.
They prove that congruences modulo a prime $\ell$ can only come in three flavors:
\begin{enumerate}[ref={(\roman*)}]
\item\label[type]{type1:eis} {\bf Eisenstein/abelian:} $a_n(\Delta_k)\equiv n^v \sigma_{k-1-2v}(n) \pmodi{\ell}$ for all $ (n,\ell)=1, $ and for some $ v\in\{0,1,2\}. $
\item\label[type]{type2:dih} {\bf Dihedral:} $a_n(\Delta_k)\equiv 0 \pmodi{\ell}$, whenever  
$\big(\frac{n}{\ell}\big) = -1$.
\item\label[type]{type3:except} {\bf Exceptional $\mathfrak S_4$:} $p^{1-k}a_p(\Delta_k)^2 \equiv\,0,1,2$ or $4 \pmodi{\ell}$ for all primes $ p\ne \ell. $
\end{enumerate} 
For the third
case, Serre and Swinnerton-Dyer showed only that \emph{if} a congruence of this type existed, 
then 
 necessarily $k=16$ and $\ell=59$; they were not able to establish it fully.
 This was eventually proved by 
 Haberland \cite{Haberland}, using Galois cohomological methods, in a series
of three papers.
Later, Boylan \cite{Boylan}, and Kiming and Verrill \cite{KiVe} gave different and
rather shorter proofs. Boylan also determined all such congruences for the coefficients of eta-product newforms.

The Euler-Kronecker constants for  
the sets \mbox{$S:=\{n:~\ell\nmid a_n(\Delta_k)\}$} for all congruences of \cref{type1:eis,type2:dih} were determined by Ciolan et al.~\cite{CLM}. For the one
congruence of \cref{type3:except}, this analysis was left in \cite{CLM} as a challenging open problem. Here we will 
resolve it. To this end, we set $a_n := a_{n}(\Delta_{16})$ and $S \coloneqq S_{\Delta_{16}, 59} = \{n \colon 59\nmid a_{n}\}$
for the rest of this section.

\subsection{The Galois representation and some relevant fields}\label{CCC}

A construction of Deligne attaches to $\Delta_{16}$ the Galois representation 
$$\rhobar = \rhobar_{\Delta_{16}, 59}: \Gal(\overline{\QQ}/\QQ) \longrightarrow \GL_2(\FF_{59}),$$
unramified outside $59$ and satisfying $\tr\rhobar(\Frob_p) = a_p$ 
as well as 
$\det \rhobar = \omega_{59}^{15}$  for~\mbox{$p \neq 59$}, where $$\omega_{59}: \Gal(\overline{\QQ}/\QQ) \twoheadrightarrow \Gal(\QQ(\zeta_{59})/\QQ \simeq \FF_{59}^\times$$ is the mod-$59$ cyclotomic character. 

We also consider the projectivization of $\rhobar$~:
$$\mathbb P \rhobar: \Gal(\overline{\QQ}/\QQ) \to \PGL_2(\FF_{59}).$$

As conjectured by Swinnerton-Dyer \cite[Cor.\,(iii) to Thm.~4]{Antwerp} and proved by 
Haberland~\cite{Haberland} (see also the exposition in Boylan \cite{Boylan}),
the image of $\mathbb P\rhobar$ in $\PGL_2(\FF_{59})$ is isomorphic to ${\mathfrak S}_4$. Let $L/\QQ$ be the fixed field of the kernel of $\mathbb P\rhobar$, so that $\Gal(L/\QQ) \simeq {\mathfrak S}_4$. 
Swinnerton-Dyer  \cite[p.\,35]{S-D} and Boylan \cite[Table 2]{Boylan} observe that $L$
is the splitting field of \mbox{$X^4 - X^3 - 7X^2 + 11X + 3$} (but note that \cite{S-D} has a typo, transposing two of the coefficients). 
With \texttt{Pari/GP}~\cite{PARI2}, one can compute the smallest defining polynomial for this number field%
{
\catcode`\^=11%
\footnote{%
Using the command  \texttt{polredabs(nfsplitting(X^4 - X^3 - 7*X^2 + 11*X + 3))}
}%
\catcode`\^=7
} giving  $L = \QQ(\alpha)$, where $\alpha$ is a root of  
\begin{equation}\label{eqn:fLpol}
f_L(x) = \begin{aligned}[t] 
X^{24} &- 6X^{22} + X^{20} + 91X^{18} + 118X^{16} - 157X^{14} - 360X^{12} \\
& + 17X^{10} + 312X^8 + 253X^6 + 95X^4 + 17X^2 + 1.
\end{aligned}\end{equation}
The fixed field of $\mathfrak A_4$, the index-2 subgroup of $G:=\Gal(L/\QQ) \simeq \mathfrak S_4$, is $K:=\QQ(\sqrt{-59})$. Then~$G$ has five conjugacy classes: $G_1,$ $G_2^\NR$, $G_2^\QR$, $G_3$, and $G_4$, where the subscript indicates the order of the elements and the superscripts $\NR$ and $\QR$ refer to whether or not the elements of the conjugacy class fix $\QQ(\sqrt{-59})$ --- in other words, $G_2^\NR$ is the set of $2$-cycles and $G_2^\QR$ is the three nontrivial elements of the Klein-4 subgroup of~$G$. Correspondingly, we define $5$ sets of primes: $\CCC_1$, $\CCC_2^\NR$, $\CCC_2^\QR$, $\CCC_3$, and $\CCC_4$ \label{c59def}:

{
\renewcommand{\arraystretch}{1.4}
\begin{center}
\begin{tabular}{l|c|c|l}
Set & Order of $\Frob_p$ & $p$ in $K$ & Sample primes\\ 
\hline\hline
$\CCC_1$ & 1 & splits & 163, 383, 439, 461, 1237, 1511, \ldots\\
\hline
$\CCC_2^\QR$ & 2 & splits &  17, 71, 139,  197, 223, 317, 373,\ldots\\
\hline
$\CCC_2^\NR$ & 2 & inert & 13, 23, 31,  37, 43, 47, 83,\ldots\\
\hline
$\CCC_3$ & 3  & splits &  3, 5, 7, 19, 29, 41, 53, \ldots\\
\hline
$\CCC_4$ & 4  & inert & 2, 11, 61, 67, 73, 89, 97, \ldots
\end{tabular}
\end{center}
}

We also write $\CCC_2$ for $\CCC_2^\QR \cup \CCC_2^\NR$, the primes $p$ with residue field $\FF_{p^2}$ in $L$.

Since $S$ is $L$-frobenian, the shape of $F_p(s)$ as a power series in  $p^{-s}$ should depend only on the conjugacy class of  
$\Frob_p$ in the extension $L/\QQ$.

\subsection{The Dirichlet series}
Since $S=S_{\Delta_{16},59}$ is a multiplicative set, our Dirichlet series has an Euler product as in  \eqref{eulerpro}.
Our goal in this section is to use the mod-$59$ Galois representation attached to $\Delta_{16}$ as well as the recurrence satisfied by the Hecke operators to determine  $i_S(p^m)$ --- that is, whether $p^m \in S$ for various $p$, $m$. Below $p \neq 59$ is a prime.

First, note that $a_{p^m}$ and hence $i_S(p^m)$ depends only on $m$ and the conjugacy class of $\Frob_p$ in $L$. Indeed for $m =0$ we have $a_1 =1$; for $m = 1$ we know that $a_p= \tr \rho (\Frob_p)$; and for~$m \geq 2$ we have the recurrence $a_{p^{m}} = a_p a_{p^{m-1}} - p^{k-1} a_{p^{m-2}}$ with characteristic polynomial $X^2 - a_p X + p^{k-1}$, so that the claim is true by induction.
Observe that the sequence $\{\tr \rho(\Frob_p)^m\}_m$ satisfies the same recurrence as $\{a_{p^m}\}_m$, and has the same~$m = 1$ term. But the $m = 0$ terms differ, as $\tr \rho(1) = 2$ but $a_1 = 1$. Proving the following lemma is then straightforward. 

\begin{Lem} Let $\alpha$, $\beta$ be the eigenvalues of $\rho(\Frob_p)$ with multiplicity\footnote{Although $\Frob_p$ and $\rho(\Frob_p)$ are only defined up to conjugacy, its  eigenvalues are well defined!}. Then
$$\tr \rho(\Frob_p^m) = \alpha^m + \beta^m$$ 
and
$$a_{p^m} = \frac{\alpha^{m+1}-\beta^{m+1}}{\alpha-\beta} = \alpha^m + \alpha^{m-1} \beta + \cdots + \alpha \beta^{m-1} + \beta^m.$$
\end{Lem}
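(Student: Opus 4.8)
The plan is to derive both identities from the two-term linear recurrence already recorded just before the lemma, namely $a_{p^m} = a_p\, a_{p^{m-1}} - p^{k-1} a_{p^{m-2}}$ for $m \geq 2$ (here $k = 16$ and $p \neq 59$), together with the initial data $a_1 = 1$ and $a_p = \tr\rho(\Frob_p)$. Everything else is a routine induction.

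First I would record the elementary facts about the eigenvalues. Since $\rho(\Frob_p)$ is a $2\times 2$ matrix with eigenvalues $\alpha,\beta$ counted with multiplicity, one has $\alpha + \beta = \tr\rho(\Frob_p) = a_p$ and $\alpha\beta = \det\rho(\Frob_p) = p^{k-1}$, the last equality coming from the determinant of $\rho$ being the $(k-1)$st power of the cyclotomic character. The first displayed identity $\tr\rho(\Frob_p^m) = \alpha^m + \beta^m$ is then immediate: $\rho(\Frob_p^m) = \rho(\Frob_p)^m$ has eigenvalues $\alpha^m,\beta^m$, so its trace is their sum. (Implicitly this uses that $\Frob_p$, hence $\rho(\Frob_p)$, is well defined only up to conjugacy, which does not affect eigenvalues — cf.\ the footnote.)

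For the second identity, set $b_m \coloneqq \alpha^m + \alpha^{m-1}\beta + \cdots + \alpha\beta^{m-1} + \beta^m = \sum_{j=0}^m \alpha^j\beta^{m-j}$. The factorization $\alpha^{m+1} - \beta^{m+1} = (\alpha - \beta)\sum_{j=0}^m \alpha^j\beta^{m-j}$ identifies $b_m$ with $\tfrac{\alpha^{m+1}-\beta^{m+1}}{\alpha-\beta}$ when $\alpha \neq \beta$, while the geometric-sum form handles the degenerate case $\alpha=\beta$ (where it equals $(m+1)\alpha^m$). A one-line computation gives $(\alpha+\beta)b_{m-1} - \alpha\beta\, b_{m-2} = b_m$ for $m \geq 2$; combined with $\alpha + \beta = a_p$ and $\alpha\beta = p^{k-1}$, this is precisely the Hecke recurrence satisfied by $\{a_{p^m}\}_m$. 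Since moreover $b_0 = 1 = a_1$ and $b_1 = \alpha + \beta = a_p$, the two sequences agree at the first two indices and obey the same recurrence, so $b_m = a_{p^m}$ for all $m \geq 0$ by induction, which is the claim.

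There is essentially no obstacle here: the only point needing a word of care is the repeated-root case $\alpha=\beta$, where $\tfrac{\alpha^{m+1}-\beta^{m+1}}{\alpha-\beta}$ must be read as the polynomial $\sum_{j=0}^m\alpha^j\beta^{m-j}$ — and the statement of the lemma already displays this form, so nothing extra is required. The identical argument also goes through with ``$=$'' replaced by ``$\equiv \pmod{59}$'' if one prefers to work directly with $\rhobar$ and eigenvalues in $\overline{\FF}_{59}$, which is the form that will actually be used to decide membership $p^m \in S$.
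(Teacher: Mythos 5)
Your proof is correct and fills in exactly the gap the paper deliberately leaves, since the paper only says ``proving the following lemma is then straightforward'' and sketches the approach in the preceding paragraph: both $\{a_{p^m}\}_m$ and $\{\tr\rho(\Frob_p^m)\}_m$ obey the Hecke recurrence with characteristic polynomial $X^2 - a_p X + p^{k-1}$, agree at $m=1$, but differ at $m=0$ ($a_1=1$ versus $\tr\rho(1)=2$). Your argument — identifying $\alpha+\beta=a_p$ and $\alpha\beta=p^{k-1}$, checking that $b_m=\sum_{j=0}^m\alpha^j\beta^{m-j}$ satisfies the same recurrence with matching initial data $b_0=1$, $b_1=a_p$, and inducting — is precisely this intended route, just made explicit, and your care with the repeated-eigenvalue case and the remark about working modulo $59$ with $\rhobar$ are both accurate and appropriate.
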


Now let $b_m:= \displaystyle a_{p^m}^2/p^{(k-1)m}.$ Here $k = 16$. Recall that $i_S(p^m) = 1$ if and only if $59 \nmid b_m$.

\begin{Lem}
The sequence $\{b_m\}_m$ satisfies an order-3 linear recurrence with characteristic polynomial
$$ X^3 - (b_1 - 1) X^2 + (b_1 - 1) X -1.$$
\end{Lem}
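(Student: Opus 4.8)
The plan is to diagonalize $\rho(\Frob_p)$ and read the characteristic polynomial off an explicit closed form for $b_m$. By the preceding lemma, $a_{p^m} = (\alpha^{m+1} - \beta^{m+1})/(\alpha - \beta)$, where $\alpha,\beta$ are the eigenvalues of $\rho(\Frob_p)$. Here $\alpha + \beta = a_p$, and $\alpha\beta = p^{k-1}$ because the determinant of the representation is the $(k-1)$-st power of the cyclotomic character (with $k-1 = 15$, cf.~$\det\rhobar = \omega_{59}^{15}$); note $\alpha,\beta \neq 0$ since $p \neq 59$. Hence $p^{(k-1)m} = (\alpha\beta)^m$, and expanding the square yields
$$(\alpha - \beta)^2 (\alpha\beta)^m\, b_m = \big(\alpha^{m+1} - \beta^{m+1}\big)^2 = \alpha^{2m+2} - 2(\alpha\beta)^{m+1} + \beta^{2m+2}.$$
Dividing through by $(\alpha-\beta)^2(\alpha\beta)^m$ and setting $\lambda := \alpha/\beta$, this becomes
$$b_m \;=\; \frac{\alpha^2}{(\alpha-\beta)^2}\,\lambda^m \;-\; \frac{2\alpha\beta}{(\alpha-\beta)^2} \;+\; \frac{\beta^2}{(\alpha-\beta)^2}\,\lambda^{-m}.$$
Thus $\{b_m\}_m$ is a fixed linear combination of the sequences $\lambda^m$, $1$, $\lambda^{-m}$, and therefore it satisfies the linear recurrence whose characteristic polynomial is $(X - \lambda)(X - 1)(X - \lambda^{-1})$.

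Next I would rewrite this polynomial in terms of $b_1$. From $a_p = \alpha + \beta$ and $p^{k-1} = \alpha\beta$ we get
$$b_1 \;=\; \frac{a_p^2}{p^{k-1}} \;=\; \frac{(\alpha+\beta)^2}{\alpha\beta} \;=\; \frac{\alpha}{\beta} + 2 + \frac{\beta}{\alpha} \;=\; \lambda + \lambda^{-1} + 2,$$
so $\lambda + \lambda^{-1} = b_1 - 2$. Consequently
$$(X-\lambda)(X-1)(X-\lambda^{-1}) \;=\; (X-1)\big(X^2 - (b_1-2)X + 1\big) \;=\; X^3 - (b_1-1)X^2 + (b_1-1)X - 1,$$
which is exactly the claimed characteristic polynomial. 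Although $\lambda$ itself need not be rational, only its symmetric functions $\lambda\cdot\lambda^{-1} = 1$ and $\lambda + \lambda^{-1} = b_1 - 2$ enter, so the recurrence has integral coefficients; and since $p \neq 59$ makes $p^{k-1}$ a unit mod $59$, one may read $b_1$ and the whole recurrence in $\FF_{59}$ whenever that is what is wanted.

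The one place that needs separate care --- and essentially the only obstacle --- is the degenerate case $\alpha = \beta$ (equivalently $a_p^2 = 4p^{k-1}$), where the displayed formula for $b_m$ breaks down. Here the lemma instead gives $a_{p^m} = (m+1)\alpha^m$, hence $b_m = (m+1)^2$ and $b_1 = 4$; and one checks directly that $\{(m+1)^2\}_m$ satisfies the order-$3$ recurrence with characteristic polynomial $(X-1)^3 = X^3 - 3X^2 + 3X - 1$, which is precisely the polynomial above evaluated at $b_1 = 4$. (Alternatively, this case is the $\lambda \to 1$ limit of the generic computation.) Apart from this case and the bookkeeping that keeps the final answer expressed through the rational quantity $b_1$, the argument is a one-line diagonalization.
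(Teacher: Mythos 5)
Your proof is correct and arrives at the same structural insight as the paper: the recurrence for $\{b_m\}$ has characteristic roots $\lambda$, $\lambda^{-1}$, and $1$, where $\lambda = \alpha/\beta$. The paper gets there by citing the general fact that a sequence satisfying an order-2 recurrence with roots $\alpha,\beta$, once squared, satisfies an order-3 recurrence with roots $\alpha^2,\beta^2,\alpha\beta$; you instead write out the closed form for $b_m$ and read the roots off directly. That is a minor stylistic difference, not a change of method. Two small remarks, though. First, you take explicit care of the degenerate case $\alpha=\beta$; the paper's argument actually covers this automatically (the three roots simply coincide and one gets $(X-1)^3$), so this extra case analysis is unnecessary, but it does no harm and confirms the answer. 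Second, and worth flagging: the paper's own proof contains a typo --- it asserts $\alpha/\beta + \beta/\alpha = b_1 - 1$ when the correct identity is $\alpha/\beta + \beta/\alpha = b_1 - 2$, and correspondingly the factored form should read $(X^2 - (b_1-2)X + 1)(X-1)$. Your computation $\lambda + \lambda^{-1} = b_1 - 2$ is the right one, and (as in the paper) the final expanded polynomial $X^3 - (b_1-1)X^2 + (b_1-1)X - 1$ is correct.
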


\begin{proof} 
Since the sequence $\{a_{p^m}\}_m$ satisfies an order-2 linear recurrence with roots $\alpha$ and~$\beta$, the sequence $\{a_{p^m}^2\}_m$ satisfies an order-$3$ linear recurrence with roots $\alpha^2$, $\beta^2$, and $\alpha \beta$, and the sequence $b_m$ satisfies the linear recurrence with  roots $\alpha/\beta$, $\beta/\alpha$, and 1. Since $\alpha/\beta + \beta/\alpha = b_1 - 1$, the relevant characteristic polynomial of the recurrence is
\[
(X^2 - (b_1 - 1) X + 1)(X-1) = X^3 - (b_1 - 1) X^2 + (b_1 - 1) X -1.\qedhere
\]
\end{proof}

We now use the recurrence to determine $i_S(p^m)$ and hence $F_p(s)$ for primes in each conjugacy class as described in \cref{CCC}. 
Boylan \cite{Boylan} confirms Serre and Swinnerton-Dyer's computations of $b_1$-values, which appears in the second row below. We use the recurrence to fill in the rest. Note that $b_0 = 1$ for all primes $p$; moreover, $b_2 = (b_1-1)^2$. More generally, $b_m$ is in $\ZZ_{(59)}$, and below $\bar b$ denotes reduction modulo $\ell = 59$.

	 \begin{adjustbox}{rotate=0, center}
	\renewcommand{\arraystretch}{2}%
		\newcommand{\pr}[1]{%
			\ifnum#1=1%
			x%
			\else%
			x^{#1}%
			\fi%
		}%
		\begin{tabular}{c||c|c|c|c}%
		Prime set & $\CCC_1$ & $\CCC_2$ & $\CCC_3$ & $\CCC_4$ \\
		\hline
		$\bar b_1$ & 4 & 0 & 1 & 2 \\
		\hline
		\hline
		\makecell{\!\!Recurrence\! \\ \!\!polynomial\!} & $X^3 \- 3 X^2 \+ 3X \- 1$
		& $X^3 \+ X^2 \- X \- 1$
		& $X^3 \- 1$
		& $X^3 \- X^2 \+ X \- 1$\\
		\hline
		$\bar b_2$ & 9 & 1 & 0 & 1 \\
		\hline
		$\{\bar b_m\}_{m \geq 0}$ & 
		$\!1, 4, 9,\ldots,(m\+1)^2,\ldots\!$& 
		$\!1, 0, 1, 0, 1,\ldots\!$&
		$\!1, 1, 0, 1, 1,\ldots\!$ & 
		$\!1, 2, 1, 0, 1, 2, 1,\ldots\!$\\
		\hline
		 \makecell{$\bar b_m \neq 0$ \\ criterion}& 
		$m \not\equiv 58 \pmodi{59}$& 
		$m \not\equiv 1 \pmodi{2}$& 
		$m \not\equiv 2 \pmodi{3}$& 
		$m \not\equiv 3 \pmodi{4}$\\
		\hline
		 \makecell{$F_p(s)$, \\ $x \coloneqq p^{-s} $} &
		$\displaystyle \frac{1 \- \pr{58}}{(1 \- \pr{1})(1 \- \pr{59})}$&
		$\displaystyle \frac{1}{1 \- \pr{2}}$&
		$\displaystyle \frac{1 \- \pr{2}}{(1 \- \pr{3})(1 \- \pr{1})}$&
		$\displaystyle \frac{1 \- \pr{3}}{(1 \- \pr{4})(1 \- \pr{1})}$ 
	\end{tabular}
\end{adjustbox}

Finally, $\{a_{59^m}\}_m$ is a totally multiplicative sequence of numbers divisible by 59 as soon as $m \geq 1$, so that $F_{59}(s) = 1.$ Therefore,
{\[
F(s) = 
\begin{aligned}[t]
                \prod_{p \in \CCC_1} & \frac{1 - p^{-58s}}{(1 - p^{-s})(1 - p^{-59s})}
                \, \prod_{p \in \CCC_2}  \frac{1}{1 - p^{-2s}} \\
                &
                \, {} \times \prod_{p \in \CCC_3} \frac{1 - p^{-2s}}{(1 - p^{-3s})(1 - p^{-s})}
                \, \prod_{p \in \CCC_4} \frac{1- p^{-3s}}{(1 - p^{-s})(1 - p^{-4s})}.
    \end{aligned}
\]}

\subsection{Character table and the Artin \texorpdfstring{$L$}{L}-functions}
\label{sec:59:char} In this subsection we determine the Artin $L$-functions associated to the field $L$.

Recall the complete list of irreducible complex-valued representations of $\SS_4$: namely, the trivial representation $\one$; the sign representation $\sgn$; the $2$-dimensional representation inflated from the standard representation of $\SS_3$; the standard representation; and standard tensor $\sgn$.\footnote{Recall that the \emph{standard representation} of $\SS_n$ is the irreducible $(n-1)$-dimensional complement of the trivial representation in the $n$-dimensional representation permuting $n$ standard basis vectors.} It not difficult to determine the character table; see, e.g., Collins \cite{Collins}.
\renewcommand{\arraystretch}{1.4}
\begin{center}
\begin{tabular}{c|rrrrr}
character table & $G_1$ & $G_2^\QR$& $G_2^\NR$ & $G_3$ & $G_4$ \\ \hline
  $\one$ & 1 & 1 & 1 & 1 & 1 \\
  $\sgn$ & 1 & $1$ & $-1$ & 1 & $-1$ \\
  $\twod$ & 2 & 2 & 0 & $-1$ & 0 \\
   $\std$ 
   & 3 & $-1$ & $1$ & $0$ & $-1$ \\
  $\std \otimes \sgn$ 
  & 3 & $-1$ & $-1$ & 0 & $1$ \\
\end{tabular}
\end{center}

We now determine the $L$-function local factor for primes whose Frobenius elements land in each of these conjugacy classes. The table below gives, for each prime $p$ along the columns and representation~$(\rho, V)$ along the rows, the \mbox{$f_{\rho, p}(x) := \charpoly(\Frob_p \mid V^{I_p})(x)$}; the local factor of the $L$-function is then $L_p(\rho, s) = f(p^{-s})^{-1}$. For example, for $p \in \CCC^\NR_2$ (see~p.~\pageref{c59def}) and $\rho$ the $2$-dimensional irreducible representation of $\mathfrak S_4$ (which factors through its~$\mathfrak S_3$ quotient), we have $f_{p, \rho}(x) = 1 + x^2$, so that $L_p(\rho, s) = (1 + p^{-2s})^{-1}$.

	 \begin{adjustbox}{rotate=0, center}
\renewcommand{\arraystretch}{2}
\renewcommand{\frac}[2]{#2}
\newcommand{\pr}[1]{%
  \ifnum#1=1%
    x%
  \else%
    x^{#1}%
  \fi%
}%
    \begin{tabular}{c||c|c|c|c|c|c}
    $f_{p, \rho}(x)$
    & $p \in \CCC_1$ & $p \in \CCC_2^\QR$& $p \in \CCC_2^\NR$ & $p \in \CCC_3$ & $p \in \CCC_4$ & $p = 59$ 
\\ \hline\hline
  $\one$ & 
    $\displaystyle \frac{1}{1-{\pr{1}}}$ & 
    $\displaystyle \frac{1}{1-{\pr{1}}}$ & 
    $\displaystyle \frac{1}{1-{\pr{1}}}$ &
    $\displaystyle \frac{1}{1-{\pr{1}}}$ &
    $\displaystyle \frac{1}{1-{\pr{1}}}$ &
    $\displaystyle \frac{1}{1-\pr{1}}$
  \\
 $\sgn$ &
    $\displaystyle \frac{1}{1-{\pr{1}}}$ &
    $\displaystyle \frac{1}{1-{\pr{1}}}$ &
    $\displaystyle \frac{1}{1+{\pr{1}}}$ &
    $\displaystyle \frac{1}{1-{\pr{1}}}$ &
    $\displaystyle \frac{1}{1+{\pr{1}}}$ &
    $1$ \\
  $\twod$ &
  $\displaystyle \frac{1}{(1-{\pr{1}})^2}$ &
  $\displaystyle \frac{1}{(1-{\pr{1}})^2}$ &
  $\displaystyle \frac{1}{1+{\pr{2}}}$ & 
  $\displaystyle \frac{1}{1+{\pr{1}}+{\pr{2}}}$ &
  $\displaystyle \frac{1}{1-{\pr{2}}}$ &
  $\displaystyle \frac{1}{1-{\pr{1}}}$
 \\
 $\std$ &
 $\displaystyle \frac{1}{(1-{\pr{1}})^3}$ &
 $\displaystyle \frac{1}{1+{\pr{1}}-{\pr{2}}-{\pr{3}}}$ &
 $\displaystyle \frac{1}{1-{\pr{1}}-{\pr{2}}-{\pr{3}}}$ &
 $\displaystyle \frac{1}{1-{\pr{3}}}$ &
 $\displaystyle \frac{1}{1+{\pr{1}}+{\pr{2}}+{\pr{3}}}$ &
 $1$
\\
 $ \std \otimes \sgn $ & 
 $\displaystyle \frac{1}{(1-{\pr{1}})^3}$ &
  $\displaystyle \frac{1}{1+{\pr{1}}-{\pr{2}}-{\pr{3}}}$ &
  $\displaystyle \frac{1}{1+{\pr{1}}-{\pr{2}}+{\pr{3}}}$ & 
  $\displaystyle \frac{1}{1-{\pr{3}}}$ &
  $\displaystyle \frac{1}{1-{\pr{1}}+{\pr{2}}-{\pr{3}}}$ &
  $\displaystyle \frac{1}{1+{\pr{1}}}$
    \\
    \end{tabular}
\end{adjustbox}

We compute each characteristic polynomial $f_{\rho, p}(x)$ from the character table using the relationship between coefficients $e_i(M)$ of $\charpoly(M) = 1 - e_1(M) x + e_2(M) x^2 + \cdots$ of a matrix $M$ and traces of powers of $M$ that one can derive from Newton's identities.
For example, with $M = \rho(g)$ for $g$ any choice of $\Frob_p$ with $p \in \CCC_4$ and $\rho = \std$, we have~$g, g^3 \in G_4$ and $g^2 \in G_2^\QR$, so that $\tr M^i = -1$ for $i = 1,2,3$. It follows that 
\begin{align*}e_1(M) &= \tr M = -1, \\
e_2(M) &= \frac{1}{2}(\tr M)^2 - \tr M^2) = \frac{1}{2}\big((-1)^2 - (-1)\big) = 1, \text{ and }\\ 
e_3(M) &= \frac{1}{6}\big((\tr M)^3 \!- 3(\tr M^2) (\tr M) + 2 \tr M^3\big) = \frac{1}{6}\big((-1)^3 \! - 3(-1) (-1) + 2 (-1)\big) = -1.
\end{align*}
In each case, the first equality is a general formula.

The table also includes the local factor at $\ell = 59$. For each representation $(\rho, V)$, we compute the characteristic polynomial of the action of $\rho(\Frob_p)$ on $V^{I_{59}}$, the subspace of~$V$ fixed by the at-59 inertia (which is a priori only defined up to conjugation). Since the ideal $(59)$ factors in $\OO_L$ into $\pp_1^4 \pp_2^4 \pp_3^4$, where each $\pp_i$ has residue degree $2$ (computed in \texttt{Sage}  \cite{SAGE} or \texttt{Pari/GP} \cite{PARI2}), we see that the decomposition groups $D_{\pp_i}$ are the three conjugate copies of $D_4$ sitting as the $2$-Sylow subgroups inside $\SS_4$. To fix ideas, we may assume that $D_{59} = \{e, (1234), (13)(24), (1432), (13), (24), (14)(23), (12)(34)\}$. Since the inertia is tame, it is cyclic, so that $I_{59} = \{e, (1234), (13)(24), (1432)\}$. The complement of $I_{59}$ in $D_{59}$ is the coset of Frobenius elements. Now we  compute the local factor for each~$\rho.$ 
\begin{itemize}[itemsep = 3pt, topsep = -3pt, left=5em] 
\item[\fbox{$\rho = \one$}] We have $V^{I_{59}}$ one-dimensional. Frobenius acts trivially. 
\item[\fbox{$\rho = \sgn$}] We have $V^{I_{59}} = \{0\}$, as $G_4$ acts nontrivially. 
\item[\fbox{$\rho = \twod$}] We have $\dim V^{I_{59}} = 1.$ Indeed, $\rho$ is faithful on the $\SS_3$ quotient of $\SS_4$ with the $(2, 2)$-cycles acting trivially, so that the $G_4$ elements in $I_{59}$ act by a flip about a line fixed by $I_{59}$. Frobenius acts trivially.
\item[\fbox{$\rho = \std$}] We have $V^{I_{59}} = \{0\}$: indeed, the characteristic polynomial \mbox{$x^3 + x^2 + x + 1$} of the~$G_4$ elements does not have $1$ as a root.  
\item[\fbox{$\rho = \std \otimes \sgn$}] We have $\dim V^{I_{59}} = 1$: if $b_1, b_2, b_3, b_4$ is the index basis for the permutation representation of $\SS_4$, and $\{b_1 - b_2, b_2 - b_3, b_3 - b_4\}$ is the basis for the standard subrepresentation, then $V^{I_{59}}$ (as fixed above) is spanned by $b_1 - b_2 + b_3 - b_4$, and Frobenius acts by $-1$. 
\end{itemize}

\subsection{Expressing \texorpdfstring{$F$}{F} in terms of Artin \texorpdfstring{$L$}{L}-functions}\label{sec:f3toartin}

In contrast to the previous case, where we were able to identify the Artin $L$-function contribution directly, now we need to follow a more systematic approach to express \( F(s) \) via the 5 distinct Artin $L$-functions above (up to a factor absolutely convergent around \( s = 1 \)).

Each of the sets of primes \( \CCC_1, \CCC_2^{\mathrm{QR}}, \CCC_2^{\mathrm{NR}}, \CCC_3, \CCC_4 \) has positive natural density, so to guarantee convergence of an Euler product of the form
\[
    \prod_{p \in \CCC^\ast_j} \Big( 1 + \frac{b_1}{p^s} + \frac{b_2}{p^{2s}} + O({p^{-3s}}) \Big) \,,
\]
at \( s = 1 \), it suffices to require \( b_1 = 0 \).  Note also that
\begin{equation}\label{eqn:plinear}
    \Big( 1 + \frac{b_1}{p^s} + \frac{b_2}{p^{2s}} + O({p^{-3s}}) \Big) \cdot \Big( 1 + \frac{b'_1}{p^s} + \frac{b'_2}{p^{2s}} + O(p^{-3s}) \Big) = 1 + \frac{b_1 + b'_1}{p^s} + O(p^{-2s}) \,,
\end{equation}
so that the condition on the \( p^{-s} \)-coefficient is linear, when multiplying the local factor at $p$ of \( L \)-functions with an Euler product.  Our goal is therefore to find exponents \( k_0,k_1,\ldots,k_5 \) associated to \( F \) and each irreducible representation \( \rho_1 = \one,\) $\rho_2 = \sgn$, $\rho_3 = \twod$, $\rho_4 = \std,$ $\rho_5 = \std \otimes \sgn $, so that in
\[
    F(s)^{k_0} \prod_{i=1}^5 L(\rho_i, s)^{k_i} = \prod_j \prod_{p \in \CCC_j^\ast} \Big( 1 + b_{j1}\,p^{-s} + O(p^{-2s}) \Big) \,,
\]
the coefficient $b_{j1}$ of \( \frac{1}{p^s} \) vanishes in every factor, i.e. $b_{j1}=0$.  The right-hand side will then define the absolutely convergent (in $s=1$) factor $H(s)$ that we seek.

Considering first the set \( \CCC_1 \), we see that in \( F(s) \), this factor is
\[
    F(s) \colon \quad \prod_{p\in \CCC_1} \frac{1 - p^{-58s}}{(1 - p^{-s})(1 - p^{-59s})} = \prod_{p\in C_1} \Big( 1 + p^{-s} + O(p^{-2s})  \Big) \,.
\]
Likewise, in each of \( L_\Q(\rho_i, s) \), we read off the factor of \( \prod_{p\in \CCC_1} \) from the first column of the table in \cref{sec:59:char}.
\begin{align*}
    L(\rho_1, s) = L(\one, s) & \colon \quad \prod_{p\in \CCC_1} (1-p^{-s})^{-1} = \prod_{p\in \CCC_1} \big( 1 + p^{-s} + O(p^{-2s}) \big) \,, \\
    L(\rho_2, s) = L(\sgn, s) & \colon \quad \prod_{p\in \CCC_1} (1-p^{-s})^{-1} = \prod_{p\in \CCC_1} \big( 1 + p^{-s} + O(p^{-2s}) \big) \,, \\
    L(\rho_3, s) = L(\twod, s) & \colon \quad \prod_{p\in \CCC_1} (1-p^{-s})^{-2} = \prod_{p\in \CCC_1} \big( 1 +2p^{-s} + O(p^{-2s}) \big) \,, \\
    L(\rho_4, s) = L(\std, s) & \colon \quad \prod_{p\in \CCC_1} (1-p^{-s})^{-3} = \prod_{p\in \CCC_1} \big( 1 + 3p^{-s} + O(p^{-2s}) \big) \,, \\
    L(\rho_5, s) = L(\std \otimes \sgn, s) & \colon \quad \prod_{p\in \CCC_1} (1-p^{-s})^{-3} = \prod_{p\in \CCC_1} \big( 1 + 3p^{-s} + O(p^{-2s}) \big) \,.
\end{align*} 
Note that the coefficient of \( p^{-s} \) is exactly the value of the character \( \chi_{\rho_i} \) on the conjugacy class \( \CCC_1 \).  This is not a coincidence: the local factor of $L(\rho, s)$ at a good prime $p$ is $$\frac{1}{\charpoly(\Frob_p|\rho)(p^{-s})} = \frac{1}{1 - \tr(\Frob_p| \rho)/p^{s} + O(p^{-2s}) } = 1 + \frac{\chi_\rho(\Frob_p)}{p^{s}} + O(p^{-2s}).$$

For the factor \( \prod_{p \in \CCC_1} \), we need to impose the condition
\[
    k_0 \cdot 1 + k_1 \cdot 1 + k_2 \cdot 1 + k_3 \cdot 2 + k_4 \cdot 3 + k_5 \cdot 5 = 0 \,,
\]
to force it to converge when \( s = 1 \).

Continuing for the subsequent prime classes, we obtain the following system of equations
\[
\left(\!\!
\begin{array}{c|ccccc}
 1 & 1 & 1 & 2 & 3 & 3 \\
 0 & 1 & 1 & 2 & -1 & -1 \\
 0 & 1 & -1 & 0 & 1 & -1 \\
 1 & 1 & 1 & -1 & 0 & 0 \\
 1 & 1 & -1 & 0 & -1 & 1 \\
\end{array}\!\!
\right) \cdot
\left(\!\!
\begin{array}{c}
k_0 \\ 
k_1 \\
 k_2 \\
 k_3 \\
 k_4 \\
 k_5
\end{array}\!\!
\right) = \uline{\mathbf{0}} \,.
\]
This system has a 1-dimensional solution, spanned by
\[
    \big(\,\,\begin{matrix} 8 & -5 & -1 & 2 & 1 & -3 \end{matrix}\,\,\big)^\top \,.
\]
This tells us that
\[
H(s) \coloneqq 
F(s)^8 
L_\Q(\one, s)^{-5}
L_\Q(\sgn, s)^{-1} 
L_\Q(\twod, s)^{2} 
L_\Q(\std, s)^1
L_\Q(\std \otimes \sgn, s)^{-3} 
\]
is convergent as \( s \to 1 \).  

Since \( L_\Q(\one, s) = \zeta(s) \), we obtain the following
\begin{equation}\label{eqn:ls3:artin}
    F(s)^8 = \begin{aligned}[t]
    & \frac{\zeta(s)^5 L_\Q(\sgn, s) L_\Q(\std \otimes \sgn, s)^3}{L_\Q(\twod , s)^2 L_\Q(\std, s)} \cdot H(s)
    \end{aligned},
\end{equation}
where
\begin{equation}
    \label{G3s}
    H(s) = \begin{aligned}[t]
    & (1 - 59^{-2s})^3 
    \prod_{p \in \mathcal{C}_1} \frac{(1 - p^{-58s})^8}{(1 - p^{-59s})^8}
                \cdot \prod_{p \in \mathcal{C}_2}  \frac{1}{(1 - p^{-2s})^4} \\
    &
                \cdot \prod_{p \in \mathcal{C}_3} \frac{(1 - p^{-2s})^8}{(1 - p^{-3s})^8}
                \cdot \prod_{p \in \mathcal{C}_4}  \frac{(1 - p^{-3s})^8}{(1 - p^{-2s})^4 (1 - p^{-4s})^6}
    \end{aligned}
\end{equation}
is absolutely convergent for \( \Re(s) > 1/2 \).  This 
shows that \( \delta = 5/8  \) in this case.

\subsection{Reformulation in terms of Dedekind \texorpdfstring{$\zeta$}{zeta}-functions}

Following \cite{Antwerp}, we recall the tower of fixed fields 
\(
    \Q \subset K = \Q(\sqrt{-59}) \subset M \subset L 
\),
corresponding to the subgroups
\(
    \mathfrak{S}_4 \supset \mathfrak{A}_4 \supset V \supset \{ e \} 
\).  Here \( L \) is the splitting field of \( X^4 + X^3 - 7X^2 - 11X + 3 \), as above, and \( M \) is the splitting field of \( X^3 + 2X - 1 \).  
Denote by \( E \) the field with defining polynomial \( X^4 + X^3 - 7X^2 - 11X + 3 \); one sees that \( E = K^{\mathfrak{S}_3} \) is the fixed field corresponding to a copy of  \( \mathfrak{S}_3 \) inside $\mathfrak S_4$.  
\begin{center}
    \begin{tikzpicture}[scale=0.7]

    \node (L) at (0,6) {$L$};
    \node (M) at (0,4) {$M\mathrlap{{} = L^V}$};
    \node (K) at (0,2) {$K\mathrlap{{} = \Q(\sqrt{-59}) = L^{\mathfrak A_4}}$};
    \node (Q) at (0,0) {$\Q$};
    \node (E) at (-3,3) {$\mathllap{L^{\mathfrak{S}_3} = {}} E$};
    
    \draw($(L.south) + (0,-2pt)$)--($(M.north) + (0,2pt)$);
    \draw($(M.south) + (0,-2pt)$)--($(K.north) + (0,2pt)$);
    \draw($(K.south) + (0,-2pt)$)--($(Q.north) + (0,2pt)$);
    \draw($(L.south west) + (0,0pt)$)--($(E.north) + (0,0pt)$);
    \draw($(E.south) + (0,-2pt)$)--($(Q.north west) + (0,0pt)$);

    \end{tikzpicture}
    \end{center}

The following table summarises this, and gives the defining polynomials of these fields.  We can use the Dedekind $\zeta$-functions of these fields to reëxpress the Artin $L$-functions we found above. 
\begin{center}
\begin{tabular}{c|c|c|c}
    Degree & Field name & Subgroup & Defining polynomial \\ \hline
    1 & \( \QQ \) & ${\mathfrak S}_4$ & $X$ \\
    2 & \( K = \QQ(\sqrt{-59}) \) & $\mathfrak A_4$ & $X^2 + 59$ \\ 4 & \( E = K^{{\mathfrak S}_3} \) & ${\mathfrak S}_3$ & $X^4 + X^3 - 7X^2 - 11X + 3$ \\
    6 & \( M = L^V \) & $V$ & $
    X^6 + 12X^4 + 36X^2 + 59$ \\
    24 & $L$ & $\{e\}$ & $
    f_L(X)$ (as in \eqref{eqn:fLpol} above)
\end{tabular}
\end{center}

It is straight-forward (using Frobenius reciprocity, say) to check that
\begin{align*}
    \Ind_{\mathfrak{S}_4}^{\mathfrak{S}_4} \one &= \one \\
    \Ind_{A_4}^{{\mathfrak{S}_4}} \one &= \one + \sgn \\
    \Ind_{\mathfrak{S}_3}^{\mathfrak{S}_4} \one  &= \one + \std \\
    \Ind_{V}^{\mathfrak{S}_4}  \one &= \one + \sgn + 2 \cdot \twod \\
    \Ind_{\{e\}}^{\mathfrak{S}_4}  \one &= \mathrm{regular} = \one + \sgn + 2 \cdot \twod + 3 \cdot \std + 3 \cdot (\std \otimes \mathrm{sgn})
\end{align*}
Hence
\begin{align*}
    \one &= \Ind_{\mathfrak{S}_4}^{\mathfrak{S}_4} \one \\
    \sgn &= \Ind_{A_4}^{\mathfrak{S}_4} \one - \Ind_{\mathfrak{S}_4}^{\mathfrak{S}_4} \one \\
    \mathrm{std} &= \Ind_{\mathfrak{S}_3}^{\mathfrak{S}_4} \one - \Ind_{\mathfrak{S}_4}^{\mathfrak{S}_4} \one \\
    \twod &= \frac{1}{2} \Ind_{V}^{\mathfrak{S}_4} \one - \frac{1}{2} \Ind_{A_4}^{\mathfrak{S}_4} \one \\
    \std \otimes \sgn &= \frac{1}{3} \Ind_{\{e\}}^{\mathfrak{S}_4} \one - \frac{1}{3} \Ind_{V}^{\mathfrak{S}_4} - \Ind_{\mathfrak{S}_3}^{\mathfrak{S}_4} \one + \Ind_{\mathfrak{S}_4}^{\mathfrak{S}_4} \one
\end{align*}
Using the properties of Artin $L$-functions from \cref{sec:Artin},  
we immediately have the following expressions of the Artin $L$-functions via Dedekind $\zeta$-functions of intermediate fields of \( L/\Q \): 
\begin{align*}
     L_\QQ(\one, s) &= \zeta(s) \\
     L_\QQ(\sgn, s) &= \zeta_{K}(s)/\zeta(s) \\
     L_\QQ(\twod, s)^2 &= \zeta_M(s)/\zeta_K(s) \\
     L_\QQ(\std, s) &= \zeta_E(s) / \zeta(s) \\
     L_\QQ(\std \otimes \sgn, s)^3 &= \frac{\zeta(s)^3 \zeta_L(s)}{\zeta_E(s)^3 \zeta_M(s)} \,.
\end{align*}

Substituting these identities into \eqref{eqn:ls3:artin}, we alternatively obtain
\begin{equation}\label{eqn:ls3:dedekind}
    F_{S}(s)^8 = \frac{\zeta_{K}(s)^2 \zeta_{L}(s) \zeta(s)^8}{\zeta_{E}(s)^4 \zeta_{M}(s)^2} \cdot H(s),
\end{equation}
with $H(s)$ as in \eqref{G3s}.

	\begin{rk}
		The fields in the above table are not those which \cref{dedekindzetamain} envisages.  \cref{dedekindzetamain} implies that we can take number fields \( L_i \subset K \) with \( \Gal(K/L_i) \) cyclic.  The cyclic subgroups (up to conjugacy, giving isomorphic fixed fields) in \( \mathfrak{S}_4 \) are generated by the identity \( e \), a two-cycle \( (12) \), a double transposition \( (12)(34) \), a three-cycle \( (123) \) or a 4-cycle \( (1234) \), giving subgroups of order \( 1, 2, 2, 3 \) and \( 4 \) respectively.  This gives fields of degree \( 24, 12, 12, 8 \) and \( 6 \) respectively; the Dedekind $\zeta$-functions of these fields can be used to express the Artin $L$-functions in \eqref{eqn:ls3:artin}.  This follows in particular from following inductions (checked using Frobenius reciprocity)
		\begin{align*}
		    \one &= \phantom{{}+} \tfrac{1}{2} \Ind_{(1234)}^{\mathfrak{S}_4} \one +  \tfrac{1}{2}  \Ind_{(123)}^{\mathfrak{S}_4} \one +  \tfrac{1}{2}  \Ind_{(12)}^{\mathfrak{S}_4} \one -  \tfrac{1}{2} \Ind_{e}^{\mathfrak{S}_4} \one \\
		    \sgn &= -\tfrac{1}{2} \Ind_{(1234)}^{\mathfrak{S}_4} \one +  \tfrac{1}{2}  \Ind_{(123)}^{\mathfrak{S}_4} \one -  \tfrac{1}{2}  \Ind_{(12)}^{\mathfrak{S}_4} \one +  \tfrac{1}{2} \Ind_{(12)(34)}^{\mathfrak{S}_4} \one \\
		    \twod & = -\tfrac{1}{2} \Ind_{(123)}^{\mathfrak{S}_4} \one +  \tfrac{1}{2}  \Ind_{(12)(34)}^{\mathfrak{S}_4} \one \\
		    \std & = -\tfrac{1}{2} \Ind_{(1234)}^{\mathfrak{S}_4} \one +  \tfrac{1}{2}  \Ind_{(12)}^{\mathfrak{S}_4} \one \\
		    \std \otimes \sgn & =  \phantom{{}+}  \tfrac{1}{2} \Ind_{(1234)}^{\mathfrak{S}_4} \one -  \tfrac{1}{2}  \Ind_{(12)}^{\mathfrak{S}_4} \one  - \tfrac{1}{2}  \Ind_{(12)(34)}^{\mathfrak{S}_4} \one +   \tfrac{1}{2}  \Ind_{e}^{\mathfrak{S}_4} \one
		\end{align*}
		However, invoking several additional fields of degree $\geq8$ increases the computational complexity superfluously, and moves us away from already (implicitly) defined fields like \( E = K^{\mathfrak{S}_3} \).  We hence prefer the above list.
	\end{rk}

\subsection{Computation of the Euler-Kronecker constant}

Logarithmic differentiation of \eqref{eqn:ls3:dedekind} gives
\[
8 \frac{F'}{F}(s) = 8 \frac{\zeta'}{\zeta}(s) + 2 \frac{\zeta_{K}'}{\zeta_{K}}(s) + \frac{\zeta_L'}{\zeta_L}(s) - 4 \frac{\zeta_E'}{\zeta_E}(s) - 2 \frac{\zeta_M'}{\zeta_M}(s) + \frac{H'}{H}(s) \,,
\]
leading to
\[
    8 \gamma_{S} = 8 \gamma + 2 \gamma_{K} + \gamma_L - 4 \gamma_E - 2 \gamma_M + \frac{H'}{H}(1) \,.
\]
The Ihara bounds \eqref{eqn:ihara} with \( x = 10^8 \) give
\begin{align*}
    \gamma_{K} &\in (0.04396032\ldots, 0.04422743\ldots), \\
    \gamma_{L} &\in (2.02971392\ldots, 2.03401336\ldots), \\
    \gamma_{E} &\in (0.83356048\ldots, 0.83453128\ldots), \\
    \gamma_{M} &\in (1.33846236\ldots, 1.33894626\ldots), \\
\end{align*}
For the quadratic field \( K = \Q(\sqrt{-59})\), 
a much more accurate and unconditional result is available \cite{ibid}, namely
$\gamma_{K}  = 0.044084767022407053675495516436691\ldots$

Logarithmic differentiation of \( H(s) \) leads to
\[
    \frac{H'}{H}(1) =
    \begin{aligned}[t]
    & \frac{6 \log 59}{59^2-1}
    + \sum_{p \in \mathcal{C}_1} \Big( \frac{8\cdot58 }{p^{58}-1} - \frac{8 \cdot 59}{p^{59}-1} \Big) \log p
    - \sum_{p \in \mathcal{C}_2^{QR} \cup \mathcal{C}_2^{NR}}  \frac{8 \log p }{p^2-1} \\
    & + \sum_{p \in \mathcal{C}_3} \Big( \frac{16 }{p^{2}-1} - \frac{24}{p^{3}-1} \Big) \log p
    + \sum_{p \in \mathcal{C}_4} \Big( - \frac{8 }{p^{2}-1} + \frac{24}{p^{3}-1} - \frac{24}{p^4 - 1} \Big) \log p.
    \end{aligned}
\]
Explicitly summing the contribution of the primes \( p \leq 10^8 \), and estimating the tail via \eqref{trivialtail} gives
\begin{equation}\label{eqn:G3poG3at1}
    \frac{H'}{H}(1) \in (1.64466270\ldots, 1.64466337\ldots) \,.
\end{equation}
Overall, we find $\gamma_{S} \in (0.29553172\ldots, 0.29667561\ldots)$ and so 
$$\gamma_{S} = 0.29\ldots $$ is correct up to 2 digits (even just using the Ihara bounds \eqref{eqn:ihara} for \( K = \Q(\sqrt{-59}) \)).

\begin{rk}[Increased precision from heuristic \texttt{Pari/GP} calculations of \( \gamma_L, \gamma_E \) and \( \gamma_{M} \)]
One can apply the approach from \cref{app:higherprec} to calcualte \(  \gamma_L, \gamma_E \) and \( \gamma_M \) to higher precision using \texttt{Pari/GP}'s numerical $L$-function routines.
One obtains
\begin{align*}
    \gamma_L  \overset{?}{=} 0.0314402432715970302501248\ldots \,, \\
    \gamma_E \overset{?}{=} 0.8339340133907066887364345\ldots \,, \\
    \gamma_M  \overset{?}{=} 0.3387127640443338396608939\ldots \,. \\
\end{align*}
From these and the estimate on \( \frac{H'}{H}(1) \) from \eqref{eqn:G3poG3at1}, we obtain
\[
    \gamma_{S} \overset{?}{\in} ( 0.29610452\ldots , 0.29610461\ldots ) \,,
\]
Hence
\[
    \gamma_{S} \overset{?}{=} 0.29(6104)\ldots
\]
with 4 extra \emph{uncertified} digits.
\end{rk}

\subsection{Calculation of \texorpdfstring{\( C_{S} \)}{C\_S}}   Application of \eqref{residu}, with \( \delta = 5/8 \) to \eqref{eqn:ls3:dedekind} gives
\[
     C_{S_3} = \frac{1}{\Gamma(1/3)} \frac{(\Res_{s=1} \zeta_{K}(s))^{1/4} (\Res_{s=1} \zeta_{L}(s))^{1/8}}{(\Res_{s=1} \zeta_{E}(s))^{1/2} (\Res_{s=1} \zeta_{M}(s))^{1/4}} \cdot H(1)^{1/8}\,.
\]
From the analytic class number formula, we have
\begin{align*}
    \Res_{s=1} \zeta_{K}(s) & = \frac{3\pi}{\sqrt{59}} \\
    \Res_{s=1} \zeta_{E}(s) &= \frac{4\pi}{\sqrt{59^3}} \Reg_E \,, \quad \Reg_E  = 20.5225859936222504507764283\ldots \\
    \Res_{s=1} \zeta_{M}(s) &= \frac{4 \pi^3}{\sqrt{59^3}} \Reg_M \,, \quad \Reg_M = 1.87697523076402072994449005\ldots \\
    \Res_{s=1} \zeta_{L}(s) &= \frac{2^{12} \pi^{12}}{59^9} \Reg_L \,, \quad \Reg_L =  1235592.8728873038384079887260792 \ldots \\
\end{align*}
The regulators were computed using the \texttt{bnfinit} routine from \texttt{Pari/GP}, and certified using \texttt{bnfcertify} to remove the assumption of GRH used in fast algorithms for computing fundamental units and class groups.  For details see \cref{app:reg}.

Hence
\[
    C_{S} = \frac{1}{\Gamma(5/8)} \frac{3^{1/4} \sqrt{\pi}}{(2\cdot59)^{1/8}} \frac{\Reg_L^{1/8}}{\Reg_E^{1/2} \Reg_M^{1/4}} \cdot H(1)^{1/8}.
\]
As before, estimation of \( \log H(1) \), using \eqref{eqn:log1m1ontail} with \( p \leq 10^8 \) gives
\begin{align*}
    \log H(1) &\in (-0.52586694\ldots, -0.52586662\ldots) \\
     H(1) &\in (0.59104274\ldots, 0.59104293\ldots) 
\end{align*}
Hence $C_{S} \in (0.91331343\ldots, 0.91331347\ldots)$, and so
$$C_{S} = 0.9133134\ldots,$$ correct to 7 digits.

\section{General theorem} 
\label{generalsection}

The following theorem is our main theoretical result.

We first set up some notation. Let $f = \sum_n a_n(f)q^n$ be a normalized eigenform of some weight and level, with Hecke eigenfield $K := \QQ(\{a_n(f): n\})$. Let $\lambda$ be a prime of $K$, and $v_\lambda$ the corresponding valuation. Let $$S := S_{f, \lam} := \{n \geq 1: v_\lam\bigl(a_n(f)\Bigr) = 0\}.$$ Then $S$ is a multiplicative set, so that the Dirichlet series 
$$F = F_{S}(s) = F_{f, \lam}(s) = \sum_{n \in S} \frac{1}{n^s}$$
has an Euler product expansion. 

\begin{Thm}\label{mainthm}
There is a number field $L$, Galois over $\QQ$, so that $S$ is \emph{$L$-frobenian}: for all but finitely many primes $p$, whether $p$ is in $S$ is determined only by the conjugacy class of $\Frob_p$ in $L/\QQ$. Moreover, $L$ satisfies the following additional properties. 
\begin{enumerate}[ref={\theThm(\roman*)},leftmargin=2em, itemsep = 3pt, topsep = -5pt]
\item \label[theorem]{artinlmain} Let $G = \Gal(L/\QQ)$, and let $\rho_1, \ldots, \rho_g$ be the list of irreducible complex representations of $G$. There is a corresponding sequence of rational numbers $q_1, \ldots, q_g$ so that the Dirichlet series $F(s)$ has the form 
$$F(s) = J(s) \prod_{i = 1}^g L (\rho_i, s)^{q_i}$$
for some nonvanishing holomorphic function $J(s)$ uniformly bounded
on \mbox{$\Re(s)>1-\varepsilon$} for some $\varepsilon>0$.
\item \label[theorem]{dedekindzetamain} There is a finite list of subfields $L_1, \ldots, L_d$ of $L$ and corresponding rational numbers $r_1, \ldots, r_d$, so that 
$$F(s) =  H(s)\prod_{i} \zeta_{L_i}(s)^{r_i}$$
for some nonvanishing holomorphic function $H(s)$ uniformly bounded
on \mbox{$\Re(s)>1-\varepsilon$} for some $\varepsilon>0$.
Moreover, the $L_i$ may be chosen to be  pairwise nonconjugate over~$\QQ$, with $\Gal(L/L_i)$ cyclic. 
\end{enumerate}
\end{Thm}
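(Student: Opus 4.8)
The plan is to build $L$ from the mod-$\lambda$ Galois representation of $f$, read off the local Euler factors of $F$ from Frobenius conjugacy classes, and then get the first assertion from the uniqueness of the decomposition of a class function into irreducible characters, and the second from Artin's induction theorem.

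First I would let $\rhobar=\rhobar_{f,\lambda}\colon\Gal(\overline\QQ/\QQ)\to\GL_2(\overline{\FF_\ell})$ be the semisimple mod-$\lambda$ representation attached to $f$ by Deligne (Deligne--Serre in weight $1$), where $\ell$ lies below $\lambda$; it is unramified outside $N\ell$ ($N$ the level), with $\tr\rhobar(\Frob_p)\equiv a_p(f)$ and $\det\rhobar(\Frob_p)$ a nonzero element of $\FF_\lambda$ for $p\nmid N\ell$. Take $L=\overline\QQ^{\ker\rhobar}$, so $G:=\Gal(L/\QQ)\hookrightarrow\GL_2(\overline{\FF_\ell})$ faithfully and $L/\QQ$ is unramified outside $N\ell$. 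For $p\nmid N\ell$, reducing the Hecke recursion mod $\lambda$ gives $\bar a_{p^m}=(\alpha^{m+1}-\beta^{m+1})/(\alpha-\beta)$ if $\alpha\neq\beta$ and $(m+1)\alpha^m$ if $\alpha=\beta$, where $\alpha,\beta\in\overline{\FF_\ell}^\times$ are the eigenvalues of $\rhobar(\Frob_p)$; hence $\bar a_{p^m}=0$ exactly when $e_{\mathcal C}\mid m+1$, where $e_{\mathcal C}$ is the order of $\alpha/\beta$ (or $\ell$ if $\alpha=\beta$) and $\mathcal C$ is the class of $\Frob_p$. So $S$ is $L$-frobenian, with $p\in S$ iff $e_{[\Frob_p]}\neq 2$ iff $\tr\rhobar(\Frob_p)\neq 0$, and for good $p$
\[
F_p(s)=\sum_{\substack{m\geq 0\\ e_{\mathcal C}\,\nmid\,m+1}}p^{-ms}=\frac{1-p^{-(e_{\mathcal C}-1)s}}{(1-p^{-s})(1-p^{-e_{\mathcal C}s})}=1+c(\mathcal C)\,p^{-s}+O(p^{-2s}),
\]
where $c\colon G\to\{0,1\}$ is the class function $c(g)=\one[\tr\rhobar(g)\neq 0]$. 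The finitely many $F_p$ with $p\mid N\ell$ are rational in $p^{-s}$, hence holomorphic and nonvanishing on $\Re(s)>1-\varepsilon$ once $\varepsilon>0$ is small.

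The key step is then a purely group-theoretic lemma: $c(g^k)=c(g)$ for every $k$ coprime to $|G|$. Indeed $\tr\rhobar(g^k)=\alpha^k+\beta^k$, and since $k$ is coprime to $\ord(\rhobar(g))$, hence to $\ord(\alpha/\beta)$, one has $\alpha^k=\beta^k\iff\alpha=\beta$; a short case check (separating characteristic $2$) then gives $\tr\rhobar(g)=0\iff\tr\rhobar(g^k)=0$. Granting this, write $c=\sum_{i=1}^{g}q_i\chi_{\rho_i}$ over $\CC$, uniquely since the $\chi_{\rho_i}$ are a basis of the class functions; for $\sigma\in\Gal(\overline\QQ/\QQ)$ acting on $\QQ(\zeta_{|G|})$ by $\zeta\mapsto\zeta^{k}$ ($k$ coprime to $|G|$) one has $\sigma(c)=c$ and $\sigma\chi_{\rho_i}=[g\mapsto\chi_{\rho_i}(g^{k})]$, so comparing $c(g)=c(g^{k})$ with the unique expansion forces $\sigma$ to permute the $q_i$ exactly as it permutes the $\rho_i$; each Galois orbit of $q_i$'s is therefore a single $\Gal$-fixed value, so $q_i\in\QQ$. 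Now $J(s):=F(s)\prod_i L(\rho_i,s)^{-q_i}$ has Euler factor $J_p(s)=F_p(s)\prod_iL_p(\rho_i,s)^{-q_i}$; for good $p$ the coefficient of $p^{-s}$ cancels, since that coefficient of $L_p(\rho_i,s)$ is $\chi_{\rho_i}(\Frob_p)$ and $c=\sum q_i\chi_{\rho_i}$, whence $J_p(s)=1+O(p^{-2s})$ uniformly; and for good $p$ each $J_p$ is a finite product of factors $(1-\zeta_jp^{-s})^{\pm q}$ with $|\zeta_j|=1$, hence holomorphic and nonvanishing on $\Re(s)>0$. So $J(s)=\prod_pJ_p(s)$ converges absolutely and uniformly, is bounded, and is bounded away from $0$, on $\Re(s)\geq 1-\varepsilon$ for any fixed $\varepsilon<\tfrac12$; this establishes part (i) with $J=H$.

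For part (ii) I would invoke Artin's induction theorem in the strong form that the $\QQ$-span of the permutation characters $\{\Ind_C^G\one:C\leq G\text{ cyclic}\}$ is precisely the space of $\QQ$-valued class functions constant on rational conjugacy classes. Since $c$ lies there, $c=\sum_C n_C\,\Ind_C^G\one$ with $n_C\in\QQ$, the sum over representatives of the conjugacy classes of cyclic subgroups. Clearing denominators turns $\sum_iq_i\rho_i=\sum_C n_C\,\Ind_C^G\one$ into an identity of genuine virtual characters; applying the $\oplus$-additive Artin $L$-formalism with $L_\QQ(\Ind_C^G\one,s)=\zeta_{L^C}(s)$ gives $\big(\prod_iL(\rho_i,s)^{q_i}\big)^N=\big(\prod_C\zeta_{L^C}(s)^{n_C}\big)^N$ for suitable $N$, and choosing on $\Re(s)>1$ the branch of each side tending to $1$ as $\Re(s)\to+\infty$, then continuing analytically, yields $\prod_iL(\rho_i,s)^{q_i}=\prod_C\zeta_{L^C}(s)^{n_C}$. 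Thus $F(s)=H(s)\prod_C\zeta_{L^C}(s)^{n_C}$ with the same $H=J$; the fields $L_i:=L^{C}$ are pairwise nonconjugate over $\QQ$ (nonconjugate subgroups have nonconjugate fixed fields) and $\Gal(L/L_i)=C$ is cyclic, so $r_i=n_{C_i}$ works. I expect the lemma $c(g^k)=c(g)$ to be the only genuinely nonformal point: it is exactly this rationality of the ``defect'' class function on rational classes---not visible from the Artin side, where a priori $F$ is merely a product of $\CC$-powers of Artin $L$-functions---that forces the exponents $q_i$, hence the $r_i$, to be rational, which is the surprise recorded in \cref{simpleanna}.
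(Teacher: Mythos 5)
Your proof is correct and follows essentially the same route as the paper's: construct $L$ from the mod-$\lambda$ Galois representation, observe that the indicator class function $c = \psi_S$ is rational-valued and constant on power-conjugacy classes (the key lemma), then apply linear algebra over $\QQ(\zeta_{|G|})$ to get rational exponents on Artin $L$-functions, and Artin induction from cyclic subgroups to convert to Dedekind $\zeta$-functions. The one presentational difference worth noting: the paper's key lemma (Lemma~\ref{tracezerolemma}) is proved via Cayley--Hamilton and the trace recurrence for $\{\tr M^k\}_k$, showing that an order-$d$ trace-zero $2\times 2$ matrix in odd characteristic must have $d$ even; you instead argue directly with eigenvalues, reducing $c(g^k)=c(g)$ to the invariance of $\ord(\alpha/\beta)$ under $k$-th powers for $k$ coprime to $|G|$. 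Your version also packages the Euler factor $F_p$ explicitly in terms of $e_{\CCC}=\ord(\alpha/\beta)$, which makes the Frobenian structure immediate, whereas the paper just extracts the $p^{-s}$-coefficient. Both arguments buy the same thing, and your inlined rationality argument (comparing $\sigma(c)=c$ against $c(g)=c(g^k)$ via the unique expansion in irreducible characters) is a compressed but correct version of the paper's \cref{keyrepprop}.
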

By logarithmic differentiation we immediately obtain the following 
\begin{Cor}
$$\gamma_S=\frac{J'}{J}(1)+\sum_{j=1}^g q_j\frac{L'(\rho_j,1)}{L(\rho_j,1)}=\frac{H'}{ H}(1)+\sum_{j=1}^g r_j\frac{\zeta_K'}{\zeta_K}(1),$$
the last expression allowing one to determine $\gamma_S$, under GRH, with a modest numerical precision using the method of
Ihara (\cref{sec:Dedekind}).
\end{Cor}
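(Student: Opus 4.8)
The real content is \cref{mainthm}; the Corollary then falls out by logarithmic differentiation, so I describe my plan for the theorem and return to the Corollary at the end.

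\emph{Choice of $L$ and the $L$-frobenian property.} The plan is to take $L$ to be the field cut out by the mod-$\lambda$ Galois representation. Write $\ell$ for the residue characteristic of $\lambda$ and $N$ for the level. Deligne's construction attaches a semisimple $\rhobar = \rhobar_{f,\lambda}\colon \Gal(\overline{\QQ}/\QQ)\to\GL_2(\overline{\FF_\ell})$, unramified outside $N\ell$, with $\tr\rhobar(\Frob_p)\equiv a_p(f)$ and $\det\rhobar(\Frob_p)\equiv\chi(p)p^{k-1}\pmod{\lambda}$ for $p\nmid N\ell$ ($k$ the weight, $\chi$ the nebentypus). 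Its image is finite; I set $L$ to be the fixed field of $\ker\rhobar$ and $G=\Gal(L/\QQ)$. First I would show that $F_p(s)$ depends only on the class of $\Frob_p$ in $G$: for $p\nmid N\ell$ the Hecke recursion $a_{p^m}=a_p a_{p^{m-1}}-\chi(p)p^{k-1}a_{p^{m-2}}$ (with $a_1=1$) reduces mod $\lambda$ to an expression for $\bar a_{p^m}$ in the eigenvalues $\bar\alpha,\bar\beta$ of $\rhobar(\Frob_p)$ (which are units), namely $\bar a_{p^m}=(\bar\alpha^{m+1}-\bar\beta^{m+1})/(\bar\alpha-\bar\beta)$ when $\bar\alpha\neq\bar\beta$ and $(m+1)\bar\alpha^m$ otherwise; in either case whether $\lambda\mid a_{p^m}$ — hence $i_S(p^m)$, hence the whole Euler factor $F_p(s)=\sum_m i_S(p^m)p^{-ms}$ — is determined by the conjugacy class of $\Frob_p$. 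This already gives that $S$ is $L$-frobenian. For the finitely many $p\mid N\ell$ one checks $\{a_{p^m}\}_m$ is totally multiplicative mod $\lambda$ (for $p\mid N$ since $f$ is a newform; for $p=\ell$ because the $p^{k-1}$-term dies mod $\lambda$), so $F_p(s)$ is $1$ or $(1-p^{-s})^{-1}$, holomorphic and nonvanishing for $\Re(s)>0$, and can be absorbed into the auxiliary factor throughout.

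\emph{Part (i).} Grouping the good Euler factors by conjugacy class $c$ of $G$, the coefficient of $p^{-s}$ in $F_p(s)$ for $p\in c$ is $m_c:=i_S(p)=[\lambda\nmid a_p]$, and by the $m=1$ case above this equals $1-\one_T(c)$, where $T:=\{g\in G:\tr\rhobar(g)=0\}$ is a union of conjugacy classes. The step I regard as the heart of the matter is that $T$ is a \emph{rational} union of classes, i.e. stable under $g\mapsto g^a$ for $\gcd(a,|G|)=1$: because $\rhobar$ is two-dimensional, $\tr\rhobar(g)=0$ forces the eigenvalues to be $\{\beta,-\beta\}$ (or a repeated eigenvalue, if $\ell=2$), and an exponent $a$ coprime to $|G|$ is necessarily odd when $|G|$ is even, so $g^a$ again has trace zero, while if $|G|$ is odd one checks $T=\varnothing$. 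A rational union of conjugacy classes satisfies $\one_T\in\QQ\otimes R(G)$ — each inner product $\langle\one_T,\chi\rangle$ is invariant under $\Gal(\QQ(\zeta_{|G|})/\QQ)$ (substitute $h\mapsto h^a$) and hence rational — so $m_c=\sum_i q_i\,\chi_{\rho_i}(c)$ for suitable $q_i\in\QQ$. I then set $J(s):=F(s)\prod_i L(\rho_i,s)^{-q_i}$; comparing Euler products on $\Re(s)>1$, each good local factor becomes $\bigl(1+m_c p^{-s}+O(p^{-2s})\bigr)\bigl(1-\sum_i q_i\chi_{\rho_i}(c)p^{-s}+O(p^{-2s})\bigr)=1+O(p^{-2s})$, so $\log J$ converges absolutely and locally uniformly on $\Re(s)>\tfrac12$, whence $J$ is holomorphic, zero-free and uniformly bounded on $\Re(s)>1-\varepsilon$ for any $\varepsilon<\tfrac12$, and $F(s)=J(s)\prod_i L(\rho_i,s)^{q_i}$. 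This proves \cref{artinlmain}.

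\emph{Part (ii).} The virtual character $\phi:=\sum_i q_i\rho_i$ has character $\one_G-\one_T$, which is $\QQ$-valued. Here I would invoke the group-theoretic fact that every $\QQ$-valued virtual character of a finite group is a $\QQ$-linear combination of the $\Ind_C^G\one$ with $C\le G$ cyclic: by Artin's induction theorem $\phi=\sum a_i\Ind_{C_i}^G\xi_i$ with $a_i\in\QQ$, $C_i$ cyclic and $\xi_i\in R(C_i)$; averaging over $\Gal(\QQ(\zeta_{|G|})/\QQ)$ and using that $\phi$ is $\QQ$-valued replaces each $\xi_i$ by a $\QQ$-valued virtual character of $C_i$; and for a cyclic group the permutation characters $\Ind_{C'}^C\one$ are exactly the divisor-sums of the Galois-orbit sums of the characters of $C$, which by Möbius inversion span all $\QQ$-valued virtual characters of $C$ — transitivity of induction then finishes. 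Applied to $\phi$ this gives $\phi=\sum_j r_j\Ind_{C_j}^G\one$ with $r_j\in\QQ$, $C_j$ cyclic. Putting $L_j:=L^{C_j}$, so $\zeta_{L_j}(s)=L(\Ind_{C_j}^G\one,s)$, additivity of Artin $L$-functions gives $\prod_i L(\rho_i,s)^{q_i}=L(\phi,s)=\prod_j\zeta_{L_j}(s)^{r_j}$, hence $F(s)=H(s)\prod_j\zeta_{L_j}(s)^{r_j}$ with $H:=J$; each $\Gal(L/L_j)=C_j$ is cyclic, and merging conjugate $C_j$'s (whose fixed fields share a Dedekind zeta) makes the $L_j$ pairwise nonconjugate. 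This proves \cref{dedekindzetamain}, and \cref{simpleanna} is the special case.

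\emph{Main obstacle and the Corollary.} I expect the fussiest part to be the group-theoretic lemma in Part (ii) — pinning down the precise form of Artin's induction theorem and handling the cyclic base case cleanly; the rationality of $T$ in Part (i) is a one-line observation, but it is the whole reason the theorem holds (and genuinely fails for higher-dimensional representations in general), so care is needed there too. Deligne's construction, the Hecke recursion, and the Euler-product bookkeeping are routine. Finally the Corollary follows by logarithmic differentiation of the two factorizations: taking $F'/F$ of $F=J\prod_i L(\rho_i,\cdot)^{q_i}$ and of $F=H\prod_j\zeta_{L_j}^{r_j}$ and letting $s\to1^+$ yields the two displayed expressions for $\gamma_S$, where matching the $\tfrac1{s-1}$ parts uses that the pole orders agree ($\sum_j r_j=\delta_S$, resp. the $\zeta(s)$-exponent equals $\delta_S$) together with \eqref{define} and \eqref{EKf}, so that in fact $\gamma_S=\frac{H'}{H}(1)+\sum_j r_j\gamma_{L_j}$; and each $\gamma_{L_j}$ can be bounded to arbitrary proven accuracy under GRH for $\zeta_{L_j}$ by \cref{prop:ihara}.
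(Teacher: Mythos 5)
Your proposal is correct and takes essentially the paper's route: the Corollary itself is obtained exactly as in the paper, by logarithmic differentiation of the two factorizations in \cref{mainthm} (matching the pole via $q_1=\sum_j r_j=\delta_S$, so that the limit defining $\gamma_S$ exists and equals $\frac{H'}{H}(1)+\sum_j r_j\gamma_{L_j}$, each $\gamma_{L_j}$ then being computable under GRH by \cref{prop:ihara}), and your reconstruction of \cref{mainthm} mirrors the paper's proof: Deligne's mod-$\lambda$ representation and the Hecke recursion give the $L$-frobenian structure, stability of the trace-zero set under $g\mapsto g^a$ (the paper's \cref{tracezerolemma}) gives rationality of the class function and hence the Artin-$L$ factorization, and an Artin-induction argument (the paper's \cref{keyrepprop2}, which you prove by Galois averaging and Möbius inversion on cyclic groups rather than the paper's power-conjugacy triangularity — a cosmetic difference) gives the Dedekind-$\zeta$ form. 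One small slip: your claim that $T=\varnothing$ when $|G|$ is odd fails in residue characteristic $2$ (the identity already has trace $2=0$ there), but this does not hurt the argument, since your repeated-eigenvalue observation shows $\tr\rhobar(g^a)=0$ for \emph{every} exponent $a$ in characteristic $2$, which patches that branch of the case split immediately.
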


\begin{proof}[Proof of \cref{mainthm}]
First of all, since $S$ is a  multiplicative set, $F(s)$ has an Euler product \eqref{eulerpro}.
As is typical in these situations, it is the function $i_S(p)$ that determines the behavior of $F(s)$ as $s \to 1$. Indeed, we can write
$$F(s)=\prod_p\left(1+\frac{i_S(p)}{p^s}+\frac{i_S(p^2)}{p^{2s}}+O(p^{-3s})\right)$$
We now give a Galois-theoretic interpretation of the function $i_S(p)$, and use this to deduce the appropriate combination of Artin $L$-functions which will differ multiplicatively from \( F(s) \) by an absolutely convergent factor. Indeed, it is enough to ensure that, except for possibly finitiely many primes $p$, the local Euler factor of this combination equals $1+i_S(p)\,p^{-s}+O(p^{-2s})$.

Let $\FF$ be the field~$\OO_K/\lam$. 
By a theorem of Deligne, we can attach to $f$ modulo $\lam$ a continuous Galois representation 
$$\rho: \Gal(\overline\QQ/\QQ) \to \GL_2(\FF),$$
which is unramified outside some finite set $T$ of primes and so that for every $p \not \in T$ we have 
$\tr \rho(\Frob_p) = \bar a_p(f)$, where $\bar a_p(f)$ is the image of $a_p(f)$ in $\FF$. Of course $i_S(p) = 1$ if and only 
if $\bar a_p(f) \neq 0$.

Let~$L$ be the fixed field of $\ker \rho$, so that $\rho$ factors through $G := \Gal(L/\QQ).$\footnote{Alternatively, 
we can let $L$ be the smaller field fixed by the kernel of the projective representation $\mathbb P \rho: \Gal(\overline \QQ/\QQ) \to \PGL_2(\FF)$, and still set $G := \Gal(L/\QQ).$} By the Chebotarev density theorem, Frobenius conjugacy classes are equidistributed in $G$. In other words, there is class function\footnote{Recall that a \emph{class function} on finite group is $G$ is a map $G \to \CC$ that is constant on conjugacy classes.
} 
$\psi = \psi_S: G \to \{0, 1\} \subset \QQ$ defined by \mbox{$\psi(\Frob_p) = i_S(p)$} for all $p \not\in T$. 
(The existence of $\psi$ is another way of saying that $S$ is $L$-frobenian.)

Let $\Lambda := \{\lambda_1, \ldots, \lambda_g\}$ be an ordered list of the class functions that are characters of  irreducible (complex) representations of $G$: that is $\lambda_i = \tr \rho_i$.
We claim that $\psi$ is in the $\QQ$-span of $\Lambda$. By \Cref{keyrepprop} below, it suffices to show that $\psi(g) = \psi(g^a)$ for any $a$ prime to the order of $G$. Unraveling the definitions, it suffices to verify that if $M$ is an order-$d$ matrix in $\GL_2(\FF)$, then $\tr M = 0$ implies that $\tr M^a = 0$ for any $a$ prime to~$d$. But that is precisely \cref{tracezerolemma}; see \cref{lemma78section} below.

Hence we have rational numbers $q_1,\ldots, q_g$ so that 
\begin{equation}\label{eqn:psi}
\psi = q_1 \lambda_1 + \cdots + q_g \lambda_g.
\end{equation}

As in \eqref{eqn:plinear}, observe that the coefficient of \( p^{-s} \) behaves linearly, when multiplying the local factors at \( p \) of  $L$-functions which have an Euler product, namely:
\[
    \Big( 1 + \frac{b_1}{p^s} + \frac{b_2}{p^{2s}} + O({p^{-3s}}) \Big) \cdot \Big( 1 + \frac{b'_1}{p^s} + \frac{b'_2}{p^{2s}} + O(p^{-3s}) \Big) = 1 + \frac{b_1 + b'_1}{p^s} + O(p^{-2s}) \,. 
\]
Recall also the local factor of the Artin $L$-function $L(\rho_i, s)$ at a good prime $p$ is $$\frac{1}{\charpoly(\Frob_p|\rho_i)(p^{-s})} = \frac{1}{1 - \tr(\Frob_p| \rho_i)/p^{s} + O(p^{-2s}) } = 1 + \frac{\lambda_i(\Frob_p)}{p^{s}} + O(p^{-2s}).$$

Now consider \[
    J(s) = F(s)^{-1} \cdot \prod_{i=1}^g L_\Q(\rho_i, s)^{q_i} .
\]
Fix \( p \not\in T \).  The linearity above, and the fact that \( i_S(p) = \psi(\Frob_p) \), means that the local factor at \( p \) of \( J(s) \) satisfies
\begin{align*}
    J_p(s) &= \Big( 1 + \frac{\psi(\Frob_p) }{p^s} + O(p^{-2s}) \Big)^{-1} \cdot \prod_{i=1}^g \Big( 1 + \frac{\lambda_i(\Frob_p)}{p^s} + O(p^{-2s}) \Big)^{q_i} \\
    &= 1 + \frac{-\psi(\Frob_p)  + \sum_{i=1}^g q_i \lambda_i(\Frob_p)}{p^s} + O(p^{-2s}) 
     = 1 + O(p^{-2s}) \,.
\end{align*}
The vanishing of the \( p^{-s} \) coefficient here follows from \eqref{eqn:psi}.  From this, we deduce that \( \prod_{p\not\in T} J_p(s) \) converges absolutely at \( s = 1 \); this still holds after restoring the finitely many factors \( J_p(s) \), for \( p \in T \).

This completes the proof of \cref{artinlmain}. For \cref{dedekindzetamain} the argument is similar, except that we must show that $\psi$ is a $\QQ$-linear combination of permutation characters on cyclic subgroups (in other words, of inductions of the trivial representation from cyclic subgroups of $G$ up to $G$): see \cref{keyrepprop2}.
\end{proof}

\subsection{Representation-theory lemmas} In this section we recall a few facts from representation theory. Although we include some proofs for ease of reading, everything here is well known to experts. 

Fix $\Gamma$ be a finite group of order~$d$. Let $\Lambda := \{\lambda_1, \ldots, \lambda_g\}$ be the list of the irreducible characters of $\Gamma$, so that $\Gamma$ has~$g$ conjugacy classes. For any subring $R$ of $\CC$, let~$X(\Gamma, R)$ be the set of $R$-valued class functions of $\Gamma$, so that $X(\Gamma, R)$ is a free $R$-module of rank~$g$. Also let $R\Lambda$ be the $R$-linear span of $\Lambda$ inside $X(\Gamma, \CC)$. The following theorem is well known. 
\begin{Thm}[See, for example, Serre {\cite[Theorem 6]{SerreRep}}]\leavevmode
\begin{enumerate}[topsep = -5pt]
\item $\CC\Lambda =  X(\Gamma, \CC).$
\item If $K \subseteq \CC$ is a field, and every $\lambda$ in $\Lambda$ is in $X(\Gamma, K)$, then $K\Lambda = X(\Gamma, K)$.
\end{enumerate}
\end{Thm}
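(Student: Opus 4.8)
The plan is to prove the two parts in turn: part (i) is the standard fact that irreducible characters form a $\CC$-basis of the space of class functions, and part (ii) is a short extension-of-scalars argument layered on top of it.

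For part (i), I would equip $X(\Gamma, \CC)$ with the Hermitian pairing $\langle \phi, \psi\rangle = \tfrac{1}{d}\sum_{\gamma \in \Gamma}\phi(\gamma)\overline{\psi(\gamma)}$ and use the two orthogonality relations. The first, $\langle \lambda_i, \lambda_j\rangle = \delta_{ij}$, is Schur's lemma applied to the $\Gamma$-module $\operatorname{Hom}_\CC(V_i, V_j)$ (whose invariants have dimension $\delta_{ij}$), and it makes $\Lambda$ a $\CC$-linearly independent family. For spanning, I would show that a class function $f$ with $\langle f, \lambda_i\rangle = 0$ for all $i$ vanishes: the element $\sum_\gamma \overline{f(\gamma)}\,\gamma$ of $\CC[\Gamma]$ is $\Gamma$-equivariant because $f$ is a class function, hence acts on each irreducible $V_i$ by a scalar proportional to $\langle f, \lambda_i\rangle = 0$, hence acts by $0$ on the regular representation, forcing $f = 0$. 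Thus $\Lambda$ is an orthonormal basis of $X(\Gamma, \CC)$, so $\CC\Lambda = X(\Gamma, \CC)$ and $g$ equals the number of conjugacy classes. (In the present context this is nothing but \cite[Theorem 6]{SerreRep} and may simply be quoted.)

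For part (ii), I would first record that the indicator functions $\one_{C_1}, \ldots, \one_{C_g}$ of the conjugacy classes lie in $X(\Gamma, \QQ)$, hence in $X(\Gamma, K)$, and form an $R$-basis of $X(\Gamma, R)$ for every subring $R \subseteq \CC$; in particular $X(\Gamma, K)$ is a $g$-dimensional $K$-vector space. Now assume every $\lambda_i$ lies in $X(\Gamma, K)$. The inclusion $K\Lambda \subseteq X(\Gamma, K)$ is immediate, and for the reverse it suffices that $\Lambda$ be $K$-linearly independent; but a $K$-linear relation among the $\lambda_i$ would in particular be a $\CC$-linear relation, which part (i) forbids. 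So $\Lambda$ is a family of $g$ linearly independent vectors inside the $g$-dimensional space $X(\Gamma, K)$, hence a $K$-basis, and $K\Lambda = X(\Gamma, K)$.

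I do not expect any real obstacle: both assertions are classical character theory. The only point requiring a moment's care is the descent "$\CC$-linearly independent $\Rightarrow$ $K$-linearly independent" in part (ii), and that is essentially a triviality since $K \subseteq \CC$ — equivalently, a subset of $X(\Gamma, K)$ that becomes a basis after the field extension $K \hookrightarrow \CC$ was already a $K$-basis. The input from part (i), the orthonormality of irreducible characters, is entirely standard and, in this paper, can be cited rather than reproved.
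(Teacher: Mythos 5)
Your proof is correct and follows the same standard route that the cited reference (Serre, \emph{Linear Representations of Finite Groups}, Theorem 6) takes for part (i), with part (ii) being the obvious dimension-count corollary; the paper itself does not prove the statement but merely quotes it. The one step worth flagging explicitly, which you do handle, is that $\CC$-linear independence of $\Lambda$ descends to $K$-linear independence because $K \subseteq \CC$, and combined with $\dim_K X(\Gamma, K) = g$ (via the indicator functions of conjugacy classes) this forces $\Lambda$ to be a $K$-basis.
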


Less well known but completely straightforward is the following lemma, for which we recall that $\QQ(\mu_d)$ is the cyclotomic field generated by the $d^{\rm th}$ roots of unity $\mu_d$,
and that $\Gal(\QQ(\mu_d)/\QQ) \simeq (\ZZ/d\ZZ)^\times$, with $a \in (\ZZ/d\ZZ)^\times$ corresponding to $\sigma_a \in \Gal(\QQ(\zeta_d)/\QQ)$ characterized by $\sigma_a(\zeta) = \zeta^a$ for any $\zeta \in \mu_d$.

\begin{Prop}\label{galoisprop}\leavevmode

\crefformat{proplabel}{#2#1#3}
\begin{enumerate}[itemsep = 2pt]
    \item\label[proplabel]{gal} If $K/\QQ$ is Galois, then $\Gal(K/\QQ)$ acts on $X(\Gamma, K)$ on the left by conjugating values: that is, for $\sigma \in \Gal(K/\QQ)$, $\chi \in X(\Gamma, K)$, and $g \in \Gamma$, set  $${}^\sigma\!\chi(g) := \sigma(\chi(g)).$$
    \item The action from \cref{gal} preserves $\Lambda$ and hence $\QQ \Lambda$. 
    \item \label[proplabel]{conj} The group $(\ZZ/d\ZZ)^\times$ acts on the set of conjugacy classes of $\Gamma$, with \mbox{$a \cdot [g] := [g^a]$}, inducing an action of $(\ZZ/d\ZZ)^\times$ on $X(\Gamma, K)$ for any field $K$ by setting \mbox{${}^a \chi(g) := \chi(g^a)$}.
     \item \label[proplabel]{Qmud} Every $\lambda$ in $\Lambda$ is $\QQ(\mu_d)$-valued, so that  $\Lambda$ is a $\QQ(\mu_d)$-basis of $X\big(\Gamma, \QQ(\mu_d)\big)$.
    \item \label[proplabel]{galconj}\label[proplabel]{QLam}
    For any $\lambda \in \QQ\Lambda$, the action from \cref{gal} coincides with the action from \cref{conj} under the identification $\Gal(\QQ(\mu_d)/\QQ) = (\ZZ/d\ZZ)^\times$: that is for $a \in (\ZZ/d\ZZ)^\times$ we have
    $${}^{\sigma_a} \lambda(g) = \lambda(g^a).$$
\end{enumerate}
\end{Prop}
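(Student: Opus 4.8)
The plan is to handle the five assertions in turn, keeping (v) for last among them since that is where the only real content sits. First I would dispose of the two purely formal statements. For~(i): \({}^\sigma\!\chi = \sigma\circ\chi\) is again constant on conjugacy classes because \(\chi\) is, takes values in \(K\) because \(\sigma(K)=K\) (as \(K/\QQ\) is Galois), and the identity \({}^{\sigma\tau}\!\chi(g)=\sigma(\tau(\chi(g)))={}^\sigma({}^\tau\!\chi)(g)\) shows \(\sigma\mapsto{}^\sigma(-)\) is a left action. For~(iii): since \(|g|\mid d\) the element \(g^a\) depends only on \(a\bmod d\), and \(h=xgx^{-1}\) forces \(h^a=xg^ax^{-1}\), so \([g]\mapsto[g^a]\) is a well-defined map on conjugacy classes; it is a left action of \((\ZZ/d\ZZ)^\times\) because \((g^a)^b=g^{ab}\) in any group and \(a\mapsto a^{-1}\bmod d\) supplies inverses. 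Consequently \({}^a\chi(g):=\chi(g^a)\) is a well-defined \(K\)-valued class function and \(\chi\mapsto{}^a\chi\) is the induced left action.

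For~(iv), fix for each \(i\) a representation \(\rho_i\) affording \(\lambda_i\). Every eigenvalue of \(\rho_i(g)\) is a root of unity of order dividing \(|g|\mid d\), hence \(\lambda_i(g)=\tr\rho_i(g)\in\ZZ[\mu_d]\subset\QQ(\mu_d)\), so \(\Lambda\subset X(\Gamma,\QQ(\mu_d))\); the theorem of Serre quoted above (part~(2)) then yields \(\QQ(\mu_d)\Lambda=X(\Gamma,\QQ(\mu_d))\), and \(\CC\)-linear independence of \(\Lambda\) gives \(\QQ(\mu_d)\)-linear independence, so \(\Lambda\) is a \(\QQ(\mu_d)\)-basis. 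Then I turn to the key step,~(v): for \(a\in(\ZZ/d\ZZ)^\times\) with associated \(\sigma_a\in\Gal(\QQ(\mu_d)/\QQ)\), list the eigenvalues (with multiplicity) of \(\rho_i(g)\) as \(\epsilon_1,\dots,\epsilon_{n_i}\); these are \(d\)-th roots of unity, so \(\sigma_a(\epsilon_k)=\epsilon_k^a\), whence
\begin{align*}
{}^{\sigma_a}\!\lambda_i(g) &= \sigma_a(\lambda_i(g)) = \sum_k\sigma_a(\epsilon_k) = \sum_k\epsilon_k^a \\
&= \tr(\rho_i(g)^a) = \tr\rho_i(g^a) = \lambda_i(g^a) = {}^a\lambda_i(g).
\end{align*}
Since \(\sigma\mapsto{}^\sigma(-)\) and \(a\mapsto{}^a(-)\) are both \(\QQ\)-linear, this identity extends from \(\Lambda\) to all of \(\QQ\Lambda\); for a general Galois \(K\supseteq\QQ(\mu_d)\) one restricts \(\sigma\) to \(\QQ(\mu_d)\). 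This establishes~(v).

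Finally, (ii) follows from~(v) once one knows that \(\sigma_a\circ\lambda_i\) is again \emph{irreducible}. The approach I would take: extend \(\sigma_a\) to an automorphism \(\widetilde\sigma\) of \(\CC\) (possible by general field theory) and apply it entrywise to \(\rho_i\); then \(\widetilde\sigma\circ\rho_i\colon\Gamma\to\GL_{n_i}(\CC)\) is a representation with character \(g\mapsto\widetilde\sigma(\lambda_i(g))=\sigma_a(\lambda_i(g))={}^a\lambda_i(g)\), and it is irreducible because any \(\Gamma\)-stable subspace would pull back through \(\widetilde\sigma^{-1}\) to one for \(\rho_i\). Hence \({}^a\lambda_i\in\Lambda\), so the \((\ZZ/d\ZZ)^\times\)-action --- equivalently, by~(v), the \(\Gal\)-action --- permutes \(\Lambda\) and therefore preserves \(\QQ\Lambda\). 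I expect this last point --- that a Galois conjugate of a complex representation is again a representation --- to be the only step needing genuine care; an alternative is to invoke Brauer's theorem that \(\rho_i\) is already realizable over \(\QQ(\mu_d)\). Beyond that, the main thing to watch is that the left/right conventions for the three actions (on conjugacy classes, on class functions, and by \(\Gal\)) stay consistent throughout.
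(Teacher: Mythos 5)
Your argument is correct and follows the same approach as the paper: the decisive observation for parts (iv) and (v) is that eigenvalues of \(\rho_i(g)\) are \(d\)-th roots of unity, so that \(\sigma_a\) acts on each eigenvalue by raising it to the \(a\)-th power, giving \(\sigma_a(\lambda_i(g)) = \tr\rho_i(g)^a = \lambda_i(g^a)\). The paper's proof is deliberately terse --- it states that ``most statements are straightforward'' and only spells out (iv) and (v) at the level of this one-line observation about eigenvalues, with a pointer to Gannon's expository note for the rest. You go considerably further, in particular giving a complete argument for (ii) (that Galois conjugation sends irreducible characters to irreducible characters), which the paper leaves entirely to the reader; your route via extending \(\sigma_a\) to an automorphism \(\widetilde\sigma\) of \(\CC\) and transporting invariant subspaces along the semilinear bijection \(\widetilde\sigma_*^{\pm 1}\) is the standard one, and your fallback of invoking Brauer's realizability theorem over \(\QQ(\mu_d)\) is a sound alternative that avoids the Axiom of Choice. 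One small cosmetic remark: your phrasing ``(ii) follows from (v)'' slightly overstates the logical dependence --- the irreducibility argument you give is really a direct proof that \(\sigma\circ\lambda_i\in\Lambda\), and (v) only serves to rephrase it in terms of the \((\ZZ/d\ZZ)^\times\)-action --- but the reasoning itself is sound and nothing essential is missing.
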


\begin{proof} Most statements are straightforward. For \cref{Qmud}, since every element of $\Gamma$ has order dividing $d$, it follows that every eigenvalue of every element in the image of any finite-dimensional representation of $\Gamma$ is a $d^{\rm}$ root of unity, so that traces are  $\QQ(\zeta_d)$-valued. Part~\cref{galconj} also follows by observing the effect of Galois conjugation on these eigenvalues, for example after diagonalizing. See also \cite{gannonGalois} for a discussion of these ideas.
\end{proof}

\crefformat{proplabel}{#2Proposition \ref*{galoisprop}#1#3}

We now prove a converse to \cref{QLam}. For an alternate exposition, see, for example, \cite[Theorem 29]{SerreRep}.
\begin{Prop}\label{keyrepprop} 
Let $\chi$ be in $X(\Gamma, \QQ)$, and assume that ${}^a \chi = \chi$ for any $a \in (\ZZ/d\ZZ)^\times$ under the action from \cref{conj}. Then $\chi \in \QQ\Lambda.$
\end{Prop}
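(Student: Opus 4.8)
The plan is to diagonalize $\chi$ against the basis $\Lambda$ over the cyclotomic field and then show the coordinates are actually rational by a Galois-descent argument. By \cref{Qmud}, $\Lambda$ is a $\QQ(\mu_d)$-basis of $X(\Gamma, \QQ(\mu_d))$, and since $\chi \in X(\Gamma, \QQ) \subset X(\Gamma, \QQ(\mu_d))$ we may write $\chi = \sum_{i=1}^g c_i\lambda_i$ with uniquely determined coefficients $c_i \in \QQ(\mu_d)$. It suffices to prove that $c_i \in \QQ$ for each $i$, for then $\chi \in \QQ\Lambda$ by definition.

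To that end, fix $a \in (\ZZ/d\ZZ)^\times$ and let $\sigma_a \in \Gal(\QQ(\mu_d)/\QQ)$ be the corresponding automorphism. I would compare two expressions for $\chi$ written in the family $\{{}^a\lambda_i\}_{i=1}^g$, which by \cref{conj} (the $(\ZZ/d\ZZ)^\times$-action merely permutes $\Lambda$) is again a $\QQ(\mu_d)$-basis of $X(\Gamma,\QQ(\mu_d))$. On one hand, because $\chi$ is $\QQ$-valued it is fixed by $\sigma_a$; applying $\sigma_a$ coordinate-wise to $\chi = \sum_i c_i\lambda_i$ and invoking \cref{galconj} to rewrite ${}^{\sigma_a}\lambda_i = {}^a\lambda_i$ gives $\chi = {}^{\sigma_a}\chi = \sum_i \sigma_a(c_i)\,{}^a\lambda_i$. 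On the other hand, the hypothesis ${}^a\chi = \chi$ reads, after expanding, $\chi = \sum_i c_i\,{}^a\lambda_i$. Subtracting and using linear independence of $\{{}^a\lambda_i\}$ forces $\sigma_a(c_i) = c_i$ for every $i$. Letting $a$ range over all of $(\ZZ/d\ZZ)^\times \cong \Gal(\QQ(\mu_d)/\QQ)$ shows each $c_i$ is fixed by the full Galois group, hence $c_i \in \QQ$, which completes the proof.

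There is no real obstacle here beyond careful bookkeeping: all the substance is already packaged into \cref{galoisprop}, which identifies the two a priori distinct operations on $\QQ\Lambda$ --- Galois conjugation of character values and precomposition with the power map $g \mapsto g^a$. The only point that needs a little care is to carry out the coefficient comparison in a single basis; expanding both the $\QQ$-rationality statement and the hypothesis in terms of $\{{}^a\lambda_i\}$ (rather than comparing coefficients across $\Lambda$ and a permutation of it) keeps the argument transparent and avoids having to track the permutation $\lambda_i \mapsto {}^a\lambda_i$ explicitly.
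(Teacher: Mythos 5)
Your proof is correct and follows essentially the same route as the paper's: expand $\chi$ in the $\QQ(\mu_d)$-basis $\Lambda$, compare the two expressions ${}^{\sigma_a}\chi$ and ${}^a\chi$ using \cref{galconj}, and conclude by linear independence that each coefficient is Galois-fixed, hence rational. The only (welcome) stylistic difference is that you explicitly flag that the comparison is happening in the permuted basis $\{{}^a\lambda_i\}$, a point the paper leaves implicit.
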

\begin{proof} Let $K = \QQ(\mu_d)$. Since $K\Lambda = X(\Gamma, K)$ by \cref{Qmud}, we can write
\[
    \chi = \sum k_i \lambda_i \,.
\]
Since $\chi$ is $\QQ$-valued, for any $\sigma \in \Gal(K/\QQ)$, we have, on one hand,  
\begin{equation}\label{one} \chi = {}^\sigma\! \chi = \sum_i \sigma(k_i)\, {}^\sigma\! \lambda_i.\end{equation}
On the other hand, for any $a \in (\ZZ/d\ZZ)^\times$ we have, by assumption, 
\begin{equation}\label{two}\chi = {}^a \! \chi = \sum_i k_i\, {}^a \! \lambda_i. \end{equation}

But any $\sigma \in \Gal(K/\QQ)$ is equal to $\sigma_a$ for some $a \in (\ZZ/d\ZZ)^\times.$ Equating \eqref{one} and \eqref{two} for this choice of $a$ and using \cref{galconj}, we obtain $\sum_i k_i\, {}^a\! \lambda_i = \sum_i \sigma_a(k_i)\, {}^a \!\lambda_i$. Since the $\lambda_i$ are linearly independent, we conclude that $\sigma_a(k_i) = k_i$ for each $i$ and for every $a \in (\ZZ/d\ZZ)^\times$, so that the $k_i$ are rational and $\chi \in \QQ\Lambda$, as claimed. 
\end{proof}

\begin{Prop}[Artin induction theorem]\label{keyrepprop2}
Under the same assumption on $\chi$ as in \cref{keyrepprop}, the class function $\chi$ is a $\QQ$-linear combination of inductions of characters of the trivial representation on cyclic subgroups.
\end{Prop}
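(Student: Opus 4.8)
The plan is to realise $\chi$ inside an explicit finite-dimensional $\QQ$-space and then exhibit a basis of that space consisting of inductions from cyclic subgroups. Write $Y\subseteq X(\Gamma,\QQ)$ for the set of $\QQ$-valued class functions $\chi'$ with ${}^a\chi'=\chi'$ for every $a\in(\ZZ/d\ZZ)^\times$, so that by hypothesis $\chi\in Y$ (here, as in \cref{keyrepprop}, the action is the one from \cref{conj}). First I would observe, using the Chinese Remainder Theorem to pass from residues coprime to $\ord(g)$ to residues coprime to $d$, that a class function $\chi'$ lies in $Y$ exactly when the value $\chi'(g)$ depends only on the $\Gamma$-conjugacy class of the cyclic subgroup $\langle g\rangle$. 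Consequently, if $C_1,\dots,C_c$ are representatives of the conjugacy classes of cyclic subgroups of $\Gamma$, with chosen generators $g_1,\dots,g_c$, the evaluation map $\chi'\mapsto(\chi'(g_j))_{j}$ is a $\QQ$-linear isomorphism $Y\xrightarrow{\ \sim\ }\QQ^c$; in particular $\dim_\QQ Y=c$.

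Next I would check that each permutation character $\eta_i:=\Ind_{C_i}^\Gamma\one$ lies in $Y$: it is $\ZZ$-valued, and from $\eta_i(g)=\#\{\,xC_i\in\Gamma/C_i:x^{-1}gx\in C_i\,\}$ together with the equivalence $x^{-1}gx\in C_i\iff x^{-1}\langle g\rangle x\subseteq C_i$, one sees that $\eta_i(g)$ depends only on the conjugacy class of $\langle g\rangle$. The heart of the argument is that $\eta_1,\dots,\eta_c$ are $\QQ$-linearly independent, which I would prove by a triangularity argument. Order the $C_i$ so that $|C_1|\le\cdots\le|C_c|$, and form the matrix $M=\big(\eta_i(g_j)\big)_{i,j}$. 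If $\eta_i(g_j)\ne0$ then $\langle g_j\rangle=C_j$ is $\Gamma$-conjugate to a subgroup of $C_i$, so $|C_j|$ divides $|C_i|$; hence $j$ weakly precedes $i$ in the ordering, and $|C_j|=|C_i|$ forces $C_j$ and $C_i$ to be conjugate, i.e.\ $i=j$. Moreover $\eta_i(g_i)\ge[\,N_\Gamma(C_i):C_i\,]\ge1$, since $x^{-1}g_ix\in C_i$ for every $x\in N_\Gamma(C_i)$. Thus, with respect to this ordering, $M$ is block lower-triangular whose diagonal blocks are positive diagonal matrices, so $\det M\ne0$.

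It follows that $\eta_1,\dots,\eta_c$ form a $\QQ$-basis of $Y$, so we may write $\chi=\sum_i q_i\eta_i$ with uniquely determined $q_i\in\CC$; evaluating at the $g_j$ yields the system $M^{\!\top}(q_i)_i=(\chi(g_j))_j$, whose coefficient matrix $M$ is integral and invertible and whose right-hand side is rational, whence $q_i\in\QQ$. This exhibits $\chi$ as a $\QQ$-linear combination of inductions of the trivial character from cyclic subgroups, as claimed. I expect the only delicate points to be bookkeeping: the CRT step that converts the hypothesis ${}^a\chi=\chi$ into the statement that $\chi$ is constant on conjugacy classes of generated cyclic subgroups, and the careful handling of cyclic subgroups of equal order in the triangularity step (within a fixed order, $M$ restricts to a positive diagonal matrix). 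One could alternatively invoke the classical Artin induction theorem directly (e.g.\ Serre \cite{SerreRep}) and note that the hypothesis on $\chi$ forces the resulting coefficients to be rational, but the argument above is essentially self-contained and I would prefer it here.
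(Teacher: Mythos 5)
Your proof is correct and takes essentially the same route as the paper's: you observe that the hypothesis makes $\chi$ constant on classes of generated-cyclic-subgroups (the paper's ``power-conjugacy''), form the matrix of values $\bigl(\Ind_{C_i}^\Gamma\one\bigr)(g_j)$ indexed by conjugacy classes of cyclic subgroups, and deduce invertibility from triangularity after ordering by subgroup size, which forces $\chi$ into the rational span of the induced trivial characters. One small plus of your writeup is that it addresses the tie-breaking among cyclic subgroups of equal order (the diagonal block is itself diagonal), a point the paper leaves implicit; conversely, beware that the paper's matrix $(b_{ij})$ (and yours, under the stated ordering) is lower-, not upper-, triangular --- a slip in the paper's wording that does not affect the argument.
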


Although Artin's theorem is well known, this particular formulation is not obviously available in the literature. Therefore we include a full proof below. See also \mbox{\cite[p.~23]{Bartel}} for a similar circle of ideas.

\begin{Lem} 
\label{ind}
Let $G$ be a finite group and $H$ a subgroup. For $g \in G$ the trace $\tr \Ind_H^G \one (g)$ is a nonnegative integer, nonzero if and only if $H$ contains a conjugate of $g$.
\end{Lem}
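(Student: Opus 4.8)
The plan is to identify $\Ind_H^G \one$ with the permutation representation of $G$ on the finite set $G/H$ of left cosets, and then read the trace off as a fixed-point count.

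First I would recall that, by construction, $\Ind_H^G \one$ carries a natural basis indexed by $G/H$, on which $g \in G$ acts by the permutation $xH \mapsto gxH$. Consequently its character at $g$ is
$\tr \Ind_H^G \one(g) = \#\{\, xH \in G/H : gxH = xH \,\}$,
the number of fixed points of this permutation of a finite set; in particular it is a nonnegative integer. (Equivalently, one may invoke Frobenius's formula $\tr \Ind_H^G \one(g) = \tfrac{1}{|H|} \#\{\, x \in G : x^{-1}gx \in H \,\}$, which also renders the integrality transparent; I would use whichever of the two descriptions the reader is expected to have at hand from the standard references already cited.)

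Next, the condition $gxH = xH$ is equivalent to $x^{-1}gx \in H$. Hence the count above is nonzero precisely when there exists $x \in G$ with $x^{-1}gx \in H$, which is to say precisely when $H$ contains a conjugate of $g$. This completes the argument.

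I do not expect any genuine obstacle: the only point that needs a touch of care is citing the correct model of the induced representation (the $G$-set $G/H$, or Frobenius's induced-character formula), after which both assertions are immediate. If a sharper statement were wanted, grouping the elements $x$ with $x^{-1}gx \in H$ by the value $x^{-1}gx$ gives $\tr \Ind_H^G \one(g) = \sum_{[h]} |C_G(g)| / |C_H(h)|$, the sum running over the $H$-conjugacy classes $[h]$ of elements of $H$ that are conjugate to $g$ in $G$; but this refinement is not needed for the lemma as stated.
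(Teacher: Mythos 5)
Your proof is correct and takes essentially the same route as the paper: both identify $\Ind_H^G \one$ with the permutation representation on $G/H$, read the character at $g$ as the number of fixed cosets, and note that $gxH = xH$ is equivalent to $x^{-1}gx \in H$. The Frobenius-formula aside and the refinement by $H$-conjugacy classes are harmless extras not needed for the lemma.
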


\begin{proof}[Proof of \cref{ind}]
The representation $\Ind_H^G \one$ is the permutation representation on the left cosets $G/H$ of $H$ in $G$, where $G$ acts on $G/H$ by left translation. Since the trace of a permutation representation is the number of fixed points, it follows that 
\begin{align*}
    \tr \Ind_H^G \one (g) & = \#\{xH \in G/H: gxH = xH\}\\ &= \frac{\#\{x \in G: x^{-1} g x \in H \}}{\#H}
    = \frac{\#\{x \in G: x^{-1} \langle g\rangle x \in H \}}{\#H}.
\end{align*}
Here the second equality follows from the observation that if $x$ is an element  for which~$x^{-1}$ conjugates $g$ into $H$, then the entire coset $xH$ has the same property. \end{proof}

\begin{proof}[Proof of \cref{keyrepprop2}]
Let $H_1, \ldots, H_t$ be the complete set of cyclic subgroups of $G$ up to conjugacy, ordered by increasing size. For each $i$, let $\phi_i:= \tr \Ind_{H_i}^G \one.$
Also consider the equivalence relation on $G$ given by $$g \sim h  \text{ iff $g$ and $h$ generate conjugate cyclic subgroups of $G$}.$$
In general this is a coarser relation than conjugacy; we will call it \emph{power-conjugacy}. 
 It is not difficult to see that $G$ has exactly $t$ power-conjugacy classes; for each $H_i$ choose a generator $h_i$; let $P_i$ be the characteristic function of the power-conjugacy class of~$h_i$. 

The assumptions on $\chi$ imply that $\chi$ is a $\QQ$-linear combination of the $P_i$. We show that each $P_i$ is a $\QQ$-linear combination of the $\phi_i$. Indeed, \cref{ind} already implies that (conversely) each $\phi_i$ is a $\ZZ$-linear combination of the $P_i$. More precisely, if we write 
$\phi_i = \sum_{j = 1}^t b_{ij} P_j,$
then $$b_{ij} = 
\frac{\#\{x \in G: x^{-1} H_j x \subseteq H_i\}}{\#H_i}
.$$
In particular, $b_{ij} \neq 0$ only if a conjugate of $H_j$ is contained in $H_i$, which, because of the ordering assumption on $H_1, \ldots, H_t$, can only happen if $j \leq i$. Furthermore, $b_{ii}$ is always nonzero. It follows that the $M_t(\ZZ)$-matrix $(b_{ij})_{i,j}$ is upper-triangular with nonzero entries on the diagonal, so that is it is in $\GL_t(\QQ)$. Therefore each $P_j$ is a $\QQ$-linear combination of the $\phi_i$, as required.

An equivalent formulation to what we have shown: a $\QQ$-valued class function is a $\QQ$-linear combination of the $\phi_i$ if and only if it is constant on power-conjugacy classes.
\end{proof}

\subsection{A matrix lemma} \label{lemma78section}

\begin{Lem} \label{tracezerolemma}
Let $\FF$ be a finite field,  $M \in \GL_2(\FF)$ an order-$d$ matrix, and $a$ an integer relatively prime to $d.$ If $\tr M = 0$, then $\tr M^a = 0$ as well.
\end{Lem}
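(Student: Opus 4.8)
The plan is to exploit the Cayley--Hamilton relation in dimension $2$, which reduces the statement to a parity observation about the order of $M$. Since $\tr M = 0$, Cayley--Hamilton gives $M^2 = (\tr M)M - (\det M)I = -(\det M)I$, so writing $\delta := \det M \in \FF^\times$ we get $M^{2j} = (-\delta)^j I$ and $M^{2j+1} = (-\delta)^j M$ for every $j \geq 0$. In particular $\tr M^k = (-\delta)^{(k-1)/2}\tr M = 0$ for every \emph{odd} $k$, so the entire content of the lemma is that the hypotheses force $a$ to be odd.

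To establish that, I would first dispose of the case in which $M$ is a scalar matrix $cI$: then $\tr M = 2c$, so $\tr M = 0$ together with $c \neq 0$ forces $\operatorname{char}\FF = 2$, and in that characteristic $\tr M^a = 2c^a = 0$ holds trivially. So assume $M$ is not scalar. Then no odd power of $M$ can equal the identity, since $M^{2j+1} = I$ would give $M = (-\delta)^{-j}I$, a scalar matrix; hence $d = \ord(M)$ is even. As $\gcd(a,d) = 1$, the integer $a$ is odd, and therefore $\tr M^a = 0$ by the formula above. (Equivalently, one can argue over $\overline{\FF}$: the eigenvalues of $M$ are $\alpha$ and $-\alpha$ with $\alpha \neq 0$, so $\tr M^d = \alpha^d(1+(-1)^d)$ must equal $\tr I = 2 \neq 0$ when $\operatorname{char}\FF\neq 2$, forcing $d$ even.)

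The only step needing a moment's care is characteristic $2$, where $\tr M = 0$ does not say ``the two eigenvalues are negatives of each other'' in the naive sense but rather that they coincide. However, this case is already subsumed above: scalar $M$ is handled directly, and for non-scalar $M$ the argument through $M^{2j+1} = (-\delta)^j M$ makes no reference to the characteristic. I do not anticipate a real obstacle; the lemma is short, and the one pitfall is overlooking that, precisely when $\tr M = 0$ can hold for a scalar matrix, one is automatically in characteristic $2$, where the conclusion is immediate anyway.
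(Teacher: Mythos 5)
Your proof is correct, and it rests on the same Cayley--Hamilton core as the paper's: from $\tr M = 0$ one gets $M^2 = -(\det M)I$, hence $\tr M^k = 0$ for all odd $k$, and the whole content is that $d$ must be even so that $\gcd(a,d)=1$ forces $a$ odd.

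The one genuine (if minor) difference is how you reach the conclusion that $d$ is even. The paper splits on the characteristic: in characteristic $2$ it notes $\tr M^0 = 2 = 0$ and propagates this through the trace recurrence to get $\tr M^k = 0$ for \emph{all} $k$ with no parity discussion at all; in odd characteristic it raises $M^2 = -(\det M)I$ to the $d$th power to obtain $(-1)^d = 1$, hence $d$ even. You instead split on whether $M$ is scalar: the scalar case only arises in characteristic $2$ (where it is immediate), and for non-scalar $M$ you observe that an odd power of $M$ being scalar would force $M$ itself to be scalar, so $d$ is even regardless of characteristic. Both routes are clean; your non-scalar argument has the slight advantage of being characteristic-free, while the paper's recurrence argument dispatches characteristic $2$ in one line without needing the scalar/non-scalar dichotomy. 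Either way the lemma follows, and your handling of the characteristic-$2$ edge case is careful and correct.
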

\begin{proof}
A $2 \times 2$ matrix $M$ satisfies the Cayley-Hamilton identity 
\begin{equation}\label{CH}M^2 - (\tr M) M + \det M = 0.\end{equation}
Multiplying by $M^k$ and taking traces gives us the recurrence satisfied by the sequence $\{\tr M^k\}_{k \geq 0}$: 
$$\tr M^{k + 2} = (\tr M) \tr M^{k+1} - (\det M) \tr M^k.$$

In particular, suppose $\tr M = 0$. Then \begin{equation}\label{rec}\tr M^{k+2} = - (\det M) \tr M^k \quad \text{for all $k \in \ZZ$}.\end{equation} If $\FF$ has characteristic 2, then since $\tr M^0 = \tr I_2 = 2 = 0$, we actually have $\tr M^k = 0$ for all $k \in \ZZ$, so that there is nothing to show. Otherwise, $\FF$ has odd characteristic, and~\eqref{rec} implies that $\tr M^k = 0$ if and only if $k$ is odd. It therefore suffices to show that the multiplicative order $d$ of trace-zero matrix $M$ is even. Indeed, Cayley-Hamilton \eqref{CH} again gives us $M^2 = -
\det M$. Raising this to the $d^{\rm th}$ power gives us $$1 = M^{2d} = (-1)^d (\det M)^d = (-1)^d.$$
Since $1 \neq -1$, we must have $d$ even as claimed.
\end{proof}

\appendix

\section*{Appendix: \texttt{Pari/GP} code and usage instructions}
\renewcommand{\thesection}{A}
\label{app:code}

All code and code exerts in this appendix were run with \texttt{Pari/GP} (version 2.15.4), cf.\,\cite{PARI2}.

\subsection{Implementation of the Ihara bounds for \texorpdfstring{\( \gamma_K \)}{gamma\_K}}
\label{app:iharacode}

The following code implements the calculation of Ihara's bounds.  The Dedekind-Kummer theorem is used in \mbox{\texttt{phiK(x, nf)}} to compute the factorization of \( p \mathcal{O}_K \) into prime ideals of \( K \), outside of primes dividing the discriminant of the defining polynomial (i.e. not dividing \( \operatorname{disc} \Z[\alpha] \)).  For primes dividing the discriminant of the defining polynomial, we rely on \texttt{Pari/GP}'s routine \texttt{idealprimedec} to correctly find the complete decomposition (although we only need the ramification index and residual index of the results).

\begin{lstlisting}[language=gp,belowcaptionskip=\medskipamount,caption={{Implementation of Ihara's bounds from \cref{prop:ihara}\llap{\raisebox{-1em}{\phantom{X}}}}}]
iharaBounds(x, nf) = {
  rK = 1/2 * log(abs(nf.disc)) - ( nf.r1/2 * (Euler + log(4*Pi))
         + nf.r2 * (Euler + log(2*Pi)) );

  lres = iharaEll(x, nf);
  phires = phiK(x, nf);
  sx = sqrt(x); lx = log(x);

  \\ [lower, upper]  
  [ (sx-1)/(sx+1) * (lx - phires + lres) - 1 - 2/(sx+1)*rK,
    (sx+1)/(sx-1) * (lx - phires + lres) - 1 + 2/(sx-1)*rK ]
};

iharaEll(x, nf) = {
  nf.r1 * ( log((x+1)/(x-1)) + 2/(x-1) * log((x+1)/2) ) 
   + nf.r2 * ( log(x/(x-1)) + 1/(x-1) * log(x) )
};

phiK(x, nf) = {
  nfpol = nf.pol; disc = poldisc(nf.pol);
  lx = log(x); 
  ttl = 0;

  \\ if p divides polynomial discriminant, compute factorisation
  \\ explicitly otherwise, use Dedekind-Kummer theorem
  forprime(p=2, x, 
    P = if(disc % p == 0, 
      apply(i->i.f, idealprimedec(nf, p)),    
      apply(i->poldegree(i[1]), Vec(factor(Mod(1, p)*nfpol)~ ))
     );
      
    lp = log(p);
    for(j=1, #P, 
      ub = lx/lp/P[j];
      ttl += sum(k=1, ub, (x/p^(k*P[j]) - 1) * P[j]) * lp;
    )
  );
  
  ttl * 1/(x-1)
};
\end{lstlisting}

\subsection{Usage instructions}\label{app:usage}
To define the number field \( E = \Q(\alpha) \) in \texttt{Pari/GP}, which is obtained by adjoining a root of \( \alpha \) of some irreducible  polynomial\footnote{This is the \( \mathfrak{S}_3 \)-fixed field of the degree 24 field appearing in \cref{Haberland59}} \( f_E(X) = X^4 + X^3 - 7X^2 - 11 X + 3 \), pass the defining polynomial \texttt{pol} to the routine \texttt{nfinit(pol)}.  One can then compute Ihara's bounds \eqref{eqn:ihara} for \( E \) at \( x = 10^8 \) by calling \texttt{iharaBounds(x, nf)} with these parameters, as follows:
\begin{lstlisting}[language=gpin]
<@\inpr@> nfE = nfinit(X^4 + X^3 - 7*X^2 - 11*X + 3);
<@\inpr@> iharaBounds(10^8, nfE)
\end{lstlisting}
\begin{lstlisting}[language=gpout]
<@\outpr@> [0.83356048096571271277339561065017380005, 0.83453128871901233112044719586629487940]
\end{lstlisting}
This means that \( \gamma_E \) is contained in the following interval 
\[
    \gamma_E \in (0.83356048\ldots, 0.83453128\ldots) \,,
\]
so that \( \gamma_E = 0.83\ldots \), accurate to 2 decimal places.

\subsection{Higher precision, using \texorpdfstring{$L$}{L}-function heuristics}
\label{app:higherprec}

An approach by Dokchitser \cite{Dokchitser} gives a way to numerically evaluate $L$-functions and their derivatives in a practical but \emph{non-rigorous} and \emph{uncertified} way.  Dokchitser notes \cite[Remarks 5.1--5.4]{Dokchitser} that in order to obtain rigorous algorithm, one needs to resolve various issues involving sufficient working precision, determining truncation points of infinite series and asymptotic expansions, and solving questions around the convergence of certain continued fraction expansions.  Nevertheless, the heuristics for this approach are implemented in \texttt{Pari/GP}, allowing one to obtain \emph{uncertified}\footnote{Again, see \textbf{Important Caveat} in the documentation on\newline \url{https://pari.math.u-bordeaux.fr/dochtml/html-stable/_L_minusfunctions.html}} numerical results to high precision.  For a number field \( E \), one can read \( \gamma_E \) off as the constant coefficient in the Laurent series of \( \zeta_E'/\zeta_E(s) \) around \( s = 1 \), as follows:
\begin{lstlisting}[language=gpin]
<@\inpr@> nfE = nfinit(X^4 + X^3 - 7*X^2 - 11*X + 3);
<@\inpr@> zetaE = lfuncreate(nfE); 
<@\inpr@> polcoef(lfun(zetaE, 1+s, 1)/lfun(zetaE, 1+s),0)
\end{lstlisting}
\begin{lstlisting}[language=gpout]
<@\outpr@> 0.83393401339070668873643457029774979792
\end{lstlisting}
Assuming that \texttt{Pari/GP} has correctly evaluated these $L$-functions, we expect that
\[
    \gamma_E \overset{?}{=} 0.8339340133907066887364345\ldots \,,
\]
with significantly higher precision that the value obtained with \texttt{iharaBounds(x, nf)}.  This result is \emph{uncertified}, but does indeed agree with \( \gamma_K = 0.83\ldots \) as obtained above using Ihara's bounds \eqref{eqn:ihara}.

\subsection{Regulators}
\label{app:reg}
To compute the regulator of a number field one can use the \texttt{bnfinit} routine from \texttt{Pari/GP} to define a (Buchmann's) number field, a data structure which contains the resulting number field, and deeper invariants thereof (such as the unit group and the class group).  The regulator is then available via \texttt{bnf.reg}.  For the field \( E \) with defining polynomial \( f_E(X) = X^4 + X^3 - 7X^2 - 11X + 3 \), this is done as follows.
\begin{lstlisting}[language=gpin]
<@\inpr@> bnfE = bnfinit(X^4 + X^3 - 7*X^2 - 11*X + 3);
<@\inpr@> bnfE.reg
\end{lstlisting}
\begin{lstlisting}[language=gpout]
<@\outpr@> 20.522585993622250450776428334525332118
\end{lstlisting}
By default the result of \texttt{bnfinit} relies on GRH for this number field, in order to apply Buchmann's sub-exponential algorithm to compute the idea class group, fundamental units, and regulator.  One can determine whether the GRH assumption can be removed via \texttt{bnfcertify(bnf)} (with result 1 if and only if GRH can be removed).  If the GRH assumption can be removed one hence obtains \emph{certified} results for the class group, the fundamental units and in particular the regulator.
\begin{lstlisting}[language=gpin]
<@\inpr@> bnfE = bnfinit(X^4 + X^3 - 7*X^2 - 11*X + 3);
<@\inpr@> bnfcertify(bnfE)
\end{lstlisting}
\begin{lstlisting}[language=gpout]
<@\outpr@> 1
\end{lstlisting}

For the number fields in \cref{Haberland59}, the \texttt{bnfcertify} verification on \( E \) and \( M \) is nearly instantenous, while on \( L \), the degree 24 splitting field of \( E \), whose smallest defining polynomial can be computed via \texttt{polredabs(nfsplitting(pol))}, this verification takes approximately 1 hour.
\begin{lstlisting}[language=gpin]
<@\inpr@> bnfL = bnfinit(polredabs(nfsplitting(X^4 + X^3 - 7*X^2 - 11*X + 3)));
<@\inpr@> bnfcertify(bnfL)
\end{lstlisting}
\begin{lstlisting}[language=gpout,belowskip=-5pt]
<@\outpr@> 1
\end{lstlisting}
\begin{lstlisting}[language=gpin]
<@\inpr@> ## \\ displays runtime of the last command
\end{lstlisting}
\begin{lstlisting}[language=gpoutmsg]
  ***   last result: cpu time <@\color{olive}55min, 18,276 ms@>, real time <@\color{olive}55min, 18,322 ms@>.

\end{lstlisting}

\parskip=0pt           

\end{document}